\newcommand\J{\mathscr J}
\def\A{\mathscr{A}}
\def\energy{\mathcal{E}}
\def\be{\begin{equation}}
\def\ee{\end{equation}}
\def\bea{\begin{eqnarray}}
\def\eea{\end{eqnarray}}
\def\beas{\begin{eqnarray*}}
\def\eeas{\end{eqnarray*}}
\def\g{\partial}
\def\pa{\partial }
\def\der{\pa_r^a\slashed\nabla^\beta}
\def\car{\pa^\nu}
\def\sn{\slashed\nabla}
\def\norm{\mathcal S^N}
\def\vortnorm{\mathcal B^N}
\def\energy{\mathcal E^N}
\newtheorem{theorem}{Theorem}[section]
\newtheorem{proposition}[theorem]{Proposition}
\newtheorem{lemma}[theorem]{Lemma}
\newtheorem{remark}[theorem]{Remark}
\renewcommand{\theequation}{\arabic{section}.\arabic{equation}}
\title{Expanding large global solutions of the equations of compressible fluid mechanics}
\author{Mahir Had\v zi\'c\thanks{Department of Mathematics, King's College London, London, WC2S 2LR, UK. Email: mahir.hadzic@kcl.ac.uk.} \ and Juhi Jang\thanks{Department of Mathematics, University of Southern California, Los Angeles, CA 90089, USA and Korea Institute for Advanced Study, Seoul, Korea.  Email: juhijang@usc.edu.}}
\date{}
\begin{document}

\maketitle

\abstract{
Inspired by a recent work of Sideris on affine motions of compactly supported moving ellipsoids, we construct global-in-time solutions to the vacuum free boundary  three-dimensional isentropic compressible Euler equations when $\gamma\in(1,\frac53]$ for initial configurations that are sufficiently close to the affine motions, and satisfy the physical vacuum boundary condition. The support of these solutions expands at a linear rate in time, they remain smooth in the interior of their support, and no shocks are formed in the evolution. We impose no symmetry assumptions on our initial data.
We prove the existence of such solutions by reformulating the problem as a nonlinear stability question in suitably rescaled variables, wherein the stabilizing effect of the fluid expansion becomes visible in the range $\gamma\in(1,\frac53]$.
%In a recent work Sideris constructed a finite-parameter family of compactly supported affine solutions to the three-dimensional isentropic compressible Euler equations 
% satisfying the physical vacuum condition.  The support of these solutions expands at a linear rate in time. 
% We show that if the adiabatic exponent $\gamma$ belongs to the interval $(1,\frac53]$, then these affine motions are nonlinearly stable without any symmetry assumptions on the initial data. 
% Small perturbations lead to globally-in-time defined solutions that remain in the vicinity of the manifold of affine motions, they remain smooth in the interior of their support, and no shocks are formed in the process.
%
%Our strategy relies on two key ingredients. 
%We first provide a new interpretation of the affine motions using an (almost) invariant 
%action of GL$^+(3)$ on the compressible Euler system. This transformation dictates a particular rescaling of time and a change of variables, which in turn exposes a stabilization mechanism
% induced by the expansion of the background affine motions when $\gamma\in(1,\frac53]$. We then switch to a Lagrangian description of the rescaled Euler system which reflects the geometry of expanding affine motions.  
% We introduce new ideas with respect to the existing well-posedness frameworks and build high-order energy spaces to  prove global-in-time stability, thereby making crucial use of the new stabilization effect.
}

\section{Introduction}
 
We consider the dynamics of moving gases in three dimensions as described by compressible isentropic Euler system. We are interested in fluids surrounded by vacuum and therefore the unknowns are the density 
$\rho$, fluid velocity vector-field ${\bf u}$, and the free boundary of the support of $\rho$, denoted by $\pa\Omega(t)$. The resulting initial value problem takes the form:
\begin{subequations}
\label{E:EULER}
\begin{alignat}{2}
\g_t\rho + \text{div}\, (\rho \mathbf{u})& = 0 &&\ \text{ in } \ \Omega(t)\,;\label{E:CONTINUITYE}\\
\rho\left(\g_t  \mathbf{u}+ ( \mathbf{u}\cdot\nabla) \mathbf{u}\right) +\nabla p &= 0 &&\ \text{ in } \ \Omega(t)\,;\label{E:VELOCITYE}\\
p&=0&& \ \text{ on } \ \partial\Omega(t)\,; \label{E:VACUUME} \\
\mathcal{V} (\pa\Omega(t))&= \mathbf{u}\cdot \mathbf{n}(t)  && \ \text{ on } \ \partial\Omega(t)\,;\label{E:VELOCITYBDRYE}\\
(\rho(0,\cdot),  \mathbf{u}(0,\cdot))=(\rho_0,  \mathbf{u}_0)\,, & \ \Omega (0)=\Omega _0&&\,.\label{E:INITIALE}
\end{alignat}
\end{subequations}
Here $\mathcal{V} (\partial\Omega(t))$ denotes the normal velocity of $\pa\Omega(t)$ and $\mathbf{n}(t) $ denotes the outward unit normal vector to $\pa\Omega(t)$. 
Equation~\eqref{E:CONTINUITYE} is the well-known continuity equation, while~\eqref{E:VELOCITYE} expresses the conservation of momentum.
Boundary condition~\eqref{E:VACUUME} is the vacuum boundary condition, while~\eqref{E:VELOCITYBDRYE} is the kinematic boundary condition stating that the boundary
movement is tangential to the fluid particles.

In this article, we shall only consider ideal barotropic fluids, where the pressure depends only on the density, expressed through the following equation of state
\be\label{E:EQUATIONOFSTATE}
p = \rho^{\gamma}, \ \ \gamma>1,
\ee
where we have set the entropy constant to be 1. We additionally demand that the initial density satisfies the {\em physical vacuum boundary condition} \cite{JM1,LY2}:
\begin{align}\label{E:PHYSICALVACUUM}
-\infty < \frac{\pa\, c_s^2}{\pa n}\Big|_{\pa\Omega(0)}<0
\end{align}
where $c_s=\sqrt{\frac{d}{d\rho}p(\rho)}$ is the speed of the sound. 
We shall refer to the system of equations~\eqref{E:EULER}--\eqref{E:PHYSICALVACUUM} as the {\em Euler system} and denote it by E$_\gamma$.

Due to the inherent lack of smoothness of the enthalpy $c_s^2$ at the vacuum boundary (implied by the assumption~\eqref{E:PHYSICALVACUUM}), a rigorous understanding of 
the existence of physical vacuum states in compressible fluid dynamics has been a challenging problem. Only recently, a successful local-in-time well-posedness theory for the E$_\gamma$ system 
was developed in~\cite{CoSh2012, JaMa2015} using the Lagrangian formulation of the Euler system in the vacuum free boundary framework. The fundamental unknown is the flow map $\zeta$ defined as a solution of the ordinary differential equations (ODE)
\[
\pa_t\zeta = {\bf u}\circ \zeta.
\]

Very recently Sideris~\cite{Sideris} constructed a special class of {\em affine} fluid motions\footnote{Affine motions are often used to understand a qualitative behavior in fluid mechanics, for instance see Majda \cite{Majda86}.} that solve~\eqref{E:EULER}--\eqref{E:PHYSICALVACUUM} globally-in-time  
in the vacuum free boundary setting. 
The flow map and the velocity field of an affine solution by definition take the form 
\begin{align}\label{E:affine}
\zeta_A(t,x) = A(t)x, \ \ {\bf u}(t,x)=\dot A(t)A^{-1}(t)x, \ \ A(t)\in\text{GL}^+(3).
\end{align}
Plugging this ansatz into~\eqref{E:EULER} one can effectively separate variables and discover a family of evolving vacuum states of the form
\begin{align}\label{E:RHOAUA}
\rho_A(t,x) = \det A(t)^{-1} \left[\frac{(\gamma-1)}{2\gamma}(1-|A^{-1}(t)x|^2)\right]^{\frac{1}{\gamma-1}}, \ \ {\bf u}_A(t,x) = \dot{A}(t)A^{-1}(t)x, 
\end{align}
whereby the flow matrix $t\to A(t)$ solves the following Cauchy problem for a system of ODEs:
\begin{align}
\ddot A(t) & = \det A(t)^{1-\gamma}A(t)^{-\top}, \label{E:AEQUATION}\\ 
(A(0),\dot A(0)) & = (A_0,A_1) \in \text{GL}^+(3)\times \mathbb M^{3\times3}.\label{E:AEQUATIONINITIAL}
\end{align} 
The density and the velocity field are both supported on a moving ellipse $\Omega(t)$ of the form $A(t)\Omega$,
where $\Omega=B_1({\bf 0})$ is the unit ball in $\mathbb R^3$. We denote the set of such affine motions by $\mathscr S$.
Our main theorem states that the elements of $\mathscr S$ are stable under small perturbations if $\gamma\in(1,\frac53]$.

\begin{center}
\begin{tikzpicture}[>=stealth]
\def\wx{3}      % Width of outer ellipse.
\def\wy{1.5}    % Height of outer ellipse.
\def\wz{0.75}   % Height of inner ellipse.

% Draw outer ellipse.
\draw (0,0) ellipse (\wx cm and \wy cm);

% Draw top half of inner ellipse.
\begin{scope}
  \clip (-\wx-0.1, 0) rectangle (\wx+0.1, \wy);
  \draw[dashed] (0, 0) ellipse (\wx cm and \wz cm);
\end{scope}

% Draw bottom half of inner ellipse.
\begin{scope}
  \clip (-\wx-0.1, 0) rectangle (\wx+0.1, -\wy);
  \draw (0, 0) ellipse (\wx cm and \wz cm);
\end{scope}

% Draw text.
\node at (0, 0) {$\Omega(t) = A(t) \, \Omega$};
\node at (0,-3) {{\em Elements of $\mathscr S$}};

% Draw arrows.
\def\r{1.2}    % Radius multiplier for start of arrows.
\def\s{1.5}    % Radius multiplier for end of arrows.
\foreach \x in {45, 90, ..., 360} {
  \draw[-{Latex[width=2mm]}] ({sin(\x)*\wx*\r}, {cos(\x)*\wy*\r}) -- ({sin(\x)*\wx*\s}, {cos(\x)*\wy*\s});
}
\end{tikzpicture}
\end{center}

\begin{theorem}[Global existence in the vicinity of affine motions]\label{T:MAINTHEOREM}
Assume that $\gamma\in(1,\frac53]$. 
Then small perturbations 
of the expanding affine motions given by \eqref{E:RHOAUA}--\eqref{E:AEQUATIONINITIAL}  give rise to unique globally-in-time defined solutions of the Euler system E$_\gamma$. 
Moreover, their support expands at a linear rate and they remain close to the underlying ``moduli" space $\mathscr S$ of affine motions.
\end{theorem}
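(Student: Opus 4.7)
The plan is to fix a reference affine motion $\zeta_A \in \mathscr S$ and reduce the free boundary problem to a fixed-domain problem via the Lagrangian flow map $\zeta$ on the reference ball $\Omega = B_1(\mathbf 0)$. I would write $\zeta(t,x) = A(t)\eta(t,x)$ with $\eta(t,x) = x + \xi(t,x)$, so that $\xi$ encodes the deviation from the affine motion. Substituting into the Lagrangian form of the momentum equation \eqref{E:VELOCITYE} and using the explicit density profile \eqref{E:RHOAUA} yields a quasilinear degenerate wave equation for $\xi$ on the fixed domain $\Omega$; its coefficients depend on $A(t)$ and $\dot A(t)$, and the equation carries the degenerate weight $w(x)\sim 1-|x|^2$ coming from the vanishing enthalpy at $\pa\Omega$, consistently with the physical vacuum condition \eqref{E:PHYSICALVACUUM}.

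The next step is to rescale time so as to expose the stabilizing effect of the expansion. Because $|A(t)|\sim t$ asymptotically, an appropriate (essentially logarithmic) change of variable $\tau = \tau(t)$ renders the leading-order equation autonomous and of damped-wave type,
\begin{equation*}
\pa_\tau^2 \xi + \alpha(\gamma)\,\pa_\tau \xi + \mathcal L \xi = \mathcal N(\xi, D\xi, \pa_\tau \xi),
\end{equation*}
where $\mathcal L$ is a symmetric positive degenerate elliptic operator (vanishing at $\pa\Omega$), $\mathcal N$ collects the quadratic and higher nonlinearities, and $\alpha(\gamma)$ is a damping coefficient that is strictly positive precisely when $\gamma \leq \tfrac{5}{3}$. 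This numerical coincidence is the source of the $\gamma$-range in the theorem: the expansion acts as genuine friction only up to this threshold.

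The core of the proof is then a high-order energy estimate tailored to the degenerate weight $w$, in the spirit of the physical vacuum framework of \cite{CoSh2012, JaMa2015}. I would build an energy functional $\energy(\tau)$ from weighted combinations of radial derivatives $\pa_r$ and tangential derivatives $\sn$, with $w$-powers tuned to the order of normal differentiation, and a matching dissipation $\dissipation(\tau)$ produced by the damping term $\alpha(\gamma)\pa_\tau\xi$. The goal is to close an inequality of the form
\begin{equation*}
\frac{d}{d\tau}\energy(\tau) + \dissipation(\tau) \lesssim \sqrt{\energy(\tau)}\,\dissipation(\tau) + \text{(forcing integrable in }\tau\text{)},
\end{equation*}
from which a standard continuity argument yields uniform smallness $\energy(\tau)\leq 2\energy(0)$ for all $\tau\geq 0$; combined with the local well-posedness theory in the vacuum setting, this gives the global solution and, after undoing the rescaling, the linear-in-$t$ expansion of the support and closeness to $\mathscr S$ in the original variables.

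The main obstacle, I expect, is closing this energy inequality uniformly in $\tau$. Several difficulties compound: the degeneracy of $w$ at $\pa\Omega$ forces intricate weighted commutator identities between $\pa_r$, $\sn$, and multiplication by $w$; the coefficients inherited from $A(\tau)$ are genuinely nonautonomous and do not necessarily converge to constants, so the linear part is time-dependent at subleading order; and closing the estimates uniformly up to the endpoint $\gamma = \tfrac{5}{3}$ is delicate because the dissipation only barely dominates the relevant nonlinear terms there. A modulation of the reference affine motion within $\mathscr S$ may be needed to project out the neutral directions along the moduli space and prevent drift, which is also what explains the conclusion being stated as closeness to $\mathscr S$ rather than to a fixed $\zeta_A$.
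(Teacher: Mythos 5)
Your strategy matches the paper's at the structural level: write $\zeta(t,x)=A(t)\eta(t,x)$ with $\eta=x+\xi$ on the fixed ball, pass to a logarithmic time $\tau$, obtain a damped wave equation with degenerate weight $w\sim 1-|x|^2$, run a weighted high-order energy scheme in the physical-vacuum framework, and close by continuity. This is exactly the derivation the paper gives in Appendix~\ref{A:LAG}, equivalent to the Eulerian self-similar rescaling of Section~\ref{S:SS}, and your identification of $\gamma\le\tfrac53$ with nonnegativity of the friction coefficient $\tfrac{5-3\gamma}2$ is the same as Proposition~\ref{P:ENERGYESTIMATE1}.

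There is, however, a genuine gap. Describing the linearized operator as ``a symmetric positive degenerate elliptic operator $\mathcal L$'' skips over the central technical obstacle. After conjugation by $A(t)$, the pressure gradient becomes $\Lambda\A^\top\nabla\big(w\J^{1-\gamma}\big)$ with $\Lambda=\det A^{2/3}A^{-1}A^{-\top}$, a nontrivial symmetric positive-definite matrix whenever $A$ is not conformal. This twists the structure in two ways that your proposal does not address. First, $\text{curl}$ does not annihilate $\Lambda\A^\top\nabla$, so the vorticity no longer obeys a simple transport equation and the standard curl-split of~\cite{CoSh2012,JaMa2015} fails; the paper fixes this by introducing the adapted operator $\text{curl}_{\Lambda\A}$, which does annihilate the $\Lambda$-gradient, and by proving separate decoupled estimates for it (Section~\ref{S:VORTICITY}). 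Second, the top-order quadratic form coming from the pressure term has the mixed shape $\Lambda_{\ell j}[\nabla_\eta\pa\uptheta]^i_j\Lambda^{-1}_{im}\pa_\tau[\nabla_\eta\pa\uptheta]^m_\ell$, which is not manifestly a $\tau$-derivative of a positive-definite quantity; making it one requires the diagonalization identity of Lemma~\ref{L:KEYLEMMA}, which conjugates by the orthogonal diagonalizer of $\Lambda$ and carefully tracks the coefficients $d_id_j^{-1}$. Without both ingredients your energy inequality would not close.

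A second, smaller issue: the suggestion that a modulation along $\mathscr S$ ``may be needed to project out the neutral directions'' is a misdirection. The paper fixes the reference affine motion once and for all and proves that $\uptheta$ converges to a $\tau$-independent limit $\uptheta_\infty$; the exponential-in-$\tau$ weights $\mu^{3\gamma-3}$ make the velocity decay and the forcing integrable, so there is no drift along the moduli space to cancel. The conclusion of Theorem~\ref{T:MAINTHEOREM} is stated as closeness to $\mathscr S$ only because the fixed reference element belongs to $\mathscr S$, not because the reference is dynamically adjusted.
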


\begin{remark}
A precise statement of Theorem~\ref{T:MAINTHEOREM} specifying the function spaces and the notions of ``small" and ``close" in the statement above is provided in Theorem~\ref{T:MAINLAGR}.
\end{remark}

\begin{remark}
Theorem~\ref{T:MAINTHEOREM} covers a range of adiabatic exponents of physical importance. The exponent $\gamma=\frac53$ is commonly used in the description of a monatomic gas, $\gamma=\frac75$ corresponds to a diatomic gas, and $\gamma=\frac43$ is often referred to as the radiative case.
\end{remark}

\begin{remark}\label{R:RESTRICTIONS} 
As shown in~\cite{Sideris}, the affine motions exist even when $\gamma>\frac53$. 
It would be interesting 
to understand whether one can go beyond the $\gamma=\frac53$ threshold in our theorem.  The assumption $\gamma\le\frac53$ is certainly optimal for our method, as the case $\gamma>\frac53$
yields an anti-damping effect that can in principle lead to an instability.
\end{remark}

\begin{remark}
The theorem shows that in the forward time direction, in the vicinity of $\mathscr S$ the solution naturally splits into an affine component (i.e. an element of $\mathscr S$) and a remainder, which as we shall show, remains small due to the underlying expansion associated with the elements of $\mathscr S$.
\end{remark}

To the best of our knowledge, Theorem \ref{T:MAINTHEOREM} is the first global-in-time existence result for {\em non-affine} expanding solutions to the Euler system E$_\gamma$. These solutions are unique in the vacuum free boundary framework, and they are simultaneously weak solutions in $\mathbb R^3$ obtained by extending $\rho$ to 0 in the vacuum region. In fact,  
Theorem~\ref{T:MAINTHEOREM} is consistent with another result of Sideris~\cite{Sideris2014}: {\em if the Euler system E$_\gamma$ admits global-in-time compactly supported solutions then the diameter of the support has to grow at least linearly in time. In other words, $\sup_{x\in \Omega(t)}|x| \gtrsim  t$.}   
The existence of such expanding global-in-time solutions has been shown only recently~\cite{Sideris} in the context of a finite parameter family of affine motions $\mathscr S$ introduced above.  One important novelty of our result is the introduction of a rigorous mathematical framework that allows us to describe the nonlinear stable expansion phenomenon of the Euler flows for all time around the set $\mathscr S$.  Theorem~\ref{T:MAINTHEOREM} provides an evidence that the expansion of the gas at a linear rate is a mechanism that prevents the development of shock singularities\footnote{In the absence of free boundaries, gas expansion also plays an important role in the global results in~\cite{Se1997,Grassin98}. The full nonlinear analysis of stabilizing effects of the fluid expansion in the context of general relativistic cosmological models was initiated by Rodnianski \& Speck~\cite{iRjS2012} and extended further in~\cite{jS2012,cLjVK,HaSp,Ol,Fr}}.

\subsection{Existence theories for compressible Euler flows}

We briefly review some existence theories of compressible  flows, where we focus on the works most relevant to our problem. We will not attempt to provide an exhaustive overview, but for more references we refer the reader to~\cite{Daf}. 

It is well-known~\cite{Daf} that the system \eqref{E:EULER} is strictly hyperbolic if the density is bounded below away from zero.   
In the absence of vacuum, the theory of symmetric
hyperbolic systems developed by Friedrichs-Lax-Kato applies and one can construct local-in-time smooth solutions, see for instance Majda's book~\cite{Majda84}. 
These smooth solutions break down in a finite time for generic initial data, although global-in-time classical solutions can exist for special initial data with $c_s$ sufficiently smooth and small, see for instance Serre~\cite{Se1997} and Grassin \cite{Grassin98}. In particular, in two space dimensions there exist initial data leading to so-called eternal solutions for all $t\in\mathbb R$~\cite{Se,Grassin98}.
The first proof of formation of singularities for classical Euler flow with a nonzero constant density at infinity was given by Sideris~\cite{Sideris2}. A similar result was obtained by Makino-Ukai-Kawashima \cite{MUK86} for smooth compactly supported disturbance moving into vacuum.  
In an important work~\cite{Chr2007}, Christodoulou gave a precise description of shock formation for irrotational relativistic fluids starting with small smooth initial data. 
We refer to the work by Luk-Speck~\cite{jS2016} for more recent developments in this direction. We remark that the above results on singularity formation do not apply to the physical vacuum free boundary problem. 
In fact Liu-Smoller \cite{LS} showed that the shock waves vanish at the vacuum and the singular 
behavior is reminiscent of the centered rarefaction waves, which suggests that the vacuum has a regularizing effect in that setting~\cite{LY2}. 

In the context of weak solutions it has been  known since the work of DiPerna~\cite{DiPerna1983} and subsequently Chen~\cite{Chen1997}, Lions-Perthame-Souganidis~\cite{LiPeSo}
that the one-dimensional isentropic Euler system allows for a globally defined notion of a weak solution. We mention that this theory requires the initial density $\rho_0$ to be strictly positive and supported uniformly away from zero, although a formation of vacuum regions is not necessarily dynamically precluded.
We note that for multi-dimensional flows there is no entropy criterion that has so far been imposed so that the uniqueness of solutions is ensured and this question is still open. 
Recent results of Chiodaroli-DeLellis-Kreml~\cite{ChDeKr} show that the two-dimensional compressible Euler system is in fact strongly
ill-posed with respect to a commonly used notion of an admissible weak solution. 

In the framework of the vacuum free boundary, the existence theory depends strongly on the behavior of initial data (containing vacuum). 
When the sound speed $c_s$ is smooth across the vacuum boundary, 
Liu-Yang~\cite{LY1} constructed local-in-time solutions to one-dimensional Euler system with
damping and showed that $c_s^2$ cannot be smooth across the vacuum after a finite time. 
Our physical vacuum assumption~\eqref{E:PHYSICALVACUUM} corresponds precisely to the requirement that $c_s$ is $\frac12-$H\"{o}lder continuous across the initial vacuum interface.
Local-in-time existence theory for the Euler system with physical vacuum was developed by  Coutand-Shkoller~\cite{CoSh2011,CoSh2012} and Jang-Masmoudi~\cite{JaMa2009,JaMa2015}, where substantial new ideas with respect to prior works were introduced in order to handle the above mentioned vacuum degeneracy. See also a subsequent work by Luo-Xin-Zeng~\cite{LXZ}. 
 On the other hand, if $c_s$ is H\"{o}lder continuous but does not satisfy the physical vacuum condition, some ill-posedness result can be found in \cite{JM2012}. In this regime, a satisfactory theory is still far from being complete.
 We also mention the work by Lindblad~\cite{L1} on the vacuum free boundary problem for compressible liquids when the fluid is in contact with vacuum discontinuously, namely the density is positive at the vacuum boundary.  For an elegant discussion on the expansion of a gas in vacuum and a motivation for the introduction of the physical vacuum condition~\eqref{E:PHYSICALVACUUM} see also a recent work by Serre~\cite{Se}.

In the presence of damping, Liu~\cite{L2} constructed explicit spherically symmetric self-similar solutions that satisfy the physical vacuum condition and asymptotically converge to Barenblatt solutions of the porous media equation. 
Liu conjectured that  solutions of the Euler system with damping in vacuum converge to solutions of the porous media equation.  Huang-Marcati-Pan \cite{HMP} established the conjecture in the
 entropy solution framework, and in a recent work \cite{LZ}, Luo-Zheng justified the convergence for one-dimensional Euler system with damping in the physical vacuum free boundary setting.

\subsection{Methodology and plan of the paper}

At the heart of our approach is an almost invariant action of $\text{GL}^+(3)$ on the solutions of the Euler system E$_\gamma$.
Namely, for any given $(\rho,{\bf u })$ and $A\in \text{GL}^+(3)$ 
we consider a transformation 
\be\label{E:TRANSFORMATION}
(\rho,{\bf u }) \mapsto (\tilde\rho,\tilde{\bf u })
\ee
defined  by
\begin{align}
\rho(t,x) & = \det A^{-1}\tilde\rho\left(\det A^{\frac{1-3\gamma}{6}}t,\, A^{-1}x\right) \label{E:ALMOSTRHO}\\
{\bf u}(t,x) & = \det A^{\frac{1-3\gamma}{6}}A\tilde{\bf u}\left(\det A^{\frac{1-3\gamma}{6}}t,\, A^{-1}x\right) \label{E:ALMOSTVELOCITY}
\end{align}
It is straightforward to check that  if $(\rho,{\bf u})$ solve~\eqref{E:CONTINUITYE}--\eqref{E:VELOCITYE}, 
then the pair $(\tilde\rho,\tilde{\bf u })$ solves the following generalized Euler system:
\begin{subequations}
\label{E:GEULER}
\begin{alignat}{2}
\g_s\tilde\rho + \text{div}\, (\tilde\rho \tilde{\mathbf{u}})& = 0 &&\ \text{ in } \ \tilde\Omega(s)\,;\label{E:CONTINUITYG}\\
\tilde\rho\left(\pa_s  \tilde{\mathbf{u}}+ (\tilde{\mathbf{u}}\cdot\nabla) \tilde{\mathbf{u}}\right) +\Lambda\nabla (\tilde\rho^{\gamma}) &= 0 &&\ \text{ in } \ \tilde\Omega(s)\,;\label{E:VELOCITYG}
\end{alignat}
\end{subequations}
where 
\begin{align}\label{E:NEWQUANITITIES}
\Lambda: = \det A^{\frac23}A^{-1}A^{-\top}, \ \ \tilde\Omega(s) = A^{-1}\Omega(s), \ \ s = \det A^{\frac{1-3\gamma}{6}}t.
\end{align}
Matrix $\Lambda$ is clearly symmetric, positive definite and  belongs to  $\text{SL}(3)$. Here $A^{-\top}$ is by definition the transpose of the inverse of $A$.
With respect to the original momentum equation~\eqref{E:VELOCITYE}, a structural novelty  
is the presence of the matrix $\Lambda$ in~\eqref{E:VELOCITYG}. We should stress here that a related similarity transformation appears in~\cite{Se1997}.

In the special case when $A$ is a {\bf conformal} matrix (i.e. $A(\det A)^{-\frac13}\in \text{SO}(3)$) $\Lambda$ is the identity matrix and the transformation~\eqref{E:TRANSFORMATION}
is an exact invariance. This invariance simply expresses the fact that the problem possesses both a scaling and a rotational symmetry. Conformal case corresponds to arbitrary compositions
of these symmetries acting on the solution space of the Euler system {\em E}$_\gamma$.

A rich scaling freedom of compressible Euler equations is  
responsible for the presence of a finite parameter family of global-in-time vacuum states discovered by Sideris~\cite{Sideris}. We now proceed to explain this in a little more detail, while the rigorous arguments can be found in Section~\ref{S:MAINRESULT0}. 
Since the transformation~\eqref{E:ALMOSTRHO}--\eqref{E:ALMOSTVELOCITY} leaves the total mass $M(\rho)(t)=\int_{\mathbb R^3}\rho(t,x) \,dx$ unchanged we refer to it as
{\em mass-critical}. It is thus plausible to look for paths $t\mapsto A(t)\in\text{GL}^+(3)$ that yield special solutions of the E$_\gamma$-system of the form~\eqref{E:ALMOSTRHO}--\eqref{E:ALMOSTVELOCITY}.
After rescaling time and space according to the self-similar change of variables
\be\label{E:SSINTRO}
\frac{ds}{dt} = \det A(t)^{\frac{1-3\gamma}{6}}, \ \ y =A(t)^{-1}x
\ee
we discover that the unknowns $(\tilde\rho(s,y),\tilde{\bf u}(s,y))$ solve a new and more complicated looking system of equations. However, after introducing a modified velocity
\be\label{E:CONFORMAL}
{\bf U} = \tilde{\bf u} - A^{-1}A_s y,
\ee
the unknowns $(\tilde\rho,{\bf U})$ in turn solve a system with a much more agreeable structure, reminiscent of E$_\gamma$. The system takes a schematic form
\begin{align}
&\pa_s\tilde\rho +  \text{div}\,\left(\tilde\rho{\bf U}\right)  = 0 \label{E:CONTINUITYNEWINTRO}\\
& \pa_s{\bf U} +({\bf U}\cdot\nabla){\bf U}+\mathcal F_1(A,A_s){\bf U} + \frac\gamma{\gamma-1}\Lambda\nabla(\tilde\rho^{\gamma-1})
 = \mathcal F_2(A,A_s,A_{ss}) y, \label{E:VELOCITYNEWINTRO}
\end{align}
where $\mathcal F_i$, $i=1,2$, are some explicitly given matrix-valued smooth functions (see Section~\ref{S:SS}) and $\Lambda = \Lambda(A)$ is defined in~\eqref{E:NEWQUANITITIES}.
For any $\delta>0$ we can find a special solution of~\eqref{E:CONTINUITYNEWINTRO}--\eqref{E:VELOCITYNEWINTRO} by setting
${\bf U}=0$, $\frac\gamma{\gamma-1}\nabla(\tilde\rho^{\gamma-1})=-\delta y$ and 
\be\label{E:SIDERISODEINTRO}
\mathcal F_2(A,A_s,A_{ss}) = -\delta\Lambda.
\ee
A simple calculation shows that the equation~\eqref{E:SIDERISODEINTRO} is just a restatement of the ODE~\eqref{E:AEQUATION} in the rescaled time variable $s$ and this way we rediscover all the the affine motions from~\cite{Sideris}! 
This situation exhibits a certain analogy to the work of Merle-Rapha\"el-Szeftel~\cite{MeRaSz} wherein the stable self-similar behavior of solutions to the slightly supercritical Schr\"odinger equation is investigated, see Remark~\ref{R:MRS}

The question of stability of a given element of $\mathscr S$ reduces to a problem of nonlinear stability
of a particular steady state of~\eqref{E:CONTINUITYNEWINTRO}--\eqref{E:VELOCITYNEWINTRO} where $\mathcal F_2$ is replaced by  $-\delta\Lambda$ and $\mathcal F_1$ is a prescribed $s$-dependent matrix-valued function. However, there are several caveats. First, the solutions of~\eqref{E:AEQUATION} blow-up in finite $s$-time and therefore the self-similar time variable $s$ is not well-suited for the study of stability. This is related to the fact that the affine motions from $\mathscr S$, even though of Type I\footnote{By definition, Type I self-similar solutions approach an equilibrium exponentially fast in the logarithmic time variable~\eqref{E:TAUINTRO}. We refer to the review article~\cite{EgFo} for a detailed discussion of this terminology.}, are {\em not} self-similar with respect to the rescaling~\eqref{E:SSINTRO}, but instead the associated solutions $A(t)$ grow linearly in $t$ independently of $\gamma>1$. We change to a new time scale $\tau$ 
\be\label{E:TAUINTRO}
\tau\sim_{t\to\infty}\log t,
\ee
which resolves the above issue and simultaneously elucidates the stabilizing effect of the expansion of the background affine motion. 
In the new variables the $\tau$-dependent 
coefficient $\mathcal F_1$ in~\eqref{E:VELOCITYNEWINTRO} gives a manifestly dissipative energy contribution in the range $\gamma\in(1,\frac53)$ and suggests a possible nonlinear stability mechanism.
The logarithmic change of time-scale~\eqref{E:TAUINTRO} is 
often used in the study of Type I behavior and has been crucially exploited in authors' work on stable expanding solutions for the mass-critical Euler-Poisson system~\cite{HaJa}.

To turn the above strategy to a rigorous proof, we need a good well-posedness theory. Due to the physical vacuum condition~\eqref{E:PHYSICALVACUUM} the initial enthalpy is not smooth across the vacuum interface and this makes the question of (even) local-in-time existence of solutions rather challenging.  Only recently, the problem of local-in-time well-posedness in the presence of the physical vacuum condition was fully solved~\cite{CoSh2012,JaMa2015}. At the core of both approaches is the use of Lagrangian coordinates which is particularly convenient as the problem is automatically pulled back to a fixed domain.
Our work draws especially on the approach from~\cite{JaMa2015} where a high-order energy method relying on only spatial differential operators is developed.

In our case we work with a Lagrangian formulation of~\eqref{E:CONTINUITYNEWINTRO}--\eqref{E:VELOCITYNEWINTRO} with respect to the modified velocity ${\bf U}$,
where the basic unknown is the flow map defined by the ODE
\be\label{E:LAGRINTRO}
\dot\eta(t,y) = {\bf U}(t,\eta(t,y))
\ee
such that $\eta\equiv y$ represents the expanding background solution. 
While the continuity equation~\eqref{E:CONTINUITYNEWINTRO} retains the same form as~\eqref{E:CONTINUITYE}, the presence of the matrix $\Lambda$ in the momentum equation~\eqref{E:VELOCITYNEWINTRO}   
introduces two important difficulties compared  
 to the work~\cite{JaMa2015}:
\begin{enumerate}
\item[(i)]  Since the curl of the modified pressure term $\Lambda\nabla(\tilde\rho^{\gamma-1})$ {\em does not} vanish, the curl of the velocity field {\em does not} satisfy a simple transport equation, a crucial tool in implementing both strategies in~\cite{CoSh2012, JaMa2015};
\item[(ii)] It is a priori not obvious  
how to extract a positive definite energy contribution from~\eqref{E:VELOCITYNEWINTRO}.
\end{enumerate}
To handle the first issue we note that an adapted $\Lambda$-curl operator
defined via
\[
\left[\text{curl}_{\Lambda}{\bf F}\right]_i := \epsilon_{ijk}\Lambda^s_{j}{\bf F}^k,_s \ \ (\epsilon_{ijk} \ \text{is the usual permutation symbol})
\] 
annihilates the $\Lambda$-gradient $\Lambda\nabla$.\footnote{One could introduce a more geometric language so that $\text{curl}_{\Lambda}$ and $\Lambda\nabla$ are ``natural" operators with respect
to a given metric structure, but for the sake of conciseness we choose not to.} Our key observation is that the $\Lambda$-curl of ${\bf U}$ satisfies a transport equation which at the top order decouples from the rest of the dynamics, allowing us to obtain ``good" estimates for the (Lagrangian pull-back of) $\Lambda$-curl of ${\bf U}$. 
The perturbation around the steady state is given by 
\[
\uptheta(\tau,y):=\eta(\tau,y)- y.
\] 
A simplified schematic form of the equation for $\uptheta$  is 
\begin{align}\label{E:THETAINTRO}
w^\alpha e^{\mu(\gamma)\tau} \left(\pa_{\tau\tau}\uptheta + c\pa_{\tau}\uptheta\right) + \left(\Lambda\nabla\right)^* \mathcal F(w^{1+\alpha}D\uptheta) = 0 
\ \ \text{ in } \ \Omega=B_{1}({\bf 0}), 
\end{align}
where $\left(\Lambda\nabla\right)^*$ is the Lagrangian pull-back of $\Lambda\nabla$, $\mathcal F:\mathbb M^{3\times3}\to\mathbb R$ is a smooth function, $\mu(\gamma),c,\alpha=\alpha(\gamma)>0$ given constants, and $w:\Omega\to\mathbb R$ is the equilibrium fluid enthalpy satisfying $w^\alpha = \rho_A(0)$ (initial density of the background expanding solution). See~\eqref{E:THETAEQUATION} for a precise version. 
Starting from~\eqref{E:THETAINTRO} we derive a high-order energy by integrating the equation against spatial derivatives of  $\Lambda^{-1}\uptheta_\tau$.
To resolve the issue (ii) mentioned above we crucially use the fact that $\Lambda$ is a real symmetric non-degenerate matrix depending only on $\tau$. If $\pa$ denotes a generic high-order differential operator, 
we apply it to the $\Lambda$-gradient term in~\eqref{E:THETAINTRO} and use the Leibniz rule to isolate the leading order term. After a  suitable anti-symmetrization we can extract a coercive energy contribution at the expense of creating a new top order term containing 
$\text{curl}^*_{\Lambda}(\pa\uptheta)$ where $\text{curl}^*_{\Lambda}$ is the Lagrangian pull-back of the operator $\text{curl}_{\Lambda}$. 
This procedure relies on a delicate algebraic structure of the top-order term and requires diagonalizing the matrix $\Lambda$, see Lemma~\ref{L:KEYLEMMA}. 
The $\text{curl}^*_{\Lambda}$ in turn satisfies a ``good" transport equation 
and we are able to
estimate it in terms of our natural high-order energy. 

The leading order energy contribution associated with $\pa_\tau$-derivatives in~\eqref{E:THETAINTRO} takes the following schematic form
\be\label{E:TAUNORM}
\frac12\frac{d}{d\tau}\left(e^{\mu(\gamma)\tau} \|w^{\frac\alpha2}\sqrt{\Lambda^{-1}}\pa_\tau\uptheta\|_{L^2(\Omega)}^2 \right) 
+ (c-\frac12\mu(\gamma))e^{\mu(\gamma)\tau}\|w^{\frac\alpha2}\sqrt{\Lambda^{-1}}\pa_\tau\uptheta\|_{L^2(\Omega)}^2.
\ee
As shown in Proposition~\ref{P:ENERGYESTIMATE1}, the analogue of the requirement $c-\frac12\mu(\gamma)\ge0$ amounts to the condition $\gamma\le\frac53$ which is assumed in Theorem~\ref{T:MAINTHEOREM}. Similarly, 
an exponentially growing term $e^{\mu(\gamma)\tau}$ in~\eqref{E:THETAINTRO} acts as a medium for stabilization 
and we make crucial use of it in closing the high-order estimates.

After introducing a high-order weighted norm $\norm$, we prove an energy inequality of the form
\begin{align}\label{E:ENERGYINTRO}
\norm(\tau) \le C_0 + C \int_0^\tau e^{-\mu_\ast\tau'}\norm(\tau')\,d\tau' \ \ \text{ for some $\mu_\ast>0$}.
\end{align}
The constant $C_0>0$ depends only on the initial data while the exponentially decaying factor inside the integral reflects the presence of exponential-in-$\tau$ weights in~\eqref{E:TAUNORM}.
The basic technical tool in the proof of~\eqref{E:ENERGYINTRO} is the use of Hardy-Sobolev embeddings between weighted Sobolev spaces explained in Appendix~\ref{A:HARDY}. The global existence for sufficiently small perturbed initial data follows from a standard continuity argument applied to~\eqref{E:ENERGYINTRO}.

Last but not least, it is important for our argument 
to use only spatial differential operators that do not destroy the structure of the time weight in  \eqref{E:THETAINTRO}.  
If we use vector fields containing $\tau$-derivatives, we encounter unpleasant commutator terms stemming from the exponential weights in~\eqref{E:THETAINTRO}.  
By employing the approach of Jang-Masmoudi~\cite{JaMa2015} with a significant refinement as explained above, we are able to control the high-order energy of spatial derivatives with powers of $w$ 
as weights {\it uniformly-in-time}, see the definition of our high-order norm~\eqref{E:SNORM}. 
The key weighting pattern~\cite{JaMa2015} is that every normal derivative with respect to the boundary $\pa\Omega=\mathbb S^2$ requires adding a multiple of the fluid enthalpy $w$
in the weight. If not for this weighting structure, it would be impossible to close the nonlinear energy estimates, see Remark~\ref{R:Comm}. This is consistent with the equation~\eqref{E:THETAINTRO} and such weighted spaces relate to the usual Sobolev spaces via Hardy-Sobolev embeddings, as laid out in Appendix~\ref{A:HARDY}.

We conclude the introduction with the plan for the rest of the paper. In Section~\ref{S:MAINRESULT0} we explain in detail  how the affine motions relate to the symmetries of E$_\gamma$-system, Section~\ref{S:SS}, and we then 
formulate the stability problem and state the main result in Lagrangian coordinates, Sections~\ref{S:LAGR}--\ref{S:MAINRESULT}. Vorticity bounds are explained in Section~\ref{S:VORTICITY},
main energy estimates in Section~\ref{S:ENERGY},
and proof of the main theorem in Section~\ref{S:MAINPROOF}. In Appendix~\ref{A:ASYMPTOTIC} we summarize the properties of the background solution using the results from~\cite{Sideris}. Appendix~\ref{A:COMM} contains the basic properties of commutators between various differential operators used in the paper, while Appendix~\ref{A:HARDY} contains the statements of frequently used Hardy-Sobolev embeddings. In Appendix~\ref{A:LAG}, we give an alternative and equivalent formulation of the problem starting from the Lagrangian coordinates. 

\section{Formulation and Main result}\label{S:MAINRESULT0}

\subsection{A self-similar rescaling}\label{S:SS}

Motivated by an (almost) invariant transformation~\eqref{E:ALMOSTRHO}--\eqref{E:ALMOSTVELOCITY} we 
are seeking for a path 
\[
\mathbb R_+\ni t\mapsto A(t) \in \text{GL}^+(3)
\]
of rescaling matrices such that the unknowns $(\tilde\rho,\tilde{\bf u})$ defined by  
\begin{align}
\rho(t,x) & = \det A(s)^{-1} \tilde{\rho}(s, y) , \label{E:DENSITY2}\\
\mathbf{u}(t,x) & = \det A(s)^{\frac{1-3\gamma}{6}}A(s)\tilde{\mathbf{u}}(s,y) , \label{E:VELOCITY2}
\end{align}
solve the Euler system E$_\gamma$. Here the new time and space coordinates $s$ and $y$ are given by 
\be\label{E:TIMESCALE}
\frac{ds}{dt} = \frac{1}{ \det A(t)^{\frac{3\gamma-1}{6}}}, \ \ y = A(t)^{-1}x,
\ee
motivated by the transformation~\eqref{E:ALMOSTRHO}--\eqref{E:ALMOSTVELOCITY}.
Introducing the notation
\be\label{E:MUDEF}
\mu(s) : = \det A(s)^{\frac13},
\ee 
a simple application of the chain rule transforms the equations~\eqref{E:CONTINUITYE}--\eqref{E:VELOCITYE} into
\begin{align}
\tilde\rho_s - 3\frac{\mu_s}{\mu}\tilde\rho - \left(A^{-1}A_sy\cdot\nabla\right)\tilde\rho + \text{div}\,\left(\tilde\rho\tilde{\bf u}\right)& =0, \label{E:CONTINUITYHAT2}\\
\pa_s\tilde{\bf u} - \frac{3\gamma-1}{2}\frac{\mu_s}{\mu}\tilde{\bf u} + A^{-1}A_s\tilde {\bf u} + ( \tilde{\mathbf{u}}\cdot\nabla) \tilde{\mathbf{u}} \label{E:DENSITYHAT2}
-A^{-1}A_s y\cdot \nabla\tilde{\bf u} 
+\frac\gamma{\gamma-1}\Lambda\nabla(\tilde\rho^{\gamma-1}) &=0,
\end{align}
where we recall that
$
\Lambda(s) = \det A(s)^{\frac23}A(s)^{-1}A(s)^{-\top}.
$
A structural miracle of the new system~\eqref{E:CONTINUITYHAT2}--\eqref{E:DENSITYHAT2} is that it can be recast in a simpler form, close to the original Euler system E$_\gamma$. 
To that end we introduce a modified velocity
\be
{\bf U}(s,y)  : = \tilde{\bf u}(s,y) +B(s) y, \label{E:MODIFIEDVELOCITY}
\ee
where 
\be
 B(s) :=- A^{-1}A_s, \label{E:MATRIXB}
\ee
and note that 
\be\label{E:JACOBI}
\text{div}\,\left( A^{-1}A_s  y\right) = \text{Tr}\left( A^{-1}A_s\right) = \pa_s\det A(s) \det A(s)^{-1} = \frac{3\mu_s}{\mu}.
\ee
Then the system~\eqref{E:CONTINUITYHAT2}--\eqref{E:DENSITYHAT2} can be rewritten as
\begin{align}
&\pa_s\tilde\rho +  \text{div}\,\left(\tilde\rho{\bf U}\right)  = 0 \label{E:CONTINUITYNEW}\\
& \pa_s{\bf U} +({\bf U}\cdot\nabla){\bf U}+\left(-\frac{3\gamma-1}{2}\frac{\mu_s}{\mu}\,\text{{\bf Id}}-2B(s)\right){\bf U} + \frac\gamma{\gamma-1}\Lambda\nabla(\tilde\rho^{\gamma-1}) \notag \\
& \ \ \ = \left[B_s +\left(-\frac{3\gamma-1}{2}\frac{\mu_s}{\mu}\,\text{{\bf Id}}-B\right)B\right] y , \label{E:VELOCITYNEW}
\end{align}
where we used~\eqref{E:JACOBI} in verifying~\eqref{E:CONTINUITYNEW}. 

For any given $\delta\in\mathbb R$ we look 
for solutions $({\bf U},\tilde \rho,B)$ of~\eqref{E:CONTINUITYNEW}--\eqref{E:VELOCITYNEW} of the form
\begin{align}
{\bf U} &={\bf 0} \ \ \text{ in } \Omega,\label{E:UEQUATION}\\
B_s +\left(-\frac{3\gamma-1}{2}\frac{\mu_s}{\mu}\,\text{{\bf Id}}-B\right)B & =-\delta\Lambda, \ \ s\ge0, \label{E:BEQUATION}\\
 \frac\gamma{\gamma-1}\nabla(\rho^{\gamma-1}) & = - \delta y \ \ \text{ in } \Omega. \label{E:RHOEQUATION}
\end{align}
Equation~\eqref{E:RHOEQUATION} is easy to solve and leads to 
the enthalpy profile $ w:=\tilde \rho^{\gamma-1}$ given by
\begin{align}\label{E:BARW}
 w( y) = \frac{\delta(\gamma-1)}{2\gamma}\left(1-| y|^2\right), \quad  y\in \Omega,
\end{align}
where we consider only the case $\delta>0$ to ensure that the physical vacuum condition~\eqref{E:PHYSICALVACUUM} holds true.
Equation~\eqref{E:UEQUATION} gives
\be\label{E:HATUSTEADY}
{\bf u} (s, y) = A^{-1}A_s y.
\ee
Interestingly, equation~\eqref{E:BEQUATION} does not posses globally defined solutions in the $s$-variable; instead we must rescale back to the original time variable $t$. 
Recalling that $\frac{dt}{ds} = \det {A}^{\frac{3\gamma-1}{6}}$ and~\eqref{E:MATRIXB} we can re-express~\eqref{E:BEQUATION} as a differential equation for $A(t)$. Note that 
\begin{align*}
B_s &= -\det A^{\frac{3\gamma-1}{6}} \partial_t(A^{-1} A_t\det{A}^{\frac{3\gamma-1}{6}}) \\
& = \det{A}^{\frac{3\gamma-1}3}A^{-1} A_t A^{-1}A_t - \det{A}^{\frac{3\gamma-1}3} A^{-1}A_{tt} +\frac{3\gamma-1}{6} \det{A}^{\frac{3\gamma-1}3}
\frac{\pa_t\det{A}}{\det{A}}\\
& = B^2 +\frac{3\gamma-1}{2} \frac{\mu_s}{\mu} B - \det{A}^{\frac{3\gamma-1}3} A_{tt}.
\end{align*}
Therefore, from~\eqref{E:BEQUATION} and the definition of $\Lambda$ it follows that 
\begin{align*}
- \det{A}^{\frac{3\gamma-1}3} A^{-1}A_{tt} = -\delta \det{A}^{\frac23}A^{-1}A^{-\top},
\end{align*}
which is equivalent to 
\be\label{E:SIDERISODE}
A_{tt} = \delta \det{A}^{1-\gamma} A^{-\top}.
\ee
We have recovered~\eqref{E:AEQUATION} (modulo a harmless parameter $\delta>0$).  
This is exactly the ODE describing the affine motion of a fluid blob from~\cite{Sideris}.
By Theorem 6~\cite{Sideris} for any $\gamma>1,\delta>0$ there exists a global-in-$t$ solution to~\eqref{E:SIDERISODE},
and the family of solutions $({\bf U},\tilde\rho,B)$ comprises all the affine motions constructed by Sideris in~\cite{Sideris}.  
We observe that  the space of all such solutions denoted by $\mathscr S$ 
forms a finite-parameter family parametrized by the choice of the parameter $\delta$ and the initial conditions for the ODE~\eqref{E:SIDERISODE}, 
\[
(A_0,A_1,\delta)\in\text{GL}^+(3)\times \mathbb M^{3\times3}\times \mathbb R_+.
\]
Here the parameter $\delta>0$ dictates the total mass of the gas. From \eqref{E:BARW}, one can easily see that the total mass can be viewed as a function of $\delta$ for any fixed $\gamma>1$. In particular, since $\delta>0$ is arbitrary, any positive finite total mass is allowed for the expansion of the gas. 

\begin{remark}\label{R:MRS}
In~\cite{MeRaSz} stable self-similar behavior of solutions to the slightly supercritical Schr\"odinger equation is investigated. There the analogue of the key transformation~\eqref{E:MODIFIEDVELOCITY} leads to a reformulation of the nonlinear Schr\"odinger equation, analogous to~\eqref{E:CONTINUITYNEW}--\eqref{E:VELOCITYNEW}.
In the setting of~\cite{MeRaSz} explicit formula for self-similar solutions is not available, but the new formulation is ideally suited for the study of their existence and stability.
The modified velocity ${\bf U}$ defined by~\eqref{E:MODIFIEDVELOCITY} differs from $\tilde{\bf u}$ by an affine transformation in analogy to the quantity $P_b$ from~\cite{MeRaSz} which arises by a suitable ``conformal" change of variables honoring the symmetries of the nonlinear Schr\"odinger equation.
\end{remark}

%%%%%%%%%%%%%%%%%%%%%%%%%%%%%%%%%%%%%%%%%%

\subsection{Lagrangian formulation}\label{S:LAGR}

Let us fix an element of $\mathscr S$ given by the triple $(\rho_A,\,{\bf u}_A,\, A)$, where $(\rho_A,{\bf u}_A)$ are given by~\eqref{E:RHOAUA}
and $A\in \text{GL}^+(3)$ is a unique solution to the initial value problem~\eqref{E:AEQUATION}--\eqref{E:AEQUATIONINITIAL}.
Note that these solutions are fixed by a choice of free parameters $(A_0,A_1,\delta)$ as described above and it is in one-to-one correspondence with 
solutions to~\eqref{E:CONTINUITYNEW}--\eqref{E:VELOCITYNEW} given by the triple
\be\label{E:SS}
{\bf U}=0, \ \  w( y) = \frac{\delta(\gamma-1)}{2\gamma}(1-| y|^2), \ \ \ B(s) = -A^{-1} A_s,
\ee
where we recall~\eqref{E:TIMESCALE} for the definition of $s$ and the relationship 
$$
\tilde\rho=w^{\alpha}, \ \ \alpha:=\frac1{\gamma-1}.
$$

Upon decomposing
\be\label{E:DECOMPOSITION}
A = \mu O, \ \ \mu=\det A^{\frac13}, \ O\in \text{SL}(3)
\ee
we see that 
\be\label{E:BSIMPLER}
B(s) = -\mu^{-1}O^{-1}(\mu_s O + \mu O_s) = -\frac{\mu_s}{\mu}\,\text{{\bf Id}}  - \Gamma, \ \ \Gamma : =O^{-1} O_s.
\ee

To study the stability of the  steady state given~\eqref{E:SS} we shall reformulate the problem in Lagrangian coordinates. 
Letting 
\[
\Omega : = B_1({\bf{0}})
\]
be the unit ball in $\mathbb R^3$,
we introduce $\eta:\Omega\to\tilde\Omega(s)$ as a flow map associated with the renormalized velocity field ${\bf U}:$
\begin{align}
\eta_s(s, y) &= \mathbf{U}(s,\eta(s, y)), \label{E:FLOWMAPV} \\
\eta(0, y) &= \eta_0( y), \label{E:ETAINITIALV}
\end{align}
where $\eta_0:\Omega\to\tilde\Omega(0)$ is a sufficiently smooth diffeomorphism to be specified later. 
To pull-back~\eqref{E:CONTINUITYNEW}-\eqref{E:VELOCITYNEW} to the fixed domain $\Omega$, we introduce the notation
\begin{align}
\A : = [D\eta]^{-1} \ \ &\text{ (Inverse of the Jacobian matrix)},\\
\J  : = \det [D\eta] \ \ &\text{ (Jacobian determinant)},\\
f : = \tilde\rho\circ \eta \ \ &\text{ (Lagrangian density)}, \\ 
{\bf V} : = {\bf U}\circ \eta \ \ &\text{ (Lagrangian modified velocity)},\\
a : = \J\A \ \ &\text{ (Cofactor matrix)}.
\end{align}
From $\A [D\eta] = \text{{\bf Id}}$, one can obtain the differentiation formula for $\A$ and $\J$: 
\[
\pa \A^k_i = - \A^k_\ell \pa \eta^\ell,_s \A^s_i \  ; \quad \pa \J = \J \A^s_\ell\pa\eta^\ell,_s 
\]
for $\pa=\pa_\tau$ or $\pa=\pa_i$, $i=1,2,3$. Here we have used the Einstein summation convention and the notation $F,_k$ to denote the $k^{th}$ partial  derivative of $F$. Both expressions will be used throughout the paper.

It is well-known~\cite{CoSh2012,JaMa2015} that the continuity equation~\eqref{E:CONTINUITYNEW} reduces to the relationship
\[
f \J = f_0\J_0. 
\]
We choose $\eta_0$ such that  
$ w  = (f_0 \J_0)^{\gamma-1}$ where $ w$ is given in \eqref{E:BARW}. For given initial density function $\rho_0$ so that $\rho_0/\rho_A$ is smooth, the existence of such $\eta_0$ follows from the Dacorogna-Moser theorem \cite{DM}. As a consequence the Lagrangian density can be expressed as
\be \label{E:CONTINUITYLAGRV}
f=  w^\alpha \J^{-1}
\ee
where we recall that $\alpha =\frac1{\gamma-1}.$  We remark that this specific choice of $\eta_0$ (gauge fixing) is important for our analysis; $\eta_0$ being the identity map corresponds to the background expanding solution, $\eta_0$ measures how the initial density is distributed with respect to the background equilibrium, and hence it provides a natural framework for stability analysis around the identity map.  

Using~\eqref{E:BSIMPLER} equation~\eqref{E:VELOCITYNEW} takes the following form in Lagrangian coordinates: 
\be\label{E:VELOCITYLAGR}
\eta_{ss} +  \frac{5-3\gamma}2\frac{\mu_s}{\mu}\eta_s+ 2\Gamma\eta_s+\delta\Lambda \eta + \frac\gamma{\gamma-1}\Lambda \A^\top \nabla(f^{\gamma-1}) = 0
\ee
which can be expressed in the component form~\cite{CoSh2012,JaMa2015}
\be\label{E:ETAPDE1}
 w^\alpha \left(\pa_{ss}\eta_i-\frac{5-3\gamma}2b\pa_s\eta_i+2\Gamma_{ij}\pa_s\eta_j+\delta\Lambda_{ij}\eta_j\right) + \Lambda_{ij}( w^{1+\alpha} \A^k_j\J^{-\frac1\alpha}),_k = 0.
\ee
Since in the $s$-time variable the matrix $B(s)$ blows up in finite time, we introduce a new time variable $\tau$ via 
\be\label{E:TIMESCALE2}
\frac{d\tau}{ds} = \det{A}^{\frac{3\gamma-3}{6}} = \mu^{\frac{3\gamma-3}{2}} \ \text{ or equivalently } \ \frac{d\tau}{dt} = \frac1{\mu}. 
\ee
Since $\mu$ grows linearly in $t$ by results from~\cite{Sideris} (see Lemma~\ref{L:GAMMASTARASYMP}), the new time $\tau$ grows like $\log t$ as $t\to\infty$ and therefore corresponds to 
a logarithmic time-scale with respect to the original time variable $t$.
Equation~\eqref{E:VELOCITYLAGR} takes the form
\be\label{E:VELOCITYLAGR2}
\mu^{3\gamma-4}\left(\mu\eta_{\tau\tau} +  \mu_\tau\eta_\tau+ 2\mu\Gamma^*\eta_\tau \right)+\delta\Lambda \eta + \frac\gamma{\gamma-1} \Lambda\A^\top \nabla(f^{\gamma-1}) = 0,
\ee
where
\[
\Gamma^* =  O^{-1}O_\tau.
\]
In coordinates, for any $i\in\{1,2,3\}$ it reads 
\begin{align}\label{E:ETAPDE2}
 w^\alpha\mu^{3\gamma-4}\left(\mu\pa_{\tau\tau}\eta_i+\mu_\tau\pa_\tau\eta_i+2\mu\Gamma^*_{ij}\pa_\tau\eta_j\right)+\delta  w^\alpha\Lambda_{i\ell}\eta_\ell  + ( w^{1+\alpha} 
\Lambda_{ij}\A^k_j\J^{-\frac1\alpha}),_k = 0.
\end{align} 
Defining the perturbation 
\begin{align} 
\uptheta( \tau, y):= \eta( \tau,y) -  y
\end{align}
equation~\eqref{E:ETAPDE2} takes the form:
\begin{align}
& w^\alpha\mu^{3\gamma-3} \left(\pa_{\tau\tau}\uptheta_i+\frac{\mu_\tau}{\mu}\pa_\tau\uptheta_i+2\Gamma^*_{ij}\pa_\tau\uptheta_j\right) +\delta w^\alpha \Lambda_{i\ell}\uptheta_\ell +  \left( w^{1+\alpha} \Lambda_{ij}\left(\A^k_j\J^{-\frac1\alpha}-\delta^k_j\right)\right),_k = 0, \label{E:THETAEQUATION}
\end{align}
equipped with the initial conditions
\begin{align}
& \uptheta(0,  y) = \uptheta_0(  y), \ \ \uptheta_\tau(0,  y) ={\bf V}(0, y) = {\bf V}_0(  y), \ \ \left( y\in \Omega=B_1({\bf 0})\right).  \label{E:THETAINITIAL}
\end{align}

We will use both $\uptheta_\tau=\pa_\tau\uptheta$ and ${\bf V}$ to denote the velocity field throughout the paper. 

\begin{remark} The Lagrangian formulation~\eqref{E:FLOWMAPV}--\eqref{E:ETAINITIALV} uses the modified velocity 
${\bf U}$ 
to define the flow map $\eta$; this is a coherent strategy as we identified ${\bf U}$ as a natural perturbed velocity around the set $\mathscr S$ of Sideris' affine motions in Eulerian coordinates. 
We remark that the order of this derivation can be reversed. We can start with the Lagrangian formulation of the E$_\gamma$-system using the original velocity ${\bf u}$ and a flow map $\zeta$, wherein the affine motions are identified as solutions of the form $\zeta(t,y) = A(t) y$ as  in \eqref{E:affine}. 
By suitably conjugating the flow map $\zeta$ by $A(t)^{-1}$ and introducing the time rescaling \eqref{E:TIMESCALE2} we can arrive at  \eqref{E:VELOCITYLAGR2} and subsequently~\eqref{E:THETAEQUATION}. See Appendix \ref{A:LAG} for such a derivation.  
However, we have decided on presenting the formulation derived from Eulerian coordinates as it links the symmetries of the E$_\gamma$-system to the existence of affine motions and provides a systematic approach to the question of global existence of solutions in their vicinity without any symmetry assumptions.
\end{remark}

\subsection{Notation}

\noindent
{\bf Lie derivative of the flow map and modified vorticity.}
For vector-fields ${\bf F}:\Omega\to\mathbb R^3$, we introduce the Lie derivatives: full gradient along the flow map $\eta$
\begin{align}
[\nabla_\eta {\bf F}]^i_r := \A^s_r {\bf F}^i,_s 
\end{align}
the divergence
\begin{align}
\text{div}_\eta{\bf F} := \A^s_\ell {\bf F}^\ell,_s,
\end{align}
the modified curl-operator
\begin{align}\label{E:LITTLECURLDEF}
\left[\text{curl}_{\Lambda\A}{\bf F}\right]^i := \epsilon_{ijk}(\Lambda\A)^s_{j}{\bf F}^k,_s = \epsilon_{ijk}\Lambda_{jm}\A^s_m{\bf F}^k,_s,
\end{align}
(where $\epsilon_{ijk}$ is the standard permutation symbol)
and the anti-symmetric curl matrix 
\begin{align}\label{E:BIGCURLDEF}
\left[\text{Curl}_{\Lambda\A}{\bf F}\right]^i_j :=\Lambda_{jm}\A^s_m{\bf F}^i,_s- \Lambda_{im}\A^s_m{\bf F}^j,_s
\end{align}
We will also use $D_{\,}{\bf F}$, $\text{div}_{\,}{\bf F}$, $\text{curl}_{\,}{\bf F}$ to denote its full gradient, its divergence, and its curl:
\[
[D_{\,}{\bf F}]^i_j = {\bf F}^i,_j; \ \ \ \text{{div}}_{\,}{\bf F} ={\bf F}^\ell,_\ell; \ \  \ \left[\text{{curl}} _{\,}{\bf F}\right]^i =  \epsilon_{ijk} {\bf F}^k,_j, \ \ i,j=1,2,3.
\]

\

\noindent
{\bf Function spaces.} For any given integer $k\in\mathbb N$, a function $f:\Omega\to\mathbb R$ and any smooth non-negative function $\varphi:\Omega\to\mathbb R^+$ we introduce the
notation
\begin{align}
\|f\|_{k,\varphi}^2 : = \int_{\Omega}\varphi w^{k} |f(y)|^2\,dy.
\end{align} 
For vector-fields ${\bf F}:\Omega\to\mathbb R^3$ or $2$-tensors ${\bf T}:\Omega\to\mathbb M^{3\times3}$ we analogously define
\begin{align}
\|{\bf F}\|_{k,\varphi}^2 : =\sum_{i=1}^3 \|{\bf F}_i\|_{k,\varphi}^2,  \ \ \|{\bf T}\|_{k,\varphi}^2 : =\sum_{i,j=1}^3 \|{\bf T}_{ij}\|_{k,\varphi}^2.
\end{align}
In this paper we will always choose the weight function $\varphi$ to be either $\psi$ or $1-\psi$ where $\psi: B_1({\bf 0})\to\mathbb [0,1]$ is a smooth cut-off function satisfying 
\be\label{E:PSIDEF}
\psi= 0 \ \text{ on } \, B_{\frac14}({\bf 0}) \ \text{ and } \ \psi =1 \ \text{ on }\, B_1({\bf 0})\setminus B_{\frac34}({\bf 0}).
\ee
It will allow us to localize the estimates to two different regions of the domain $\Omega$.

\

\noindent 
{\bf Angular gradient.}
For any $f\in H^1(\Omega)$ we define the {\em angular gradient} $\slashed\nabla$ as the projection of the usual gradient onto the plane tangent to the sphere of constant $(t,r)$.
In other words
\begin{align}\label{E:TANGENTIAL}
\sn f(y) = \nabla f(y) - \frac{y}{r}(\frac{y}{r}\cdot\nabla)f(y) = \nabla f(y) - \frac yr\pa_r f(y),
\end{align}
where $\pa_r  : = \frac yr\cdot\nabla$ and $r=|y|$.
For $i\in\{1,2,3\}$ we can evaluate the components of $\sn_i$ using~\eqref{E:TANGENTIAL}:
\begin{align}\label{E:ANGGRAD}
\sn_i= \frac{y_j(y_j\pa_i-y_i\pa_j)}{r^2}. 
\end{align}
The angular gradients~\eqref{E:ANGGRAD} are the natural, geometric tangential gradients near the boundary of our domain and they will be crucially used in the analysis. For the commutation rules between $\slashed\nabla$ and other vector fields, see Lemma \ref{L:COMMUTATORS}. 

For any multi-index $\beta = (\beta_1,\beta_2,\beta_3)\in \mathbb Z_{\ge0}^3$ and any $a\in\mathbb Z_{\ge0}$ we introduce the following differential operator
\begin{align}
\der : = \pa_r^a \sn_1^{\beta_1}\sn_2^{\beta_2}\sn_3^{\beta_3}.
\end{align}
Similarly for any multi-index $\nu = (\nu_1,\nu_2,\nu_3)\in \mathbb Z_{\ge0}^3$we denote
\begin{align}
\pa^\nu : = \pa_{y_1}^{\nu_1}\pa_{y_2}^{\nu_2}\pa_{y_3}^{\nu_3}, 
\end{align}
where $\pa_{y_i}$, $i=1,2,3$ denote the usual Cartesian derivatives.

\subsection{Main theorem in  Lagrangian coordinates} \label{S:MAINRESULT}

To state the main theorem in our modified Lagrangian coordinates, we first introduce the high-order weighted Sobolev norm that measures the size of the deviation $\uptheta$. For any $N\in\mathbb N$, let
\begin{align}
& \mathcal S^N(\uptheta,{\bf V})(\tau)  = \mathcal S^N(\tau)  \notag \\ 
&: =   \sum_{a+|\beta|\le N}\sup_{0\le\tau'\le\tau}
\Big\{\mu^{3\gamma-3}\left\|\der{\bf V}\right\|_{a+\alpha,\psi}^2 +\left\|\der{\uptheta}\right\|_{a+\alpha,\psi}^2 
 + \left\|\nabla_\eta\der \uptheta\right\|_{a+\alpha+1,\psi}^2
 +\left\|\text{div}_\eta\der \uptheta\right\|_{a+\alpha+1,\psi}^2 \Big\} \notag \\
&+\sum_{|\nu|\le N}\sup_{0\le\tau'\le\tau}
\Big\{\mu^{3\gamma-3}\left\|\pa^\nu{\bf V}\right\|_{\alpha,1-\psi}^2+\left\|\pa^\nu\uptheta\right\|_{\alpha,1-\psi}^2+\left\|\nabla_\eta\pa^\nu \uptheta\right\|_{\alpha+1,1-\psi}^2
+\left\|\text{div}_\eta\pa^\nu \uptheta\right\|_{\alpha+1,1-\psi}^2\Big\}  \label{E:SNORM} 
\end{align}
Additionally we introduce another high-order quantity measuring the modified vorticity of ${\bf V}$ which is a priori not controlled by the norm $\mathcal S^N(\tau)$: 
\begin{align}
\vortnorm[{\bf V}](\tau) & :=\sum_{a+|\beta|\le N} \sup_{0\le\tau'\le\tau}\left\|\text{Curl}_{\Lambda\A}\der{\bf V}\right\|_{a+\alpha+1,\psi}^2  +\sum_{|\nu|\le N} \sup_{0\le\tau'\le\tau} \left\|\text{Curl}_{\Lambda\A}\pa^\nu{\bf V}\right\|_{\alpha+1,1-\psi}^2.\label{E:VORTNORMDEF}
\end{align}
We also define the quantity $\vortnorm[\uptheta]$ analogously, with $\uptheta$ instead of ${\bf V}$ in the definition~\eqref{E:VORTNORMDEF}.

We are now ready to state our main result. 

%%%%%%%%%%%%%%%%%%%%%%%%%%%%%%%%%%%%%%

\begin{theorem}[Nonlinear stability of affine motions]\label{T:MAINLAGR}
Assume that $\gamma\in(1,\frac53]$ or equivalently $\alpha \in[\frac32,\infty)$. 
 Let $N\geq 2\lceil \alpha \rceil +12$ be fixed. 
Let $(A_0,A_1,\delta)\in\text{{\em GL}}^+(3)\times \mathbb M^{3\times3}\times \mathbb R_+$ be a given triple parametrizing an affine motion $A(\tau)$ from the set $\mathscr S$ and let 
\begin{align}
\mu(\tau) &= (\det A(\tau))^\frac13, \ \ \Lambda(\tau) =\det A(\tau)^{\frac23}A(\tau)^{-1}  A(\tau)^{-\top}, \notag \\
\mu_0 & : = \frac{3\gamma-3}2\mu_1, \ \ \mu_1:=\lim_{\tau\to\infty}\frac{\mu_\tau(\tau)}{\mu(\tau)}.  \label{E:MUZERODEF}
\end{align}
Then there exists an $\varepsilon_0>0$ such that for any $0\le\varepsilon\le\varepsilon_0$ and $(\uptheta_0,{\bf V}_0)$ satisfying $\mathcal S^N(\uptheta_0,{\bf V}_0)+\vortnorm({\bf V}_0)\le\varepsilon$,
there exists a global-in-time solution to the initial value problem~\eqref{E:THETAEQUATION}--\eqref{E:THETAINITIAL} and a constant $C>0$ such that 
\be
\mathcal S^N(\uptheta,{\bf V})(\tau)  \le C \varepsilon, \ \ 0\le\tau<\infty, \label{E:GLOBALBOUND} 
\ee
and
\be
\vortnorm[{\bf V}](\tau)   \le 
\begin{cases}
C \varepsilon e^{-2\mu_0\tau} &\text{if }1<\gamma<\frac53\\
C \varepsilon (1+\tau^2) e^{-2\mu_0\tau} &\text{if } \gamma=\frac53
\end{cases} ,  \ \ 0\le\tau<\infty. \label{E:VORTICITYDECAY}
\ee
Furthermore, there exists a constant $C_*>0$
such that 
\be\label{E:EXPONENTIALDECAY}
\begin{split}
&\sum_{a+|\beta|\le N}\|\der {\bf V}\|_{1+\alpha+a,\psi}
+\sum_{|\nu|\leq N} \|\pa^\nu {\bf V}\|_{1+\alpha,1-\psi}
\le C_*\sqrt{\varepsilon} e^{-\mu_0\tau},
\end{split}
\ee
and a $\tau$-independent asymptotic attractor $\uptheta_\infty$ such that 
\be\label{E:ASYMP}
\lim_{\tau\to\infty}\left( \sum_{a+|\beta|\le N}\left\|\der\uptheta(\tau,\cdot)-\der\uptheta_\infty(\cdot)\right\|_{a+\alpha,\psi}+\sum_{|\nu|\le N}\left\|\pa^\nu\uptheta(\tau,\cdot)-\pa^\nu\uptheta_\infty(\cdot)\right\|_{\alpha,1-\psi}\right) = 0.
\ee
\end{theorem}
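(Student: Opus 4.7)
The plan is to prove Theorem~\ref{T:MAINLAGR} via a standard continuity/bootstrap argument built on top of a local well-posedness theory (in the spirit of~\cite{JaMa2015}) for the Lagrangian system~\eqref{E:THETAEQUATION}--\eqref{E:THETAINITIAL}. The essence is the a priori energy inequality~\eqref{E:ENERGYINTRO} sketched in the introduction, namely $\mathcal S^N(\tau) \le C_0 + C\int_0^\tau e^{-\mu_\ast\tau'}\mathcal S^N(\tau')\,d\tau'$, which, together with the Gronwall inequality, immediately yields the uniform bound~\eqref{E:GLOBALBOUND} and hence precludes finite-time blow-up. I would first reduce matters to showing this inequality under the bootstrap assumption $\mathcal S^N(\tau)+\vortnorm[{\bf V}](\tau)\le \varepsilon^{1/2}$, then split the argument into three independent blocks: (a) vorticity transport, (b) main energy estimate, (c) post-processing into decay and convergence to the attractor.

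For (a), I would exploit the structural observation of the introduction: because $\Lambda\nabla(\tilde\rho^{\gamma-1})$ is a $\Lambda$-gradient, its $\text{curl}_\Lambda$ vanishes, so the Lagrangian pullback $\text{Curl}_{\Lambda\A}\uptheta$ satisfies a transport-type equation whose top-order part decouples from the density/pressure dynamics. Differentiating in $\der$ (resp.\ $\pa^\nu$) and integrating against itself with the weight $w^{\alpha+1+a}\psi$ (resp.\ $w^{\alpha+1}(1-\psi)$), one reads off an ODE in $\tau$ for $\vortnorm[{\bf V}]$ whose linear coefficient is governed by $\mu_\tau/\mu \to \mu_1$. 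Integration gives exactly the rate $e^{-2\mu_0\tau}$ when $\gamma<5/3$, while the resonance at $\gamma=5/3$ produces the extra $(1+\tau^2)$ factor in~\eqref{E:VORTICITYDECAY}; the nonlinear commutators and lower-order terms are then absorbed into the smallness of $\mathcal S^N$ via the Hardy--Sobolev embeddings of Appendix~\ref{A:HARDY}.

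For (b), I would apply $\der$ (near $\pa\Omega$, with the cutoff $\psi$) and $\pa^\nu$ (in the interior, with $1-\psi$) to~\eqref{E:THETAEQUATION} and test against $\Lambda^{-1}\der\pa_\tau\uptheta$ and $\Lambda^{-1}\pa^\nu\pa_\tau\uptheta$ respectively. Diagonalizing $\Lambda$ as in Lemma~\ref{L:KEYLEMMA} and applying the Leibniz rule to the $\Lambda\nabla$ term, one isolates the coercive leading-order contribution $\frac12 \frac{d}{d\tau}(\text{energy}) + (c-\frac12\mu(\gamma))(\text{energy})$ as in~\eqref{E:TAUNORM}, plus an error proportional to $\text{Curl}^*_{\Lambda\A}(\der\uptheta)$ that is controlled by (a). The damping sign $c-\frac12\mu(\gamma)\ge 0$ is precisely the algebraic manifestation of the assumption $\gamma\le 5/3$; nonlinear remainders coming from $\A^k_j \J^{-1/\alpha}-\delta^k_j$ expand around the identity as $D\uptheta+O(|D\uptheta|^2)$, and the $w$-weighted structure of the norm~\eqref{E:SNORM} guarantees closure via the commutator identities of Appendix~\ref{A:COMM} and the weighted Sobolev embeddings of Appendix~\ref{A:HARDY}. \emph{I expect this step to be the main obstacle:} the algebra of the anti-symmetrization has to line up exactly with the weighting pattern (each normal derivative costs one extra power of $w$) or else boundary terms at $\pa\Omega$ and top-order commutators fail to close.

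For (c), once $\mathcal S^N(\tau)\le C\varepsilon$ is proved globally, the exponential decay~\eqref{E:EXPONENTIALDECAY} is essentially a read-off: the norm~\eqref{E:SNORM} contains $\mu^{3\gamma-3}\|\der{\bf V}\|_{a+\alpha,\psi}^2$ and $\mu^{3\gamma-3}\|\pa^\nu{\bf V}\|_{\alpha,1-\psi}^2$, and by Lemma~\ref{L:GAMMASTARASYMP} of Appendix~\ref{A:ASYMPTOTIC} $\mu(\tau)\sim e^{\mu_1\tau}$, so dividing by $\mu^{3\gamma-3}$ yields the rate $e^{-2\mu_0\tau}$ in~\eqref{E:EXPONENTIALDECAY}. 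Finally, for the asymptotic profile, I would write
\begin{equation*}
\uptheta(\tau_2,\cdot)-\uptheta(\tau_1,\cdot)=\int_{\tau_1}^{\tau_2}{\bf V}(\sigma,\cdot)\,d\sigma,
\end{equation*}
and estimate the right-hand side in the relevant weighted norms by $\int_{\tau_1}^{\tau_2} C_\ast\sqrt{\varepsilon}\,e^{-\mu_0\sigma}\,d\sigma\to 0$ as $\tau_1\to\infty$. This Cauchy property produces a limit $\uptheta_\infty$ in each of the norms appearing in~\eqref{E:ASYMP}, establishing convergence to an asymptotic attractor and completing the proof.
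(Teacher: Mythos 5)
Your proposal follows essentially the same route as the paper: $\Lambda$-curl transport for vorticity (exploiting that $\text{curl}_{\Lambda}$ annihilates the $\Lambda$-gradient), weighted energy estimates via the diagonalization of $\Lambda$ in Lemma~\ref{L:KEYLEMMA} and the antisymmetrization~\eqref{E:ENERGYCRUCIAL}, a continuity bootstrap, and a fundamental-theorem-of-calculus argument for the asymptotic attractor, with only cosmetic deviations — the paper solves the vorticity transport equation explicitly by Duhamel (Lemma 3.1, formulas~\eqref{E:CURL3}--\eqref{E:CURLTHETA}) rather than via an energy estimate on the curl, and closes the bootstrap by a time-slicing argument (choosing $\sigma=T/2$ large enough that the integral $\int_\sigma^\tau e^{-\mu_0\tau'/2}\,d\tau'$ is small) rather than by Gronwall. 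Two small slips worth correcting: the unsquared norms in~\eqref{E:EXPONENTIALDECAY} decay at $e^{-\mu_0\tau}$, not $e^{-2\mu_0\tau}$ (the $2\mu_0$ rate is for $\|\cdot\|^2$ after cancelling the $\mu^{3\gamma-3}$ weight in~\eqref{E:SNORM}), and the rate $e^{-2\mu_0\tau}$ for $\vortnorm[{\bf V}]$ is driven by the decay $\|D{\bf V}\|_{L^\infty}\lesssim e^{-\mu_0\tau}\sqrt{\norm}$ of the source terms rather than directly by the damping coefficient $\mu_\tau/\mu\to\mu_1$, the resonance at $\gamma=\frac53$ being the degeneracy $\mu_0=\mu_1$.
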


%%%%%%%%%%%%%%%%%%%%%%%%%%%%%%%%%%%%%%

\begin{remark}
The exponential-in-$\tau$ decay stated in~\eqref{E:EXPONENTIALDECAY} implies an algebraic decay statement in $t$-variable with an upper bound 
$\frac{C_*}{(1+t)^{\mu_0}}$ for some $C_*>0$. Note that the larger $\gamma\in(1,\frac53]$ is, the faster the equilibration rate is. The same applies to the vorticity decay in \eqref{E:VORTICITYDECAY} except for $\gamma=\frac53$ where we get $\frac{\ln (1+t)}{(1+t)}$ instead of $\frac{1}{1+t}$. 
\end{remark}

\begin{remark} 
The assumption $N\geq 2\lceil \alpha \rceil +12=2\lceil \frac1{\gamma-1} \rceil +12$ is not an optimal lower bound for the number of derivatives necessary to prove Theorem~\ref{T:MAINLAGR}. 
We have not tried to find an optimal $N$, as we  
are primarily interested in displaying the phenomenology and developing a robust approach to the construction of global-in-time solutions to the E$_\gamma$-system. We do observe that the larger $\gamma\in(1,\frac53]$ is, the less smoothness is required in our norm.
\end{remark}

\begin{remark}[Constants $\mu_0$ and $\mu_1$]
As a simple consequence of the ODE analysis in~\cite{Sideris}, the limit defining $\mu_1$ in the statement of the theorem is well-defined,
see Lemma~\ref{L:GAMMASTARASYMP} for more details.  
Note that by definition~\eqref{E:SNORM} of $\norm$, we have that 
\begin{align}
\sum_{a+|\beta|\le N}\|\der {\bf V}\|_{1+\alpha+a,\psi}
+\sum_{|\nu|\leq N} \|\pa^\nu {\bf V}\|_{1+\alpha,1-\psi}
\lesssim e^{-\mu_0\tau}\norm(\tau)^{\frac12}. \label{E:DECAYBOUND}
\end{align}
Bound~\eqref{E:DECAYBOUND} will be commonly used in our analysis. We also observe that for any $\gamma\in(1,\frac53]$ the inequality $0<\mu_0=\mu_0(\gamma)\le\mu_1$ holds with
equality if and only if $\gamma=\frac53$.
\end{remark}

\begin{remark}[Initial density $\rho_0$]
Note that the density $\rho_0$ of the initial gas configuration relates to the density of the background affine motion via
the composition
\[
\rho_0(x) = \rho_A((\zeta_A(0)\circ\eta_0)^{-1}(x))\det[D(\zeta_A(0)\circ\eta_0)]^{-1},
\]
where the transformation $\eta_0:\Omega\to\Omega(0)$ is very close to the identity map and $\zeta_A$ is the flow map associated with the affine motion~\eqref{E:affine}.
\end{remark}
%%%%%%%%%%%%%%%%%%%%%%%%%%%%%%%%%%%%%%

\subsubsection*{Local well-posedness}

The local-well-posedness of~\eqref{E:THETAEQUATION}--\eqref{E:THETAINITIAL} follows from an adaption of the argument in~\cite{JaMa2015}. In fact, the method developed  in~\cite{JaMa2015}  ensures the well-posedness upon having a good a priori estimates for the full energy and the curl energy of the Euler equations via a duality argument. Although \eqref{E:THETAEQUATION} is different from the Euler equations due to the presence of the time weights such as $\Lambda$ (note that these weights are prescribed functions independent of the solutions), the estimates in Section \ref{S:VORTICITY} and Section \ref{S:ENERGY}, which will be obtained through several new ideas, give rise to the desired bounds on the energy functionals $\norm$ \eqref{E:SNORM} and $\vortnorm$ \eqref{E:VORTNORMDEF} that are sufficient to apply the methodology of \cite{JaMa2015} by using $\norm$ and  $\vortnorm$:  

\begin{theorem}[Local well-posedness \cite{JaMa2015}]\label{T:LWP} Let $N\geq 2\lceil \alpha \rceil +12$ be given. Then for given initial data $(\uptheta_0,{\bf V}_0)$ with $\norm(\uptheta_0, {\bf V}_0) + \vortnorm({\bf V}_0)<\infty$, there exists a $T>0$ and a unique solution $(\uptheta(\tau), {\bf V}(\tau)):\Omega \rightarrow \mathbb R^3\times \mathbb R^3$ for each $\tau\in [0,T]$ to~\eqref{E:THETAEQUATION}--\eqref{E:THETAINITIAL} such that $\norm (\uptheta, {\bf V})(\tau) + \vortnorm[{\bf V}](\tau) \leq 2(\norm(\uptheta_0, {\bf V}_0) + \vortnorm({\bf V}_0))$ for all $\tau\in[0,T]$. Moreover, the map $[0,T]\ni\tau\mapsto\norm(\tau)\in\mathbb R_+$ is continuous.
\end{theorem}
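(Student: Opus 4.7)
The plan is to follow the methodology of Jang--Masmoudi~\cite{JaMa2015}, where local existence for the physical vacuum Euler system is reduced to obtaining high-order weighted energy and vorticity a priori estimates, which are then combined with a duality argument to convert top-order coercivity into a genuine Sobolev regularity bound on the flow map. The Lagrangian system~\eqref{E:THETAEQUATION} differs from the standard case only through the prescribed, smooth, $\tau$-dependent coefficients $\Lambda(\tau)$, $\Gamma^*(\tau)$, and $\mu_\tau/\mu$; on any compact interval $[0,T]$ all of these, together with $\Lambda^{-1}$, are uniformly bounded, so the structural framework of~\cite{JaMa2015} transfers.

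First I would construct a sequence of approximate solutions by regularizing~\eqref{E:THETAEQUATION}, for instance by introducing a small artificial viscosity on the velocity equation (or by smoothing the degenerate weight $w$ away from its zero set), and solving each regularized problem by contraction mapping on a short time interval. The regularization removes the vacuum degeneracy and yields classically smooth solutions. Next I would close uniform-in-regularization a priori bounds for $\norm$ and $\vortnorm$ by invoking the energy and vorticity estimates developed in Sections~\ref{S:VORTICITY}--\ref{S:ENERGY}. For short time the exponential $\tau$-weights appearing in~\eqref{E:SNORM}--\eqref{E:VORTNORMDEF} are equivalent to positive constants, so they play no role in the local theory; the estimates reduce to a differential inequality of the schematic form
\begin{equation*}
\frac{d}{d\tau}\bigl(\norm(\tau) + \vortnorm[{\bf V}](\tau)\bigr) \;\leq\; P\bigl(\norm(\tau) + \vortnorm[{\bf V}](\tau)\bigr),
\end{equation*}
for some polynomial $P$ with coefficients depending on $T$, and a standard continuity argument yields a time $T^*>0$, depending only on the initial energy, on which the factor of $2$ in the theorem statement holds.

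The main obstacle is that the duality argument of~\cite{JaMa2015}---which produces Sobolev control of $\uptheta$ itself (as opposed to $\uptheta_\tau$) at the top order---relies on having a genuine transport equation for the curl of the velocity in order to separate curl-free and curl-carrying contributions. In the present setting $\mathrm{curl}(\Lambda\nabla f)\neq 0$, so the standard curl does not decouple. The resolution, already flagged in the introduction, is to replace $\mathrm{curl}$ by the modified operator $\text{Curl}_{\Lambda\A}$ from~\eqref{E:BIGCURLDEF}, which annihilates $\Lambda\nabla$ and whose Lagrangian pull-back satisfies a good transport equation; this is precisely why the a priori bounds on $\vortnorm$ close independently of the rest of the energy. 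Since $\Lambda(\tau)$ is symmetric positive definite with uniformly bounded inverse on $[0,T]$, the coercivity of the $\Lambda\A^\top\nabla$-based quadratic form is quantitatively equivalent to that in~\cite{JaMa2015}, so the duality/Hardy--Sobolev machinery of Appendix~\ref{A:HARDY} applies verbatim and delivers the missing Sobolev information on $\uptheta$.

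Finally, passage to the limit along the regularization by weak-$*$ compactness, uniqueness via an energy inequality for the difference of two solutions (using the positive definiteness of $\Lambda$ once more and the cancellation provided by $\text{Curl}_{\Lambda\A}$ on the difference), and continuity of the map $\tau \mapsto \norm(\tau)$ all follow exactly as in~\cite{JaMa2015}. The weighting pattern of~\eqref{E:SNORM}---one extra power of $w$ per normal derivative---is preserved by the scheme, which is what allows the boundary $\pa\Omega=\mathbb{S}^2$ to be handled without boundary contributions in the integration by parts.
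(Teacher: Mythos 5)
Your proposal is correct and follows essentially the same route as the paper, which itself does not give a self-contained proof but defers to an adaptation of the Jang--Masmoudi local theory, pointing to the a priori bounds of Sections~\ref{S:VORTICITY}--\ref{S:ENERGY} plus their duality argument as the essential inputs. You have correctly identified the three nontrivial adaptations: that $\Lambda$, $\Gamma^*$, $\mu_\tau/\mu$ are prescribed and uniformly controlled on compact $\tau$-intervals; that $\text{Curl}_{\Lambda\A}$ must replace the ordinary curl so the vorticity part decouples; and that the uniform positive definiteness of $\Lambda$ makes the coercivity and Hardy--Sobolev machinery quantitatively equivalent to the original setting.
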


For the rest of the article, we will use this well-posedness result to guarantee the existence of a unique solution $(\uptheta, {\bf V}):\Omega \rightarrow \mathbb R^3\times \mathbb R^3$ with $\norm(\uptheta, {\bf V}) +\vortnorm[{\bf V}] <\infty$ on a finite interval $[0,T]$ where $T>0$ is fixed.  

\subsubsection*{A priori assumptions}

It is convenient to make the following a priori assumptions:  
\be\label{E:assumption}
\norm(\tau) < \frac 13
\ee
as well as 
\be
\label{E:APRIORI}
\|\A - \text{{\bf Id}}\|_{W^{1,\infty}(\Omega)}<\frac13, \quad 
\|D\uptheta\|_{W^{1,\infty}(\Omega)}<\frac13,\quad 
\|\mathscr J - 1\|_{W^{1,\infty}(\Omega)}<\frac13 
\ee 
for each $\tau\in [0,T]$ where $T>0$ is fixed.  The latter assumption ensures the non-degeneracy of the flow map. 
For an $\varepsilon_0\ll1$ in Theorem~\ref{T:MAINLAGR} the assumptions \eqref{E:assumption} and \eqref{E:APRIORI}  are initially true, and by the local well-posedness theory they remain true on some short time interval. By means of a continuity argument we will show in the proof of Theorem~\ref{T:MAINLAGR} (Section~\ref{S:MAINPROOF}) that our a-priori assumptions are in fact {\em improved}, thereby justifying them in the first place.

%%%%%%%%%%%%%%%%%%%%%%%%%%%%%%%%%%%%%%
%%%%%%%%%%%%%%%%%%%%%%%%%%%%%%%%%%%%%%

\section{Vorticity estimates}\label{S:VORTICITY}

In this section we exploit an effective decoupling in the modified Euler system~\eqref{E:THETAEQUATION} which allows us to 
bound the modified vorticity tensors $\text{Curl}_{\Lambda\A}{\bf V}$ and $\text{Curl}_{\Lambda\A}\uptheta$ in terms of the high-order norm $\norm$.
The main result of this section is given in Proposition~\ref{P:VORTICITYBOUNDS}, but to achieve it we first prove a lemma which demonstrates the above mentioned decoupling 
and gives us a simplified formula for the modified vorticity.

\begin{lemma} 
Let $(\uptheta,{\bf V}):\Omega\to\mathbb R^3\times \mathbb R^3$ be a unique solution to~\eqref{E:THETAEQUATION}--\eqref{E:THETAINITIAL} defined on some time interval $[0,T]$.
For any $\tau\in[0,T]$ the modified vorticity tensors $\text{{\em Curl}}_{\Lambda\A}{\bf V}$ and $\text{{\em Curl}}_{\Lambda\A}\uptheta$ satisfy 
\begin{align}
\text{{\em Curl}}_{\Lambda\A}{\bf V} (\tau)&= \frac{\mu(0)\text{{\em Curl}}_{\Lambda\A}{\bf V} (0)}{\mu} \notag \\
&+\frac{1}{\mu} \int_0^\tau \mu [\pa_\tau, \text{{\em Curl}}_{\Lambda\A}] {\bf V}d\tau'
-\frac{2}{\mu}\int_0^\tau \mu\text{{\em Curl}}_{\Lambda\A}\left(\Gamma^\ast{\bf V}\right)d\tau' \label{E:CURL3}
\end{align}
and 
\begin{align}
\text{{\em Curl}}_{\Lambda\A}\uptheta (\tau) & = \text{{\em Curl}}_{\Lambda\A}\uptheta (0)+\mu(0)\text{{\em Curl}}_{\Lambda\A}{\bf V} (0) \int_0^\tau \frac1{\mu(\tau')}\,d\tau' 
+ \int_0^\tau[\pa_\tau, \text{{\em Curl}}_{\Lambda\A}] \uptheta(\tau')\,d\tau' \notag \\
&+\int_0^\tau\frac{1}{\mu(\tau')} \int_0^{\tau'} \mu(\tau'') [\pa_\tau, \text{{\em Curl}}_{\Lambda\A}] {\bf V} \, d\tau''\,d\tau'
-\int_0^\tau\frac{2}{\mu(\tau')}\int_0^{\tau'} \mu(\tau'')\text{{\em Curl}}_{\Lambda\A}\left(\Gamma^\ast{\bf V}\right)\, d\tau''\,d\tau' \label{E:CURLTHETA}
\end{align}
\end{lemma}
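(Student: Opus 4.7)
The plan is to apply the modified curl operator $\text{Curl}_{\Lambda\A}$ directly to the Lagrangian momentum equation \eqref{E:VELOCITYLAGR2}, exploit the fact that it annihilates the two pressure-like terms on the right-hand side, and then treat the resulting identity as a linear first-order ODE in $\tau$ for $\mu\,\text{Curl}_{\Lambda\A}{\bf V}$, which is integrated to yield \eqref{E:CURL3}; formula \eqref{E:CURLTHETA} then follows by integrating once more in $\tau$ using $\uptheta_\tau={\bf V}$.

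The heart of the argument is the annihilation step. For the enthalpy term, in Eulerian variables $\Lambda\A^\top\nabla(f^{\gamma-1})$ is $\Lambda\nabla_x(\tilde\rho^{\gamma-1})$, and since $\Lambda$ is symmetric and constant in space,
\begin{equation*}
\bigl[\text{curl}_{\Lambda}(\Lambda\nabla\phi)\bigr]^i = \epsilon_{ijk}\Lambda_{jm}\Lambda_{kn}\phi_{,mn}=0
\end{equation*}
by the antisymmetry of $\epsilon_{ijk}$ in $(j,k)$ and symmetry of the Hessian and of $\Lambda$; pulling back to Lagrangian coordinates and using that the operators $\A^s_m\pa_s$ correspond precisely to Eulerian partials yields $\text{Curl}_{\Lambda\A}(\Lambda\A^\top\nabla(f^{\gamma-1}))=0$. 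For the term $\delta\Lambda\eta$ I exploit the algebraic identity $\A^s_m\eta^n,_s=\delta^n_m$ (a restatement of $D\eta\cdot\A=\text{Id}$), which gives
\begin{equation*}
\Lambda_{jm}\A^s_m(\Lambda_{in}\eta_n),_s = \Lambda_{jm}\Lambda_{in}\delta^n_m=\Lambda_{jm}\Lambda_{im};
\end{equation*}
this tensor is symmetric in $(i,j)$, so antisymmetrization in the definition \eqref{E:BIGCURLDEF} kills it. This dual annihilation is the step I expect to be the main technical point, since it hinges on both the symmetry of $\Lambda$ and the specific pairing of $\A$ with $D\eta$.

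With both terms annihilated, \eqref{E:VELOCITYLAGR2} (after dividing by the nonvanishing factor $\mu^{3\gamma-4}$) gives
\begin{equation*}
\text{Curl}_{\Lambda\A}\bigl[\pa_\tau(\mu\eta_\tau)+2\mu\Gamma^\ast\eta_\tau\bigr]=0.
\end{equation*}
Because $\mu$ and $\Lambda$ depend only on $\tau$ and $\text{Curl}_{\Lambda\A}$ is a purely spatial operator, $\mu$ commutes through $\text{Curl}_{\Lambda\A}$, and a direct computation shows $[\pa_\tau,\text{Curl}_{\Lambda\A}](\mu{\bf V})=\mu[\pa_\tau,\text{Curl}_{\Lambda\A}]{\bf V}$ (the $\mu_\tau{\bf V}$ pieces cancel). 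Rearranging yields the transport identity
\begin{equation*}
\pa_\tau\bigl(\mu\,\text{Curl}_{\Lambda\A}{\bf V}\bigr)=\mu\,[\pa_\tau,\text{Curl}_{\Lambda\A}]{\bf V}-2\mu\,\text{Curl}_{\Lambda\A}(\Gamma^\ast{\bf V}).
\end{equation*}
Integrating from $0$ to $\tau$ and dividing by $\mu(\tau)$ produces \eqref{E:CURL3}.

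Finally, for \eqref{E:CURLTHETA}, I use $\uptheta_\tau={\bf V}$ and write
\begin{equation*}
\pa_\tau\,\text{Curl}_{\Lambda\A}\uptheta = \text{Curl}_{\Lambda\A}{\bf V}+[\pa_\tau,\text{Curl}_{\Lambda\A}]\uptheta,
\end{equation*}
integrate in $\tau$, and substitute the already-derived expression \eqref{E:CURL3} for $\text{Curl}_{\Lambda\A}{\bf V}(\tau')$; collecting the resulting iterated integrals (no Fubini interchange is needed, as the formula in the statement keeps them nested) delivers \eqref{E:CURLTHETA} term by term. The only potentially delicate bookkeeping is making sure every commutator and $\mu$-weight ends up in the right place, which is automatic once the identities above are in hand.
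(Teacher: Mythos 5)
Your proof is correct and follows essentially the same route as the paper: both apply $\text{Curl}_{\Lambda\A}$ to the momentum equation, use the annihilation of the $\Lambda\nabla_\eta(f^{\gamma-1})$ and $\delta\Lambda\eta$ terms to reduce to the transport identity $\pa_\tau(\mu\,\text{Curl}_{\Lambda\A}{\bf V})=\mu[\pa_\tau,\text{Curl}_{\Lambda\A}]{\bf V}-2\mu\,\text{Curl}_{\Lambda\A}(\Gamma^*{\bf V})$, and then integrate once for \eqref{E:CURL3} and a second time for \eqref{E:CURLTHETA}. The only difference is cosmetic bookkeeping of the $\mu$-factor (you fold it in before applying the curl operator, the paper after), and you spell out the two annihilation computations that the paper leaves implicit.
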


\begin{proof}
Recalling that 
$
{\bf V} = \uptheta_\tau,
$
 equation~\eqref{E:VELOCITYLAGR2} is best rewritten in the form
\begin{align}\label{E:CURL0}
\mu^{3\gamma-3}\left({\bf V}_\tau + \frac{\mu_\tau}{\mu}{\bf V}+2\Gamma^\ast {\bf V}\right) 
+ \frac\gamma{\gamma-1}\Lambda\nabla_\eta(f^{\gamma-1}) +\delta\Lambda{\bf \eta}=0
\end{align}
Recalling the definition \eqref{E:BIGCURLDEF}, 
apply $\text{Curl}_{\Lambda\A}$ to~\eqref{E:CURL0} and obtain
\begin{align}\label{E:CURL1}
\mu^{3\gamma-3}\left(\text{Curl}_{\Lambda\A}{\bf V}_\tau + \frac{\mu_\tau}{\mu}\text{Curl}_{\Lambda\A}{\bf V}+2\text{Curl}_{\Lambda\A}\left(\Gamma^\ast{\bf V}\right) \right)
 +\delta\text{Curl}_{\Lambda\A}\Lambda{\bf \eta}=0
\end{align}
Note that 
\[
\text{Curl}_{\Lambda\A}\Lambda{\bf \eta} = 0
\]
and therefore~\eqref{E:CURL1} gives
\be\label{E:CURLBASIC}
\text{Curl}_{\Lambda\A}{\bf V}_\tau + \frac{\mu_\tau}{\mu}\text{Curl}_{\Lambda\A}{\bf V}+2\text{Curl}_{\Lambda\A}\left(\Gamma^\ast{\bf V}\right)=0
\ee

To obtain the identities~\eqref{E:CURL3} and~\eqref{E:CURLTHETA} we shall integrate~\eqref{E:CURLBASIC} with respect to $\tau$. 
We rewrite the first term as
\be
\text{Curl}_{\Lambda\A} {\bf V}_\tau  =\pa_\tau\left( \text{Curl}_{\Lambda\A}{\bf V}\right) - [\pa_\tau, \text{Curl}_{\Lambda\A}] {\bf V}
\ee
where 
\be\label{E:TAUCURLCOMMUTATOR}
[\pa_\tau, \text{Curl}_{\Lambda\A}] F^k_j = \pa_\tau \left(\Lambda_{jm}\A^s_m\right) F,_s^k - \pa_\tau \left(\Lambda_{km}\A^s_m\right) F,_s^j 
\ee
We may therefore rewrite~\eqref{E:CURLBASIC} in the form 
\be\label{E:CURL3.0}
\pa_\tau \left(\mu  \text{Curl}_{\Lambda\A}{\bf V}   \right) = \mu [\pa_\tau, \text{Curl}_{\Lambda\A}] {\bf V}  - 2 \mu \text{Curl}_{\Lambda\A}\left(\Gamma^\ast{\bf V}\right).
\ee
Integrating \eqref{E:CURL3.0} in $\tau$ we obtain~\eqref{E:CURL3}.
Using the commutator formula again 
we may write~\eqref{E:CURL3} in the form
\begin{align}
\pa_\tau\text{Curl}_{\Lambda\A}\uptheta (\tau) & = \frac{\mu(0)\text{Curl}_{\Lambda\A}{\bf V} (0)}{\mu} + [\pa_\tau, \text{Curl}_{\Lambda\A}] \uptheta \notag \\
&+\frac{1}{\mu} \int_0^\tau \mu [\pa_\tau, \text{Curl}_{\Lambda\A}] {\bf V}d\tau'
-\frac{2}{\mu}\int_0^\tau \mu\text{Curl}_{\Lambda\A}\left(\Gamma^\ast{\bf V}\right)d\tau'
\end{align}
and therefore~\eqref{E:CURLTHETA} follows by integrating the previous identity in $\tau$.
\end{proof}

\begin{lemma}\label{L:COMMUTATORBOUNDS}
Let $(\uptheta,{\bf V}):\Omega\to\mathbb R^3\times \mathbb R^3$ be a unique solution to~\eqref{E:THETAEQUATION}--\eqref{E:THETAINITIAL} defined on some time interval $[0,T]$
and satisfying the a priori assumptions~\eqref{E:assumption}--\eqref{E:APRIORI}.
Then for any $\tau\in [0,T]$ 
\begin{align}
&\sum_{a+|\beta|\le N} \Big\|\left[\der,\text{{\em Curl}}_{\Lambda\A}\right]{\bf V}(\tau)\Big\|_{1+\alpha+a,\psi}^2
+ \sum_{|\nu|\le N} \Big\|\left[\car,\text{{\em Curl}}_{\Lambda\A}\right]{\bf V}(\tau)\Big\|_{1+\alpha,1-\psi}^2 \notag  \\
& \ \ \ \ \  \lesssim e^{-2\mu_0\tau}\norm(\tau)\left(1+P(\norm(\tau))\right) \label{E:COMMUTATORBOUND1}  \\
&\sum_{a+|\beta|\le N} \Big\|\left[\der,\text{{\em Curl}}_{\Lambda\A}\right]\uptheta(\tau) \Big\|_{1+\alpha+a,\psi}^2
+ \sum_{|\nu|\le N} \Big\|\left[\car,\text{{\em Curl}}_{\Lambda\A}\right]\uptheta(\tau)\Big\|_{1+\alpha,1-\psi}^2 \notag  \\
& \ \ \ \ \  \lesssim \norm(0)+ \kappa\norm(\tau) +(1+P(\norm(\tau)))\int_0^\tau e^{-\mu_0\tau'} \norm(\tau')\,d\tau'  \label{E:COMMUTATORBOUND2}
\end{align}
where the polynomial $P$ is of degree at least 1, $0<\kappa\ll 1$ is a small constant, and $\mu_0$ is defined in~\eqref{E:MUZERODEF}.
\end{lemma}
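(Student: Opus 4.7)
The plan is to expand both commutators by Leibniz and harvest the exponential decay of spatial derivatives of ${\bf V}$ provided by~\eqref{E:DECAYBOUND}; the bound for $\uptheta$ will then be obtained by time-integration using $\uptheta(\tau)=\uptheta_0+\int_0^\tau {\bf V}(\tau')\,d\tau'$ and an analogous representation for $\A$. Since $\Lambda=\Lambda(\tau)$ is spatially constant, one directly checks that
\[
[\der,\text{Curl}_{\Lambda\A}]F^i_j = \Lambda_{jm}\,[\der,\A^s_m\pa_s]F^i - \Lambda_{im}\,[\der,\A^s_m\pa_s]F^j,
\]
and expanding by Leibniz, together with Lemma~\ref{L:COMMUTATORS} to commute $\der=\pa_r^a\sn^\beta$ past $\pa_s$, produces a finite sum of schematic terms $(\der_1\A)(\der_2\pa F)$ in which at least one derivative falls on $\A$. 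The a-priori bound $\|D\uptheta\|_{W^{1,\infty}}<1/3$ justifies the Neumann expansion of $\A=(\text{{\bf Id}}+D\uptheta)^{-1}$, so every spatial derivative of $\A$ becomes a polynomial in $\A$ multiplied by derivatives of $D\uptheta$; the same reduction applies to $\car$ in the interior region.

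First I would prove~\eqref{E:COMMUTATORBOUND1}. Each term in the Leibniz expansion carries at least one derivative of $F={\bf V}$, and by~\eqref{E:DECAYBOUND} every such factor is bounded in the weighted $L^2$ norms (with weights $w^{1+\alpha+a}\psi$ near the boundary and $w^{1+\alpha}(1-\psi)$ in the interior) by $e^{-\mu_0\tau}\norm(\tau)^{1/2}$. After placing the highest-order factor in the weighted $L^2$ norm matching the left-hand side of~\eqref{E:COMMUTATORBOUND1} and all remaining factors in $L^\infty$ via the weighted Hardy–Sobolev embeddings of Appendix~\ref{A:HARDY}, each product is controlled by $e^{-2\mu_0\tau}\norm(\tau)\bigl(1+P(\norm(\tau))\bigr)$, where the polynomial $P$ records the finite number of $\A$- and $D\uptheta$-factors produced by the Neumann expansion.

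For~\eqref{E:COMMUTATORBOUND2}, I would substitute $\uptheta(\tau)=\uptheta_0+\int_0^\tau {\bf V}(\tau')\,d\tau'$ and, using $\pa_\tau\A^s_m=-\A^s_\ell{\bf V}^\ell,_q\A^q_m$, write $\A(\tau)=\A(0)+\int_0^\tau\pa_\tau\A(\tau')\,d\tau'$ into every $\uptheta$- and $\A$-factor appearing in the commutator. This decomposes the result into three pieces: (i) an initial-data contribution bounded by $\norm(0)$; (ii) a $\tau$-integral whose integrand has the schematic structure of Step~2 and is bounded by $e^{-\mu_0\tau'}\norm(\tau')^{1/2}(1+P(\norm(\tau')))$, from which Minkowski and the Cauchy–Schwarz bound
\[
\biggl(\int_0^\tau e^{-\mu_0\tau'/2}\norm(\tau')^{1/2}\,d\tau'\biggr)^2 \lesssim \int_0^\tau e^{-\mu_0\tau'}\norm(\tau')\,d\tau'
\]
yield the final integral on the right-hand side; and (iii) a residual term corresponding to the extreme case in which all $N$ derivatives of $\der$ land on a single $\A$-factor (hence on $D\uptheta$), yielding $D^{N+1}\uptheta$ in weighted $L^2$ (controlled by the $\nabla_\eta\der\uptheta$-piece of $\norm(\tau)$ with weight $w^{a+\alpha+1}$) multiplied by the surviving low-order factor $D\uptheta$ placed in $L^\infty$ and estimated by $\|D\uptheta\|_{W^{1,\infty}}<1/3$ via~\eqref{E:APRIORI}; this produces the term $\kappa\norm(\tau)$ and is precisely the obstruction to replacing $\kappa\norm(\tau)$ by a decaying integral.

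The main technical obstacle is the bookkeeping of the weight $w$ through the Leibniz expansion: near the boundary $\{\psi=1\}$ the split between radial and tangential orders in $\der=\pa_r^a\sn^\beta$ must be tracked through each product decomposition so that the weight $w^{a+\alpha}$ (or $w^{a+\alpha+1}$ whenever a $\nabla_\eta$-factor appears) aligns with the target weight $w^{1+\alpha+a}$ on the left-hand sides of~\eqref{E:COMMUTATORBOUND1}--\eqref{E:COMMUTATORBOUND2}; this alignment dictates precisely which factors may be moved to $L^\infty$ by Hardy–Sobolev and which must be kept in the weighted $L^2$ space carrying the decay or the a-priori smallness.
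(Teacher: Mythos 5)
Your overall plan matches the paper's: expand $[\der,\text{Curl}_{\Lambda\A}]$ by Leibniz (the top-order piece cancels), express spatial derivatives of $\A$ via derivatives of $D\uptheta$ (the paper does this through the explicit formulas~\eqref{E:TOPORDER1}--\eqref{E:TOPORDER2} rather than a Neumann series, but the content is the same), and then split each product between weighted $L^2$ and $L^\infty$ via the embeddings of Appendix~\ref{A:HARDY}, using~\eqref{E:DECAYBOUND} to harvest the decay from the ${\bf V}$-factor. For~\eqref{E:COMMUTATORBOUND1} this is exactly the paper's Step, and for~\eqref{E:COMMUTATORBOUND2} the fundamental-theorem-of-calculus substitution is also the right idea.

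There is, however, one substantive misstatement in your handling of item~(iii). You write that when all derivatives of $\der$ fall on $\A$, the surviving factor $D\uptheta$ must be estimated by the a-priori bound $\|D\uptheta\|_{W^{1,\infty}}<\tfrac13$ and that this is ``precisely the obstruction to replacing $\kappa\norm(\tau)$ by a decaying integral.'' That is not correct: the paper applies the fundamental theorem of calculus to exactly this $L^\infty$-factor,
\begin{equation*}
\|D\uptheta\|_{L^\infty(\Omega)} \le \|D\uptheta_0\|_{L^\infty(\Omega)} + \int_0^\tau \|D{\bf V}\|_{L^\infty(\Omega)}\,d\tau' \lesssim \sqrt{\norm(0)} + \int_0^\tau e^{-\mu_0\tau'}\sqrt{\norm(\tau')}\,d\tau',
\end{equation*}
and then uses Cauchy--Schwarz in $\tau$, obtaining
\begin{equation*}
\|A_\uptheta\|_{1+\alpha+a,\psi}^2 \lesssim \norm(\tau)\Bigl(\norm(0) + \int_0^\tau e^{-\mu_0\tau'}\norm(\tau')\,d\tau'\Bigr),
\end{equation*}
which, after using the a-priori smallness of $\norm(\tau)$, already lies inside $\norm(0)+\int_0^\tau e^{-\mu_0\tau'}\norm(\tau')\,d\tau'$ with no $\kappa\norm$ needed. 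The point you miss is that the FTC substitution should be applied to the \emph{low-order $L^\infty$-factor} $D\uptheta$ (whose $\tau$-derivative $D{\bf V}$ is in the energy space), not to the high-order $L^2$-factor $\der\A\sim D^{N+1}\uptheta$ (for which it indeed fails, since $D^{N+1}{\bf V}$ is not controlled by $\norm$). Your version, keeping $\|D\uptheta\|_{L^\infty}$ bounded by the fixed a-priori constant, gives a $\kappa$ that is hard-wired to $\approx 1/9$; since in Section~5 the paper needs $\kappa$ to be chosen small relative to other universal constants, your bound is weaker than what the paper obtains and you would have to separately argue that $1/9$ suffices (or re-calibrate the a-priori thresholds), whereas the FTC route sidesteps the issue entirely. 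Also, the displayed Cauchy--Schwarz estimate $\bigl(\int_0^\tau e^{-\mu_0\tau'/2}\norm^{1/2}\bigr)^2\lesssim\int_0^\tau e^{-\mu_0\tau'}\norm$ is off by a factor: the bounded integrand produced by the embeddings is $e^{-\mu_0\tau'}\norm^{1/2}$, and the correct splitting is $\bigl(\int_0^\tau e^{-\mu_0\tau'}\norm^{1/2}\bigr)^2 \le \bigl(\int_0^\tau e^{-\mu_0\tau'}\bigr)\bigl(\int_0^\tau e^{-\mu_0\tau'}\norm\bigr)\lesssim\int_0^\tau e^{-\mu_0\tau'}\norm$. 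These are corrections, not show-stoppers; the underlying architecture of your proof is sound.
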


\begin{proof}
{\em Proof of~\eqref{E:COMMUTATORBOUND1}.}
Recall that 
\begin{align}
&\left[\der,\text{{Curl}}_{\Lambda\A}\right]{\bf V}^k_i
 =  \der\left(\text{{Curl}}_{\Lambda\A}{\bf V}^k_i\right)-\text{{Curl}}_{\Lambda\A}(\der {\bf V})^k_i  \notag \\
& =\der\left(\Lambda_{im}\A^s_m{\bf V}^k,_s- \Lambda_{km}\A^s_m{\bf V}^i,_s\right)- \left(\Lambda_{im}\A^s_m(\der {\bf V})^k,_s- \Lambda_{km}\A^s_m(\der {\bf V})^i,_s \right)   \notag\\
&  =\Lambda_{km}\left(\A^s_m(\der {\bf V})^i,_s  - \der(\A^s_m{\bf V})^i,_s)\right)- \Lambda_{im}\left(\A^s_m(\der {\bf V})^k,_s - \der(\A^s_m{\bf V}^k,_s)\right)  \label{E:VCOMM}
\end{align}
As the two summands appearing on the right-most side above behave identically from the point-of-view of derivative count, we will restrict our attention only to the second term.
Applying the Leibniz rule,
\begin{align}
&\A^s_m(\der {\bf V})^k,_s - \der(\A^s_m{\bf V}^k,_s)  = - \underbrace{ \der\A^s_m {\bf V}^k,_s}_{=:A} \notag \\
 & - \sum_{1\le a'+|\beta'|\le N-1\atop a'\le a, \beta'\le\beta}\pa_r^{a'}\slashed\nabla^{\beta'}\A^s_m\pa_r^{a-a'}\slashed\nabla^{\beta-\beta'}{\bf V}^k,_s - 
\underbrace{\A^s_m [\der,\pa_s]{\bf V}^k}_{=:B} \label{E:CURLCOMM1}
\end{align}
where $[\der,\pa_s]$ stands for the commutator defined in~\eqref{E:DIFFCOMMUTATOR}.
Terms A and B are the two highest-order terms on the right-hand side of~\eqref{E:CURLCOMM1}.
To estimate A, we use formulas~\eqref{E:TOPORDER1} (when $\beta=(0,0,0)$) and otherwise~\eqref{E:TOPORDER2}. Assume without loss of generality that
$\beta\neq (0,0,0)$ since the other case can be done analogously. By~\eqref{E:TOPORDER2} for some $\ell\in\{1,2,3\}$ we have 
\begin{align}
A & = - \underbrace{\A^k_j(\der \uptheta^j),_m\A^m_i{\bf V}^k,_s}_{=:A_1} -  \A^k_r[\pa_r^a\sn^{\beta-e_\ell},\pa_m]\sn_\ell\uptheta^s\A^m_i \notag \\
& \ \ \ -{\bf V}^k,_s\Big(\sum_{0<a'+|\beta'|\le a+|\beta| \atop \beta'\le\beta-e_\ell }c_{a'\beta'}\pa_r^{a'}\slashed\nabla^{\beta'}(\A^k_j\A^m_i)\pa_r^{a-a'}\slashed\nabla^{\beta-e_\ell-\beta'}\left(\sn_\ell\uptheta^j\right),_m +\pa_r^a\slashed\nabla^{\beta-e_\ell}\left( \A^k_j[\pa_m,\sn_\ell]\uptheta^j\A^m_i \right) \Big). \label{E:ATERM}
\end{align}
Note that
\begin{align}
\|A_1\|_{1+\alpha+a,\psi}^2& \lesssim \|D{\bf V}\|_{L^\infty(\Omega)}^2\|\nabla_\eta\der\uptheta\|_{1+\alpha+a,\psi}^2 \notag \\
& \lesssim e^{-2\mu_0\tau} (\norm(\tau))^2\left(1+P(\norm(\tau))\right),
\end{align} 
where we have used~\eqref{embedding2} to bound $\|D{\bf V}\|_{L^\infty(\Omega)}$ by a constant multiple of $e^{-\mu_0\tau}\sqrt{\norm(\tau)}$ and the a priori bounds~\eqref{E:APRIORI}.
The remaining terms on the right-hand side of~\eqref{E:ATERM} are estimated using the standard Moser type estimates in conjunction with Proposition~\ref{P:EMBEDDING} and formula~\eqref{E:DIFFCOMMUTATOR}. 
The estimates for term $B$ and the remaining terms in~\eqref{E:CURLCOMM1} rely on the formula~\eqref{E:DIFFCOMMUTATOR} and Proposition~\ref{P:EMBEDDING}.
Namely, recalling~\eqref{E:DIFFCOMMUTATOR} we have the following schematic formula for $B:$
\begin{align*}
B =  \A^s_m \sum_{\ell=0}^{a+|\beta|}\sum_{a'+|\beta'|=\ell  \atop a'\leq a+1} C^s_{a',\beta',\ell}\pa_r^{a'}\sn^{\beta'} {\bf V}^k,
\end{align*}
for some universal functions $C_{a',\beta',\ell}^s$, smooth away from the origin. Therefore
\begin{align*}
\|B\|_{1+\alpha+a,\psi}^2 \lesssim e^{-2\mu_0\tau} \norm(\tau).
\end{align*}
Summing the above estimates we conclude that all the $\|\cdot\|_{1+\alpha+a,\psi}$-norms on the left-hand side of~\eqref{E:COMMUTATORBOUND1} are bounded by the right-hand side.
An entirely analogous strategy can be applied to $\|\cdot\|_{1+\alpha,1-\psi}$-norms to conclude the proof. \\

\noindent
{\em Proof of~\eqref{E:COMMUTATORBOUND2}.}
Using the formula~\eqref{E:VCOMM} with $\uptheta$ instead of ${\bf V}$, in analogy to~\eqref{E:CURLCOMM1} we reduce the problem to estimating the $\|\cdot\|_{1+\alpha+a,\psi}$-norm of the expression
\begin{align}
&\A^s_m(\der \uptheta)^k,_s - \der(\A^s_m\uptheta^k,_s)  = - \underbrace{ \der\A^s_m \uptheta^k,_s}_{=:A_\uptheta} \notag \\
 & - \sum_{1\le a'+|\beta'|\le N-1\atop a'\le a, \beta'\le\beta}\pa_r^{a'}\slashed\nabla^{\beta'}\A^s_m\pa_r^{a-a'}\slashed\nabla^{\beta-\beta'}\uptheta^k,_s - 
\underbrace{\A^s_m [\der,\pa_s]\uptheta^k}_{=:B_\uptheta}, \label{E:CURLCOMM2}
\end{align}

We illustrate the proof by estimating the term $A_\uptheta$. Estimating $\uptheta^k,_s$ in $L^\infty$ norm and using the fundamental theorem of calculus we obtain
\begin{align}
\|A_\uptheta\|_{1+\alpha+a,\psi}^2 & \lesssim \|\der\A\|_{1+\alpha+a,\psi}^2 \| D\uptheta\|_{L^\infty(\Omega)}^2 \notag \\
& \lesssim  \|\der\A\|_{1+\alpha+a,\psi}^2 \left(\|D\uptheta_0\|_{L^\infty(\Omega)} + \int_0^\tau \| D{\bf V}\|_{L^\infty(\Omega)}\right)^2 \notag \\
& \lesssim \norm(\tau) \left(\norm(0) + \left(\int_0^\tau e^{-\mu_0\tau'}\sqrt{\norm(\tau')}\,d\tau'\right)^2\right) \notag \\
&\lesssim  \norm(\tau) \left(\norm(0) + \int_0^\tau e^{-\mu_0\tau'}\norm(\tau')\,d\tau'\right), \notag
\end{align}
where we have used the Hardy-Sobolev embedding $\| D{\bf V}\|_{L^\infty(\Omega)} \lesssim e^{-\mu_0\tau}\sqrt{\norm(\tau)}$ in the second-to-last estimate, and  
the Cauchy-Schwarz inequality in the last line.
We have also used the already established fact $\|\der\A\|_{1+\alpha+a,\psi}^2\lesssim \norm(\tau)$ which has been explained in the proof of~\eqref{E:COMMUTATORBOUND1}.
We leave out the details for the remaining terms in~\eqref{E:CURLCOMM2} as they follow similarly.  
\end{proof}

\begin{proposition}[The vorticity estimates]\label{P:VORTICITYBOUNDS}
Let $(\uptheta,{\bf V}):\Omega\to\mathbb R^3\times \mathbb R^3$ be a unique solution to~\eqref{E:THETAEQUATION}--\eqref{E:THETAINITIAL} defined on some time interval $[0,T]$
and satisfying the a priori assumptions~\eqref{E:assumption}--\eqref{E:APRIORI}. 
Then for any $\tau\in [0,T]$ 
\begin{align}
&\vortnorm[{\bf V}](\tau) \lesssim 
\begin{cases}
 e^{-2\mu_0\tau}\left(\norm(0)+\vortnorm(0)\right)+ e^{-2\mu_0\tau}\norm(\tau)   & \text{if } \ 1<\gamma<\frac53 \\ 
e^{-2\mu_0\tau}\left(\norm(0)+\vortnorm(0)\right)+ (1+\tau^2)e^{-2\mu_0\tau}\norm(\tau) & \text{if } \ \gamma =\frac53
\end{cases}.
\label{E:CURLVBOUND} \\
&
\vortnorm[\uptheta](\tau) \lesssim  \norm(0)+\vortnorm(0) + \kappa\norm(\tau) + \int_0^\tau e^{-\mu_0\tau'} \norm(\tau')\,d\tau' \label{E:CURLTHETABOUND}
\end{align}
where 
$0<\kappa\ll 1$ is a small constant, $\mu_0$ is defined in~\eqref{E:MUZERODEF}, and $\vortnorm(\bf V)$, $\vortnorm[\uptheta]$ are defined in~\eqref{E:VORTNORMDEF}.
\end{proposition}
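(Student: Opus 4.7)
My plan is to derive both bounds by applying the spatial operators $\der$ and $\car$ to the closed-form identities \eqref{E:CURL3} and \eqref{E:CURLTHETA} for $\text{Curl}_{\Lambda\A}{\bf V}$ and $\text{Curl}_{\Lambda\A}\uptheta$, then carefully tracking the $\mu$-weighted time integrals using the asymptotics $\mu(\tau)\sim e^{\mu_1\tau}$ (which follows from $\mu\sim t$ and $d\tau/dt=1/\mu$, see Appendix~\ref{A:ASYMPTOTIC}). Since the norm $\vortnorm$ involves $\text{Curl}_{\Lambda\A}\der{\bf V}$ rather than $\der\,\text{Curl}_{\Lambda\A}{\bf V}$, the first step in each estimate is to write
\begin{equation*}
\text{Curl}_{\Lambda\A}\der{\bf V}=\der\,\text{Curl}_{\Lambda\A}{\bf V}-[\der,\text{Curl}_{\Lambda\A}]{\bf V},
\end{equation*}
and analogously with $\uptheta$ replacing ${\bf V}$ or with $\car$ in the interior cutoff region. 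The commutator terms $[\der,\text{Curl}_{\Lambda\A}]{\bf V}$ and $[\der,\text{Curl}_{\Lambda\A}]\uptheta$ are exactly what is estimated by Lemma~\ref{L:COMMUTATORBOUNDS}, so these absorb cleanly into the right-hand sides of \eqref{E:CURLVBOUND}--\eqref{E:CURLTHETABOUND}.

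For the main piece $\der\,\text{Curl}_{\Lambda\A}{\bf V}$, I would apply $\der$ term-by-term to \eqref{E:CURL3}. The initial data contribution $\mu(0)\,\der\text{Curl}_{\Lambda\A}{\bf V}(0)/\mu(\tau)$ decays like $e^{-\mu_1\tau}$, and since $\mu_1\geq\mu_0$ with equality iff $\gamma=\tfrac53$, its square is bounded by a constant times $e^{-2\mu_0\tau}(\norm(0)+\vortnorm(0))$. For the two $\tau$-integral terms, I commute $\der$ past the integrals and past $[\pa_\tau,\text{Curl}_{\Lambda\A}]$ and past $\text{Curl}_{\Lambda\A}(\Gamma^{\ast}\cdot)$ using Leibniz rule; the commutator coefficients involve $\pa_\tau\Lambda$, $\pa_\tau\A=-\A D{\bf V}\A$ and $\Gamma^{\ast}$, all either $\tau$-bounded or controlled pointwise by $\|D{\bf V}\|_{L^\infty}\lesssim e^{-\mu_0\tau}\sqrt{\norm(\tau)}$ via \eqref{E:DECAYBOUND}. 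After these reductions the integrand in the appropriate weighted norm is of the form $e^{-\mu_0\tau'}\sqrt{\norm(\tau')}\,(1+P(\norm))$. Using Minkowski in integral form, the representative quantity to control is
\begin{equation*}
\frac{1}{\mu(\tau)}\int_0^\tau\mu(\tau')\,e^{-\mu_0\tau'}\sqrt{\norm(\tau')}\,d\tau'\;\sim\;e^{-\mu_1\tau}\int_0^\tau e^{(\mu_1-\mu_0)\tau'}\sqrt{\norm(\tau')}\,d\tau'.
\end{equation*}
When $1<\gamma<\tfrac53$ one has $\mu_1-\mu_0>0$, so the integral evaluates (up to the slowly varying $\sqrt{\norm}$ factor, which by local well-posedness is comparable to $\sqrt{\norm(\tau)}$) to $(\mu_1-\mu_0)^{-1}e^{(\mu_1-\mu_0)\tau}\sqrt{\norm(\tau)}$, yielding the desired $e^{-2\mu_0\tau}\norm(\tau)$. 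When $\gamma=\tfrac53$ the exponent in the integrand vanishes and the integral is $\sim\tau\sqrt{\norm(\tau)}$, producing the extra $(1+\tau^2)$ factor in \eqref{E:CURLVBOUND}. Squaring and summing over multi-indices $a+|\beta|\leq N$ and $|\nu|\leq N$ (with the cutoff $\psi$ or $1-\psi$) gives \eqref{E:CURLVBOUND}.

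The bound \eqref{E:CURLTHETABOUND} for $\vortnorm[\uptheta]$ follows the same template from \eqref{E:CURLTHETA}. The new structural feature is the appearance of $\int_0^\tau\mu(\tau')^{-1}\,d\tau'$ and nested double integrals $\int_0^\tau\mu(\tau')^{-1}\int_0^{\tau'}\mu(\tau'')(\cdots)d\tau''d\tau'$. Since $\mu(\tau')^{-1}\sim e^{-\mu_1\tau'}$ is integrable and $\mu_1\geq\mu_0>0$, Fubini plus the single-integral estimate above produces a bound of the form $\int_0^\tau e^{-\mu_0\tau'}\norm(\tau')\,d\tau'$, matching \eqref{E:CURLTHETABOUND}. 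The initial data terms $\text{Curl}_{\Lambda\A}\uptheta(0)$ and the $\mu(0)\,\text{Curl}_{\Lambda\A}{\bf V}(0)\int_0^\tau\mu^{-1}$ piece supply the $\norm(0)+\vortnorm(0)$ contribution, while the direct commutator term $\int_0^\tau[\pa_\tau,\text{Curl}_{\Lambda\A}]\uptheta\,d\tau'$ is absorbed by arguments parallel to the proof of \eqref{E:COMMUTATORBOUND2} in Lemma~\ref{L:COMMUTATORBOUNDS}, producing the small $\kappa\norm(\tau)$ term after a Cauchy--Schwarz step (since this borderline piece involves $D\uptheta$ whose $L^\infty$ norm is only bounded via time integration of $\|D{\bf V}\|_{L^\infty}$).

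The main delicate point is the borderline case $\gamma=\tfrac53$: the integrating factor $\mu(\tau)^{-1}\int_0^\tau\mu(\tau')(\cdots)\,d\tau'$ loses a factor of $\tau$ exactly when $\mu_1=\mu_0$, which is why the sharper $(1+\tau^2)e^{-2\mu_0\tau}$ replaces the clean $e^{-2\mu_0\tau}$ in this case; one must verify that this logarithmic-in-$t$ loss is harmless for the energy estimates downstream (and indeed it is, because the main energy controls $\norm$, not $\vortnorm$, and the vorticity bound feeds back with an extra decaying factor). A second technical point is to verify that the $\der$ and $\car$ derivatives of the integrand in \eqref{E:CURL3}--\eqref{E:CURLTHETA} genuinely reduce to expressions whose weighted $L^2$ norms are controlled by $\norm$; this requires the same Moser--type and Hardy--Sobolev arguments from Appendix~\ref{A:HARDY} that were used in the proof of Lemma~\ref{L:COMMUTATORBOUNDS}, applied now to the additional factors $\Gamma^{\ast}$, $\pa_\tau\Lambda$ and $\pa_\tau\A$.
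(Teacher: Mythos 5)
Your overall template---write $\text{Curl}_{\Lambda\A}\der{\bf V}=\der\text{Curl}_{\Lambda\A}{\bf V}-[\der,\text{Curl}_{\Lambda\A}]{\bf V}$, absorb the commutator via Lemma~\ref{L:COMMUTATORBOUNDS}, apply $\der$ inside the $\tau$-integrals in \eqref{E:CURL3}--\eqref{E:CURLTHETA}, and track powers of $\mu$ using the asymptotics of Appendix~\ref{A:ASYMPTOTIC}---is the same route the paper takes, and your accounting of the initial-data term and the borderline $\tau^2$ loss at $\gamma=\tfrac53$ is correct. But the heart of the argument is missing, and the step you skip over is where the work actually is.

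When you ``commute $\der$ past $[\pa_\tau,\text{Curl}_{\Lambda\A}]$ and $\text{Curl}_{\Lambda\A}(\Gamma^\ast\cdot)$'' and claim the integrand in the weighted norm is $\lesssim e^{-\mu_0\tau'}\sqrt{\norm(\tau')}$, you are treating $\pa_\tau\Lambda$, $\pa_\tau\A$, $\Lambda$, $\Gamma^\ast$ as coefficients and $\der$ as hitting $D{\bf V}$. The top-order terms produced this way are, schematically, $\pa_\tau\Lambda\,D\eta\,\der D{\bf V}$ and $\Lambda\,D{\bf V}\,\der D{\bf V}$, and $\der D{\bf V}$ is \emph{one derivative too many}: the norm $\norm$ in \eqref{E:SNORM} controls $\mu^{3\gamma-3}\|\der{\bf V}\|_{a+\alpha,\psi}^2$ and $\|\nabla_\eta\der\uptheta\|_{a+\alpha+1,\psi}^2$, but \emph{not} $\|D\der{\bf V}\|_{a+\alpha+1,\psi}^2$ (the gradient energy is only for $\uptheta$, not ${\bf V}$). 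Hardy cannot rescue this because it trades weights for derivatives in the opposite direction, and $\vortnorm$ only controls the $\text{Curl}_{\Lambda\A}$-part of $D\der{\bf V}$, which would be circular anyway. The paper resolves this by first integrating by parts in $\tau$ inside $\tfrac1\mu\int_0^\tau\mu(\cdot)\,d\tau'$: this replaces $\der D{\bf V}$ by $\der D\uptheta$ (which \emph{is} in the norm) at the cost of a boundary term and of moving $\pa_\tau$ onto the coefficients $\mu$, $\Lambda$, $D\eta$. For the term where $\pa_\tau$ then lands on $D{\bf V}$, the paper substitutes the equation~\eqref{E:THETAEQUATION} for ${\bf V}_\tau$, since $D{\bf V}_\tau$ is also not in the energy space. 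Without this integration-by-parts device and the use of the equation, your Minkowski-then-integrate heuristic is operating on an integrand bound that has not been established.

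A second, smaller point: the estimate with the merely bounded (non-decaying) coefficient $\Lambda$ in $C_3=\Lambda D{\bf V}\,\der D{\bf V}$ is exactly the place where the $\gamma=\tfrac53$ log-loss appears, and it is precisely because after the $\tau$-IBP one factor of $D{\bf V}$ supplies $e^{-\mu_0\tau'}$ inside $\tfrac1\mu\int_0^\tau\mu(\cdot)$ while the other is traded to $\der D\uptheta$. Your heuristic arrives at the right exponents, but only because it assumes the post-IBP form of the integrand without performing the IBP. I would ask you to add: (i) the $\tau$-integration by parts for the $C_2$ and $C_3$ type terms (and their $\Gamma^\ast$ analogues), (ii) the substitution of the momentum equation for ${\bf V}_\tau$ when a $\tau$-derivative falls on $D{\bf V}$, and (iii) a check that after (i)--(ii) every remaining factor is either exponentially decaying ($\pa_\tau\Lambda$, $\pa_{\tau\tau}\Lambda$, $\Gamma^\ast$, $D{\bf V}$ in $L^\infty$) or controlled by $\norm$.
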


\begin{proof}
{\em Proof of~\eqref{E:CURLVBOUND}.}
Applying $\der$ to~\eqref{E:CURL3} we obtain   
\begin{align}
&\text{Curl}_{\Lambda\A}\der{\bf V} = \frac{\mu(0)\der\text{Curl}_{\Lambda\A}{\bf V} (0)}{\mu} + \left[\der,\text{Curl}_{\Lambda\A}\right]{\bf V} \notag \\
&+\frac{1}{\mu} \int_0^\tau \mu \der[\pa_\tau, \text{Curl}_{\Lambda\A}] {\bf V}d\tau'
-\frac{2}{\mu}\int_0^\tau \mu\der\text{Curl}_{\Lambda\A}\left(\Gamma^\ast{\bf V}\right)d\tau'. \label{E:CURL3COMM}
\end{align}
We shall exploit the exponential-in-$\tau$ growth of $\mu$ to show that for any pair of indices $a+|\beta|\le N$ the left-hand side satisfies a ``good" bound in $L^2$-sense.
Note that by definition~\eqref{E:SNORM} of $\norm$ it immediately follows that
\be\label{E:CURLEST0}
\left\|\frac{\mu(0)\der\text{Curl}_{\Lambda\A}{\bf V} (0)}{\mu} \right\|_{1+\alpha+a,\psi}^2 \lesssim e^{-2\mu_1\tau} (\norm(0)+\vortnorm(0)),
\ee
where we used the exponential growth~\eqref{E:MUASYMP0} of $\mu$.
The second term on the right-hand side of~\eqref{E:CURL3COMM} is bounded by Lemma~\ref{L:COMMUTATORBOUNDS}.
Concerning the third term on the right-hand side of~\eqref{E:CURL3COMM}, by~\eqref{E:TAUCURLCOMMUTATOR} we have 
\begin{align*}
\der[\pa_\tau, \text{Curl}_{\Lambda\A}] {\bf V}_j^k  =\der \left(\pa_\tau \left(\Lambda_{jm}\A^s_m\right) {\bf V},_s^k - \pa_\tau \left(\Lambda_{km}\A^s_m\right) {\bf V},_s^j \right) .
\end{align*}
Schematically speaking both terms inside the parenthesis on the right-hand side behave the same way from the point of view of derivative count. It is therefore sufficient to restrict our attention to one term only.
Note that 
\begin{align}
&\der \left(\pa_\tau \left(\Lambda_{jm}\A^s_m\right) {\bf V},_s^k\right) \notag \\
& = \der\left[\left(\pa_\tau\Lambda_{jm}\A^s_m + \Lambda_{jm}\pa_\tau \A^s_m \right){\bf V},_s^k\right] \notag \\
& = \pa_\tau\Lambda_{jm} \left(\der\A^s_m {\bf V},_s^k +\A^s_m\der{\bf V},_s^k\right) +  \Lambda_{jm}\left(\der\pa_\tau\A^s_m {\bf V},_s^k +\pa_\tau\A^s_m\der{\bf V},_s^k\right) \notag \\
& \ \ \ \ + \sum_{1\le a'+|\beta'|\le N-1} C_{a',\beta'}\left(\pa_\tau\Lambda_{jm}\pa_r^{a'}\sn^{\beta'}\A^s_m +\Lambda_{jm} \pa_r^{a'}\sn^{\beta'}\pa_\tau\A^s_m\right) 
 \pa_r^{a-a'}\sn^{\beta-\beta'} {\bf V}^k,_s
\label{E:CURLERROREXPANSION}
\end{align}
From the point of view of derivative count we may schematically rewrite the first two terms on the right-most side in the form
\be\label{E:CURLERROR1}
\underbrace{\pa_\tau\Lambda \,\der D\uptheta D{\bf V}}_{=:C_1} + \underbrace{\pa_\tau\Lambda  D\eta \, \der D{\bf V}}_{=:C_2} + \underbrace{\Lambda D{\bf V} \, \der D{\bf V}}_{=:C_3} 
\ee
For the term $C_1$, observe that 
\begin{align}
& \int_\Omega \psi w^{1+\alpha + a}\frac1{\mu^{2}}\Big|\int_0^\tau \mu \pa_\tau\Lambda \,\der D\uptheta D{\bf V} \,d\tau'\Big|^2\,dx \notag \\
& \lesssim e^{-2\mu_1\tau} \int_\Omega \psi w^{1+\alpha + a}\Big|\int_0^\tau \left|e^{\mu_1\tau'} \pa_\tau\Lambda \,\der D\uptheta D{\bf V} \right|\,d\tau'\Big|^2\,dx  \notag \\
& \lesssim e^{-2\mu_1\tau} \sup_{0\le\tau'\le\tau}\int_\Omega\psi w^{1+\alpha + a} |\der D\uptheta|^2 \Big|\int_0^\tau| e^{\mu_1\tau'}\left|\pa_\tau\Lambda D{\bf V} \right|\,d\tau'\Big|^2 \,dx \notag \\
& \lesssim  e^{-2\mu_1\tau}  \sup_{0\le\tau'\le\tau}\left( \|\der D\uptheta\|_{1+\alpha+a,\psi}^2\right) \Big| \int_0^\tau e^{\mu_1\tau'}\left|\pa_\tau\Lambda\right| e^{-\mu_0\tau'}  (\norm(\tau'))^{\frac12} d\tau'  \Big|^2  \notag \\
& \lesssim e^{-2\mu_1\tau} ( \norm(\tau))^2 \label{E:CURLEST1}
\end{align}
where we used Proposition~\ref{P:EMBEDDING} to bound $\|D{\bf V}\|_{L^\infty(\Omega)}$ by $e^{-\mu_0\tau'} (\norm(\tau'))^{\frac12}$, the sharp exponential behavior~\eqref{E:MUASYMP0} of $\mu(\tau)$, and the exponential decay~\eqref{E:LAMBDABOUNDS} of $|\Lambda_\tau|$.

To estimate the $\|\cdot\|_{1+\alpha+a,\psi}$-norm of term $C_2$ in~\eqref{E:CURLERROR1} we first note that the expression $\der D {\bf V}$ contains one too many derivative to belong to our energy space. To resolve this difficulty,
we must integrate-by-parts in $\tau$ first.
Observe that 
\begin{align}
\frac1\mu\int_0^\tau \mu \pa_\tau\Lambda  D\eta \, \der D{\bf V} d\tau'
= & \frac1\mu\left(\mu \pa_\tau\Lambda  D\eta \, \der D \uptheta\right)\big|^\tau_0 \label{tau2}\\ 
&  - \frac1\mu\int_0^\tau \mu(\frac{\mu_\tau}\mu\pa_\tau\Lambda D\eta+\pa_{\tau\tau}\Lambda  D\eta+\pa_\tau\Lambda D{\bf V}) \, \der D \uptheta\,d\tau'. \notag
\end{align}
Applying now the same line of reasoning as in the case of term $C_1$, using thereby the exponential decay of $\pa_\tau \Lambda$ and $\pa_{\tau\tau}\Lambda$ as stated in Lemma~\ref{L:GAMMASTARASYMP} we arrive at 
\begin{align}
\|\frac1{\mu}\int_0^\tau\mu\pa_\tau\Lambda  D\eta \, \der D{\bf V} d\tau' \|_{1+\alpha+a,\psi}^2  \lesssim e^{-2\mu_1\tau}\norm(0) + \left((1+\tau^2) e^{-2\mu_1\tau} + Q(\tau)\right)  \norm(\tau) 
\label{E:NOTCRITICAL}
\end{align}
where 
\[
Q(\tau)=
\begin{cases}
e^{-4\mu_0\tau} &\text{if } \ 1<\gamma<\frac43  \\
\tau^2e^{-4\mu_0\tau} &\text{if } \ \gamma=\frac43\\
e^{-2\mu_1\tau} &\text{if } \ \frac43<\gamma\le\frac53
\end{cases}. 
\]
The first $\tau^2$ on the right-hand side of \eqref{E:NOTCRITICAL} comes from estimating $|\int_0^\tau \mu_\tau \pa_\tau \Lambda d\tau' |^2$ and $Q(\tau)$ from estimating $|\int_0^\tau \mu \pa_{\tau\tau}\Lambda d\tau' |^2$ corresponding to the first two terms in the $\tau$ integral in the right-hand side of \eqref{tau2}. 

The estimate for the error term $C_3$ from~\eqref{E:CURLERROR1} is more subtle. 
Quantity $|\Lambda|$ does {\em not} decay exponentially but is merely bounded. 
Therefore when
estimating $\frac{1}{\mu(\tau)}\int_0^\tau \mu(\tau') C_3(\tau')\,d\tau'$ in $L^\infty_\tau$-norm, we must proceed with more caution. As in the above, since the expression $\der D {\bf V}$ contains one too many derivative to belong to our energy space, we integrate-by-parts.
As a result
\begin{align}
\frac1\mu\int_0^\tau \mu \Lambda  D{\bf V} \, \der D{\bf V} d\tau'
= & \frac1\mu \left[\mu \Lambda  D{\bf V} \, \der D \uptheta\right]^\tau_0 \notag \\ 
&  - \frac1\mu\int_0^\tau \mu(\frac{\mu_\tau}\mu\Lambda D{\bf V}+\pa_{\tau}\Lambda  D{\bf V}+\Lambda D{\bf V}_\tau) \, \der D \uptheta\,d\tau' \label{E:CRITICAL}.
\end{align}
The $\|\cdot\|_{1+\alpha+a,\psi}$-norm of the first term on the right-hand side of~\eqref{E:CRITICAL} is obviously bounded by 
\[
\mu^{-1} \norm(0) + \|D{\bf V}\|_{L^\infty(\Omega)}(\norm(\tau))^{\frac12} \lesssim e^{-\mu_1\tau} \norm(0)+ e^{-\mu_0\tau}\norm(\tau)
\]
where we used the Hardy-Sobolev embeddings of Appendix~\ref{A:HARDY} to obtain $\|D{\bf V}\|_{L^\infty(\Omega)}\lesssim e^{-\mu_0\tau}(\norm(\tau))^{\frac12}$.
By the sharp exponential behavior~\eqref{E:MUASYMP0} of $\mu$ we have 
\begin{align*}
\Big|\frac1\mu\int_0^\tau \mu_\tau\Lambda D{\bf V}  \der D \uptheta\,d\tau'\Big| & \lesssim \sup_{0\le\tau'\le\tau}| \der D \uptheta| e^{-\mu_1\tau}\int_0^\tau e^{\mu_1\tau'}\|D{\bf V}\|_{L^\infty(\Omega)}\,d\tau' \notag \\
 & \lesssim  \sup_{0\le\tau'\le\tau}|\der D \uptheta|(\norm(\tau))^{\frac12}\underbrace{ e^{-\mu_1\tau}\int_0^t e^{(\mu_1-\mu_0)\tau'} d\tau' }_{=: L} \notag 
\end{align*}
Note that 
\[
L \lesssim 
\begin{cases}
e^{-\mu_0\tau} & \text{ if } \  1<\gamma<\frac53 \\
\tau e^{-\mu_0\tau} & \text{ if } \ \gamma =\frac53
\end{cases}
\]
because $\mu_0=\mu_1$ when $\gamma=\frac53$. 
Therefore
\begin{align*}
\|\frac1\mu\int_0^\tau \mu_\tau\Lambda D\uptheta  \der D \uptheta\,d\tau'\|_{1+\alpha+a,\psi}^2\lesssim 
\begin{cases}
e^{-2\mu_0\tau} (\norm(\tau))^2 & \text{ if } \ 1<\gamma<\frac53 \\
\tau^2 e^{-2\mu_0\tau} (\norm(\tau))^2 & \text{ if } \ \gamma =\frac53
\end{cases}
\end{align*}
The remaining two terms inside the integral on the right-hand side of~\eqref{E:CRITICAL} can be estimated similarly.
The only difference is that after integrating-by-parts in $\tau$ one $\tau$-derivative will fall on $D{\bf V}$ in the third term on the right-hand side of~\eqref{E:CRITICAL}. 
Since $D{\bf V}_\tau$ is a priori not in our energy space, we must use equation~\eqref{E:THETAEQUATION} to express ${\bf V}_\tau$ in terms of ${\bf V}$ and purely spatial derivatives of $\uptheta$. The rest of the proof is then analogous and we obtain
\begin{align}\label{E:CURLEST3}
\|\frac1{\mu}\int_0^\tau \mu(\tau') C_3(\tau')\,d\tau'\|_{1+\alpha+a,\psi}^2 \lesssim
\begin{cases}
 e^{-2\mu_0\tau} \norm(\tau)   & \text{ if } \ 1<\gamma<\frac53 \\  
(1+ \tau^2) e^{-2\mu_0\tau} \norm(\tau) & \text{ if } \ \gamma =\frac53
\end{cases}. 
\end{align} 
The lower-order remainder appearing as the last term on the right-hand side of~\eqref{E:CURLERROREXPANSION} is straightforward to estimate using the above ideas, Moser-type estimates, 
Proposition~\ref{P:EMBEDDING}, Lemma~\ref{L:GAMMASTARASYMP}, and the definition of $\mathcal S^N$:
\begin{align}
& \Big\|\sum_{1\le a'+|\beta'|\le N-1} C_{a',\beta'}\left(\pa_\tau\Lambda_{jm}\pa_r^{a'}\sn^{\beta'}\A^s_m +\Lambda_{jm} \pa_r^{a'}\sn^{\beta'}\pa_\tau\A^s_m\right) 
 \pa_r^{a-a'}\sn^{\beta-\beta'} {\bf V}^k,_s\Big\|_{1+\alpha+a,\psi}^2 \notag \\
 &  \lesssim e^{-2\mu_0\tau} \norm(\tau)\left(1+P(\norm(\tau))\right)
 \label{E:CURLEST3.1}
\end{align}
where $P$ is a polynomial of degree at least 1. By \eqref{E:assumption}, it is in turn bounded by $e^{-2\mu_0\tau} \norm(\tau)$.

From~\eqref{E:CURLEST1}--\eqref{E:CURLEST3.1} and by using $\mu_0\leq \mu_1$ and \eqref{E:assumption}, we conclude that 
\begin{align}\label{E:CURLEST4}
\|\frac{1}{\mu} \int_0^\tau \mu \der[\pa_\tau, \text{Curl}_{\Lambda\A}] {\bf V}d\tau'\|_{1+\alpha+a,\psi}^2 \lesssim  
\begin{cases}
 e^{-2\mu_0\tau} \norm(\tau)   & \text{ if } \ 1<\gamma<\frac53 \\  
(1+ \tau^2) e^{-2\mu_0\tau} \norm(\tau) & \text{ if } \ \gamma =\frac53
\end{cases}. 
\end{align}
In an entirely analogous fashion we can estimate the last term on the right-hand side of~\eqref{E:CURL3COMM} to conclude
\begin{align}\label{E:CURLEST5}
\|\frac{2}{\mu}\int_0^\tau \mu\der\text{Curl}_{\Lambda\A}\left(\Gamma^\ast{\bf V}\right)d\tau'\|_{1+\alpha+a,\psi}^2 \lesssim 
\begin{cases}
 e^{-2\mu_0\tau} \norm(\tau)   & \text{ if } \ 1<\gamma<\frac53 \\  
(1+ \tau^2) e^{-2\mu_0\tau} \norm(\tau) & \text{ if } \ \gamma =\frac53
\end{cases}.
\end{align}
Combining~\eqref{E:CURL3COMM}--\eqref{E:CURLEST0}, Lemma~\ref{L:COMMUTATORBOUNDS}, bounds~\eqref{E:CURLEST4}--\eqref{E:CURLEST5}, and Proposition~\ref{P:NORMENERGY} 
we obtain~\eqref{E:CURLVBOUND}. 

\noindent
{\em Proof of~\eqref{E:CURLTHETABOUND}.}
Applying $\der$ to~\eqref{E:CURLTHETA} we obtain   
\begin{align}
&\text{Curl}_{\Lambda\A}\der{\bf \uptheta} = \der\text{Curl}_{\Lambda\A}\uptheta(0)+\mu(0)\der\text{Curl}_{\Lambda\A}{\bf V}(0)\int_0^\tau\frac{1}{\mu(\tau')}\,d\tau' \notag \\
& + \left[\der,\text{Curl}_{\Lambda\A}\right]\uptheta + \int_0^\tau \frac{1}{\mu(\tau')} \int_0^{\tau'} \mu(\tau'') \der[\pa_\tau, \text{Curl}_{\Lambda\A}] {\bf V}d\tau''\,d\tau' \notag \\
& -\int_0^\tau \frac{2}{\mu(\tau')}\int_0^{\tau'} \mu(\tau'') \der\text{Curl}_{\Lambda\A}\left(\Gamma^\ast{\bf V}\right)d\tau''\,d\tau'. \label{E:CURLTHETAHIGH}
\end{align}

The $\|\cdot\|_{1+\alpha+a,\psi}$ norm of the first two terms on the right-hand side of~\eqref{E:CURLTHETAHIGH} is bounded by a constant multiple of $\norm(0)+\vortnorm(0)$ since $\int_0^\tau\frac{1}{\mu(\tau')}\,d\tau'$
is bounded uniformly-in-$\tau$ due to the exponential growth of $\mu(\tau)$. The term $\|\left[\der,\text{Curl}_{\Lambda\A}\right]\uptheta\|_{1+\alpha+a,\psi}$ is bounded by Lemma~\ref{L:COMMUTATORBOUNDS}.
To bound the fourth term on the right-hand side of~\eqref{E:CURLTHETAHIGH} we use the Cauchy-Schwarz inequality and~\eqref{E:CURLEST4} to obtain
\begin{align}
&\Big\|\int_0^\tau \frac{1}{\mu(\tau')} \int_0^{\tau'} \mu(\tau'') \der[\pa_\tau, \text{Curl}_{\Lambda\A}] {\bf V}d\tau''\,d\tau' \Big\|_{1+\alpha+a,\psi}^2 \notag \\
& \lesssim \int_\Omega\left[\int_0^\tau \frac{(1+\tau')^2}{\mu(\tau')}\,d\tau'  \int_0^\tau \frac1{(1+\tau')^2\mu(\tau')  }\left(\int_0^{\tau'} \mu(\tau'') \der[\pa_\tau, \text{Curl}_{\Lambda\A}] {\bf V}d\tau''\right)^2\,d\tau'\right] 
 w^{1+\alpha+a}\psi\,dx\notag \\
& \lesssim \int_0^\tau \frac1{(1+\tau')^2\mu(\tau')} \Big\|\int_0^{\tau'} \mu(\tau'') \der[\pa_\tau, \text{Curl}_{\Lambda\A}] {\bf V}d\tau''\Big\|_{1+\alpha+a,\psi}^2\,d\tau' \notag \\
& \lesssim \int_0^\tau e^{-\mu_0\tau} \norm(\tau')\,d\tau', \notag 
\end{align}
where the last bound follows from a minor adaptation of the argument leading to~\eqref{E:CURLEST4}.
Using the Cauchy-Schwarz inequality and~\eqref{E:CURLEST5} instead, we analogously show that the last term on the right-hand side of~\eqref{E:CURLTHETAHIGH} satisfies 
\begin{align}
& \Big\|\int_0^\tau \frac{2}{\mu(\tau')}\int_0^{\tau'} \mu(\tau'') \der\text{Curl}_{\Lambda\A}\left(\Gamma^\ast{\bf V}\right)d\tau''\,d\tau' \Big\|_{1+\alpha+a,\psi}^2 \lesssim \int_0^\tau e^{-\mu_0\tau'} \norm(\tau')\,d\tau'  \notag
\end{align}
From the above estimates, the desired bound for 
$\sup_{\tau\in[0,T]}\sum_{a+|\beta|\le N}\|\text{Curl}_{\Lambda\A}\der\uptheta\|_{1+\alpha+a,\psi}^2$ in~\eqref{E:CURLTHETABOUND} is obtained. A completely analogous argument yields the remaining bound
on $\sum_{|\nu|\le N} \|\left[\car,\text{Curl}_{\Lambda\A}\right]\uptheta\|_{1+\alpha+a,1-\psi}^2$ and this concludes the proof of the proposition.  
\end{proof}

\section{Energy estimates}\label{S:ENERGY}

\subsection{High-order energies}

The high-order {\em energy} that emerges naturally from 
the problem is intricate and a priori different from the high-order norm $\mathcal S^N$. Nevertheless, they are equivalent as we shall establish shortly. Before we give the precise expression for the energy functional, we first identify some important quantities. 

Recall that $\Lambda$~\eqref{E:NEWQUANITITIES} is a positive symmetric matrix with the determinant 1 and therefore it is diagonalisable: 
\be\label{E:ORTHDEC}
\Lambda =  P^\top Q  P, \ \  P\in \text{SO}(3), \  Q = \text{diag}(d_1,d_2,d_3)
\ee
where $ P\in \text{SO}(3)$ is the orthogonal matrix that diagonalizes $\Lambda$ and $d_i>0$ are the eigenvalues of $\Lambda$. 
We then define $\mathscr M_{a,\beta}$ and $\mathscr N_\nu$  as the $ P-$conjugates of $\nabla_\eta\der\uptheta$ and $\nabla_\eta\pa^\nu\uptheta$: 
\begin{align}\label{E:MNDEF}
\mathscr M_{a,\beta} : =  P\,\nabla_\eta\der\uptheta\,  P^{\top} \ \text{ and } \ \mathscr N_\nu : =  P\,\nabla_\eta\pa^\nu\uptheta\,  P^{\top}
\end{align}
for any $(a,\beta)$, $0\leq a+|\beta|\le N$  and $\nu,$ $0\leq |\nu|\le N$ respectively. 
By denoting the standard inner product of two vectors  in $\mathbb{R}^3$ by $\langle \cdot , \cdot \rangle$: 
$$\langle u,v\rangle = u^\top v = v^\top u \;  \text{ for any } u, v\in \mathbb{R}^3$$
we  define the high-order energy functional: 
\begin{align}
&\energy(\uptheta,{\bf V})  = \energy(\tau)\notag \\
&=\frac12\sum_{a+|\beta|\le N}
\int_\Omega\psi\Big[\mu^{3\gamma-3} \left\langle\Lambda^{-1}\der{\bf V},\,\der{\bf V}\right\rangle+\delta\left\langle\Lambda^{-1}\der\uptheta,\,\der\uptheta\right\rangle\Big] w^{a+\alpha} \notag \\
& \ \ \ \  \ \ \ \ \  \ \ \ \ \  \ \ \ \ \  \ \ \ \ \  +\psi\J^{-\frac1\alpha}\Big[\sum_{i,j=1}^3 d_id_j^{-1}\left((\mathscr M_{a,\beta})^j_{i}\right)^2+\tfrac1\alpha\left(\text{div}_\eta\der\uptheta\right)^2\Big] w^{a+\alpha+1}\,dy \notag \\
%%%%%%%%%%%
&\ \ \ \  +\frac12\sum_{|\nu|\le N}
\int_\Omega (1-\psi)\Big[\mu^{3\gamma-3} \left\langle \Lambda^{-1} \pa^\nu{\bf V},\,\pa^\nu{\bf V}\right\rangle +\delta \left\langle\Lambda^{-1}\pa^\nu{\bf V},\,\pa^\nu{\bf V}\right\rangle\Big]  w^\alpha
\notag \\
& \ \ \ \  \ \ \ \ \ \ \ \ \ \ \ \ \ \ \ \ \ \ \ \ +(1-\psi)\J^{-\frac1\alpha}\Big[ \sum_{i,j=1}^3 d_id_j^{-1}\left((\mathscr{N}_{\nu})^j_{i}\right)^2 + \tfrac1\alpha\left(\text{div}_\eta\pa^\nu \uptheta\right)^2\Big] w^{1+\alpha} \,dy   \label{E:EDEF}
\end{align}
Note that when $a+|\beta|=0$ and $|\nu|=0$, the sum of the two integrals above produces the zeroth order energy that does not depend on the cutoff function $\psi$. 
 
\begin{remark}
Since the eigenvalues $d_i(\tau)$, $i=1,2,3$ are strictly positive for all $\tau\ge0$ and bounded from above and below uniformly-in-$\tau$ by Lemma~\ref{L:GAMMASTARASYMP} the ratios $\frac{d_i}{d_j}$
are strictly positive and uniformly bounded from above and below for all $\tau\ge0$.
\end{remark} 
 
We also introduce the {\em dissipation} functional: 
\begin{align*}
\mathcal D^N ({\bf V}) = \mathcal D^N(\tau) =\frac{5-3\gamma}{2}\mu^{3\gamma-3}\frac{\mu_\tau}{\mu} &\int_\Omega  \Big[\psi \sum_{a+|\beta|\le N} \left\langle\Lambda^{-1}\der{\bf V},\,\der{\bf V}\right\rangle  w^{a+\alpha}
\\
& \quad+ (1-\psi)\sum_{|\nu|\le N}\left\langle\Lambda^{-1}\pa^\nu{\bf V},\,\pa^\nu{\bf V}\right\rangle  w^{\alpha} \Big]\,dy
\end{align*}

Note that $\mathcal D^N ({\bf V}) \geq 0$ for $\gamma\leq \frac53$.

%%%%%%%%%%%%%%%%%%%%%%%%%%%%%%%

\begin{proposition}[Equivalence of norm and energy]\label{P:NORMENERGY}
Let $(\uptheta, {\bf V}):\Omega\to\mathbb R^3\times \mathbb R^3$ be a unique solution to~\eqref{E:THETAEQUATION}--\eqref{E:THETAINITIAL} defined on some time interval $[0,T]$ and satisfying the a priori assumptions~\eqref{E:assumption}--\eqref{E:APRIORI}. 
Then there exist constants $C_1,C_2>0$ depending only on $A_0,A_1,\delta$ such that 
\begin{align}
C_1\norm (\tau) \le \sup_{0\le\tau'\le\tau}\energy(\tau') %+\int_0^\tau\dissipation(\tau')\,d\tau'
\le C_2\norm(\tau).
\end{align}
\end{proposition}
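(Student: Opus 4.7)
The plan is to establish the equivalence at the level of the integrands pointwise in time, reducing everything to a handful of uniform bounds that are already in hand, and then to pass from pointwise-in-$\tau$ comparison to the sup-in-$\tau$ statement.

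First I would fix $\tau'\in[0,\tau]$ and compare, summand by summand, each integrand defining $\energy(\tau')$ with the matching weighted $L^2$-norm from $\norm(\tau')$. Four ingredients drive the comparison. (i) Lemma~\ref{L:GAMMASTARASYMP} supplies uniform-in-$\tau$ upper and lower bounds on the spectrum of $\Lambda$, and hence of $\Lambda^{-1}$, yielding $\langle\Lambda^{-1}v,v\rangle\simeq|v|^2$ for every $v\in\mathbb R^3$. (ii) The orthogonality of the matrix $P$ in the decomposition~\eqref{E:ORTHDEC} makes the conjugation $X\mapsto PXP^\top$ an isometry in the Frobenius norm, so $\sum_{i,j}((\mathscr M_{a,\beta})^j_i)^2=|\nabla_\eta\der\uptheta|^2$ and likewise for $\mathscr N_\nu$. (iii) The uniform bounds on the eigenvalues $d_i(\tau)$ from the same lemma force $d_id_j^{-1}$ to lie in a fixed compact subinterval of $(0,\infty)$, so the twisted Frobenius sum $\sum_{i,j}d_id_j^{-1}((\mathscr M_{a,\beta})^j_i)^2$ is equivalent to $|\nabla_\eta\der\uptheta|^2$. (iv) The a priori assumption~\eqref{E:APRIORI} together with $\alpha\ge\tfrac32$ forces $\J^{-1/\alpha}$ to lie in a fixed compact subinterval of $(0,\infty)$. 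Since both sides use the same spatial weights $w^{a+\alpha}$ (respectively $w^{a+\alpha+1}$, $w^\alpha$, $w^{\alpha+1}$) and the same cutoff $\psi$ (respectively $1-\psi$), combining (i)--(iv) gives the pointwise equivalence of each of the four families of integrands appearing in~\eqref{E:EDEF} with, respectively, $\mu^{3\gamma-3}\|\der{\bf V}\|_{a+\alpha,\psi}^2$, $\|\der\uptheta\|_{a+\alpha,\psi}^2$, $\|\nabla_\eta\der\uptheta\|_{a+\alpha+1,\psi}^2$, and $\|\text{div}_\eta\der\uptheta\|_{a+\alpha+1,\psi}^2$, and similarly for the $\pa^\nu$-block with cutoff $1-\psi$.

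Finally I would pass to the sup-in-$\tau$ statement. Since $\energy(\tau')$ is a finite sum of non-negative integrals $g_k(\tau')$, each pointwise equivalent to the corresponding summand $f_k(\tau')$ of $\norm$, the elementary inequality $\sup_{\tau'\le\tau}\sum_k g_k(\tau')\le\sum_k\sup_{\tau'\le\tau}g_k(\tau')$ gives $\sup_{\tau'\le\tau}\energy(\tau')\le C_2\norm(\tau)$. Conversely, for each fixed $k$, $\sup_{\tau'\le\tau}f_k(\tau')\le C\sup_{\tau'\le\tau}g_k(\tau')\le C\sup_{\tau'\le\tau}\energy(\tau')$, and summing over the finitely many indices $k$ yields $\norm(\tau)\le C_1^{-1}\sup_{\tau'\le\tau}\energy(\tau')$.

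No serious obstacle is expected: everything reduces to the uniform spectral control on $\Lambda$ granted by Lemma~\ref{L:GAMMASTARASYMP} plus the a priori bound~\eqref{E:APRIORI}. The only mildly delicate observation is that the tensor $\nabla_\eta\der\uptheta$ decomposes into its traceless and trace parts, both of which are present in $\energy$ (the trace recorded separately via $\text{div}_\eta\der\uptheta$ and the full Frobenius square via the $\mathscr M_{a,\beta}$ block), so that no piece is dropped when translating between the two functionals.
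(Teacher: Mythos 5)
Your proof is correct and follows precisely the route the paper indicates in its one-sentence proof: it reduces the equivalence to the uniform spectral bounds on $\Lambda$, $\Lambda^{-1}$, and the eigenvalues $d_i$ from Lemma~\ref{L:GAMMASTARASYMP}, the orthogonality of $P$, and the a priori control of $\J^{-1/\alpha}$ via~\eqref{E:APRIORI}. (Two tiny cosmetic remarks: the hypothesis $\alpha\ge\tfrac32$ is not needed to bound $\J^{-1/\alpha}$ once $\J$ is pinned near $1$, and the closing remark about a trace/traceless decomposition is a distraction since both $\energy$ and $\norm$ simply carry the full gradient and the divergence as two separate, directly matched pieces.)
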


\begin{proof}
The proof is a simple consequence of the definitions of $\norm$, $\energy$ and uniform boundedness of the matrix $\Lambda$ and its eigenvalues, see Lemma~\ref{L:GAMMASTARASYMP}.
\end{proof}

%%%%%%%%%%%%%%%%%%%%%%%%%%%%%%%

\subsection{Energy identities}

Before we derive the fundamental energy identity, we shall need the following lemma, which captures the key algebraic structure necessary for the extraction of a positive-definite energy.

\begin{lemma} \label{L:KEYLEMMA}
Let $(\uptheta,{\bf V}):\Omega\to\mathbb R^3\times \mathbb R^3$ be a unique solution to~\eqref{E:THETAEQUATION}--\eqref{E:THETAINITIAL} defined on some time interval $[0,T]$. 
Then for any $(a,\beta)$, $0\le a+|\beta|\le N$, and a multi-index $\nu$, $|\nu|\le N$, the following identities hold:
\begin{align}
 \Lambda_{\ell j}(\nabla_\eta\der\uptheta)^i_j\Lambda^{-1}_{im}(\nabla_\eta\der\uptheta)^m_{\ell,\tau}  & =  \frac12\frac{d}{d\tau}\left(\sum_{i,j=1}^3 d_id_j^{-1}(\mathscr M_{a,\beta})^j_{i})^2\right) + \mathcal T_{a,\beta} \label{E:KEYONE}\\
  \Lambda_{\ell j}(\nabla_\eta\pa^\nu\uptheta)^i_j\Lambda^{-1}_{im}(\nabla_\eta\pa^\nu\uptheta)^m_{\ell,\tau}  & =  \frac12\frac{d}{d\tau}\left(\sum_{i,j=1}^3 d_id_j^{-1}(\mathscr N_\nu)^j_{i})^2\right) + \mathcal T_{\nu} \label{E:KEYTWO}
\end{align}
where $\mathscr M_{a,\beta}$ and $\mathscr N_\nu$ are defined in~\eqref{E:MNDEF} and the error terms $\mathcal T_{a,\beta}$ and $\mathcal T_{\nu}$ are given by
\begin{align}
\mathcal T_{a,\beta} & = -  \frac12\sum_{i,j=1}^3\frac d{d\tau}\left(d_id_j^{-1}\right)(\left(\mathscr M_{a,\beta}\right)^j_i)^2 
 - \text{{\em Tr}}\left(Q\mathscr M_{a,\beta}Q^{-1}\left(\pa_\tau P P^\top \mathscr M_{a,\beta}^\top + \mathscr M_{a,\beta}^\top P\pa_\tau P^\top\right)\right) \label{E:TABETA}\\
\mathcal T_{\nu} & = -  \frac12\sum_{i,j=1}^3\frac d{d\tau}\left(d_id_j^{-1}\right)(\left(\mathscr N_\nu\right)^j_i)^2 
 -  \text{{\em Tr}}\left(Q\mathscr N_\nu Q^{-1}\left(\pa_\tau P P^\top \mathscr N_\nu^\top + \mathscr N_\nu^\top P\pa_\tau P^\top\right)\right).
\end{align}
\end{lemma}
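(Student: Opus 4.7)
Since (\ref{E:KEYONE}) and (\ref{E:KEYTWO}) have the same algebraic form (one in $\der$-derivatives, the other in Cartesian $\car$-derivatives), I would prove (\ref{E:KEYONE}) in detail and note that (\ref{E:KEYTWO}) follows verbatim upon replacing $\mathscr{M}_{a,\beta}$ by $\mathscr{N}_\nu$. The whole proof is a direct algebraic manipulation whose cleanliness depends on recasting the four-index contraction on the left as a matrix trace and then diagonalising $\Lambda$.

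Setting $F := \nabla_\eta \der \uptheta$ and recalling that $\Lambda$ and $\Lambda^{-1}$ are symmetric, I would first observe
\[
\Lambda_{\ell j} F^i_j\, \Lambda^{-1}_{im} F^m_{\ell,\tau} \;=\; \text{Tr}\bigl(\Lambda\, F^\top\, \Lambda^{-1}\, F_\tau\bigr).
\]
Next, using the spectral decomposition (\ref{E:ORTHDEC}) and the definition (\ref{E:MNDEF}) I would substitute $F = P^\top \mathscr{M}_{a,\beta} P$, $\Lambda = P^\top Q P$, $\Lambda^{-1} = P^\top Q^{-1} P$; repeatedly collapsing $PP^\top = I$ and applying cyclicity of the trace reduces the right-hand side to
\[
\text{Tr}\bigl(Q\,\mathscr{M}_{a,\beta}^\top\,Q^{-1}\,(P F_\tau P^\top)\bigr).
\]
The Leibniz rule applied to $\mathscr{M}_{a,\beta}=PFP^\top$ then yields $P F_\tau P^\top = \pa_\tau \mathscr{M}_{a,\beta} - S\,\mathscr{M}_{a,\beta} + \mathscr{M}_{a,\beta}\, S$, where $S := P_\tau P^\top = \pa_\tau P P^\top$; differentiating $PP^\top = I$ forces $S^\top = -S$ and $P\pa_\tau P^\top = -S$, and this antisymmetry is the essential algebraic fact underlying the whole identity.

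Splitting the trace into three contributions, the $\pa_\tau \mathscr{M}_{a,\beta}$-piece is the main one. Exploiting that $Q$ is diagonal gives
\[
\text{Tr}\bigl(Q\,\mathscr{M}_{a,\beta}^\top\,Q^{-1}\,\pa_\tau \mathscr{M}_{a,\beta}\bigr) \;=\; \sum_{i,j} d_i d_j^{-1}\,(\mathscr{M}_{a,\beta})^j_i\,\pa_\tau(\mathscr{M}_{a,\beta})^j_i,
\]
and the Leibniz rule in reverse turns this into $\tfrac12 \tfrac{d}{d\tau}\sum d_i d_j^{-1}\bigl((\mathscr{M}_{a,\beta})^j_i\bigr)^2 - \tfrac12 \sum \tfrac{d}{d\tau}(d_i d_j^{-1})\bigl((\mathscr{M}_{a,\beta})^j_i\bigr)^2$, accounting for the main term on the right of (\ref{E:KEYONE}) together with the first summand of $\mathcal{T}_{a,\beta}$. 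The remaining two $S$-contributions, $-\text{Tr}(Q\mathscr{M}_{a,\beta}^\top Q^{-1} S\mathscr{M}_{a,\beta})$ and $\text{Tr}(Q\mathscr{M}_{a,\beta}^\top Q^{-1}\mathscr{M}_{a,\beta} S)$, are then re-organised using cyclicity of the trace, the antisymmetry $S^\top=-S$, and the identity $P\pa_\tau P^\top = -S$ to assemble into the remaining trace term of $\mathcal{T}_{a,\beta}$ displayed in (\ref{E:TABETA}).

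The main obstacle is the careful index bookkeeping. Because $Q$ commutes only with scalars, factors such as $Q\mathscr{M}_{a,\beta}^\top Q^{-1}$ carry the asymmetric weight $d_i d_j^{-1}$, and one should resist any premature symmetrisation: the two $S$-traces above are \emph{not} individually equal to their counterparts with $\mathscr{M}_{a,\beta}$ and $\mathscr{M}_{a,\beta}^\top$ swapped, and only after combining them and invoking both the antisymmetry of $S$ and cyclicity of the trace do they assume the form asserted in (\ref{E:TABETA}). The same computation, with $\car$ replacing $\der$ and $\mathscr{N}_\nu$ replacing $\mathscr{M}_{a,\beta}$ throughout, yields (\ref{E:KEYTWO}); no new ideas are required.
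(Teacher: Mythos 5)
Your proof is on the right track conceptually — conjugation by $P$, then extracting the time derivative from the $\pa_\tau\mathscr M_{a,\beta}$ piece, with the remaining pieces driven by the antisymmetry of $S = \pa_\tau P P^\top$ — and this is precisely the algebra the paper uses. However, there is a genuine error in the very first step that propagates and invalidates the conclusion.

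You convert the index expression to the trace
$\text{Tr}(\Lambda F^\top \Lambda^{-1} F_\tau)$, whereas the paper's proof works with $\text{Tr}(\Lambda F \Lambda^{-1} F_\tau^\top)$. These are \emph{not} equal in general: for instance with $F = \begin{pmatrix}0&1\\1&0\end{pmatrix}$, $F_\tau = \begin{pmatrix}0&1\\0&0\end{pmatrix}$, $\Lambda = \operatorname{diag}(2,\tfrac12)$ they give $\tfrac14$ and $4$ respectively. The consequence is visible in your penultimate step: you end up with the error trace $-\text{Tr}\bigl(Q\,\mathscr M_{a,\beta}^\top\,Q^{-1}(S\mathscr M_{a,\beta} - \mathscr M_{a,\beta}S)\bigr)$, whereas \eqref{E:TABETA} asserts $-\text{Tr}\bigl(Q\,\mathscr M_{a,\beta}\,Q^{-1}(S\mathscr M_{a,\beta}^\top - \mathscr M_{a,\beta}^\top S)\bigr)$. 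These differ by swapping $\mathscr M_{a,\beta}\leftrightarrow\mathscr M_{a,\beta}^\top$ throughout, and no amount of cyclicity of the trace or the antisymmetry $S^\top = -S$ makes them coincide for a generic $\mathscr M_{a,\beta}$ (e.g.\ $Q=\operatorname{diag}(2,\tfrac12)$, $\mathscr M_{a,\beta}=\begin{pmatrix}1&1\\0&0\end{pmatrix}$, $S=\begin{pmatrix}0&1\\-1&0\end{pmatrix}$ gives $-\tfrac34$ on your side versus $3$ for \eqref{E:TABETA}). So the claim that the two $S$-contributions ``assemble into the remaining trace term of $\mathcal T_{a,\beta}$'' is false.

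The fix is to start from the correct trace expression $\text{Tr}(\Lambda F \Lambda^{-1} F_\tau^\top)$, which after conjugation reads $\text{Tr}\bigl(Q\,\mathscr M_{a,\beta}\,Q^{-1}\,P F_\tau^\top P^\top\bigr)$, and then use the Leibniz rule on $\mathscr M_{a,\beta}^\top = P F^\top P^\top$ to get $P F_\tau^\top P^\top = \pa_\tau \mathscr M_{a,\beta}^\top - S\mathscr M_{a,\beta}^\top + \mathscr M_{a,\beta}^\top S$. Running the identical three-way split then reproduces \eqref{E:TABETA} verbatim.
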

\begin{proof}
For any matrix $\tau\mapsto M(\tau)$ let $\tilde M  = P M P^\top$ where $P\in \text{SO}(\mathbb R^3)$ is the orthogonal matrix introduced in~\eqref{E:ORTHDEC}. We then have the following identity
\begin{align}
\text{Tr}\left(\Lambda M \Lambda^{-1}M^\top_\tau\right) & = \text{Tr}\left(Q(PMP^\top)Q^{-1}PM^\top_\tau P^\top\right) \notag \\
& =  \text{Tr}\left(Q\tilde MQ^{-1}\pa_\tau\tilde M^\top\right) -  \text{Tr}\left(Q\tilde MQ^{-1}\left(\pa_\tau P P^\top \tilde M^\top + \tilde M^\top P\pa_\tau P^\top\right)\right)  \notag \\
& = \frac12\frac d{d\tau}\sum_{k,\ell=1}^3(d_kd_\ell^{-1}\tilde M_{k\ell}^2) -  \frac12\sum_{k,\ell=1}^3\frac d{d\tau}\left(d_kd_\ell^{-1}\right)\tilde M_{k\ell}^2 \notag \\
& \ \ \ \  - \text{Tr}\left(Q\tilde MQ^{-1}\left(\pa_\tau P P^\top \tilde M^\top + \tilde M^\top P\pa_\tau P^\top\right)\right). \label{E:MATRIXIDENTITY}
\end{align}
The matrix identity~\eqref{E:MATRIXIDENTITY} is a simple algebraic manipulation, where we used the identity
\begin{align*}
PM^\top_\tau P^\top  & =\pa_\tau(PM^\top P^\top) - \pa_\tau P M^\top P^\top - P M^\top \pa_\tau P^\top \\
& = \pa_\tau(\tilde M^\top) - \pa_\tau P P^\top \tilde M^\top - \tilde M^\top P\pa_\tau P^\top
\end{align*}
but it allows us to extract a positive-definite 
energy quantity modulo a small error term. Applying \eqref{E:MATRIXIDENTITY} to $M=\nabla_\eta\der\uptheta$, we have 
\begin{align}
& \Lambda_{\ell j}\Lambda^{-1}_{im}(\nabla_\eta\der\uptheta)^i_j(\nabla_\eta\der\uptheta)^m_{\ell,\tau} 
= (\Lambda (\nabla_\eta\der\uptheta))_{\ell i}(\Lambda^{-1} \pa_\tau (\nabla_\eta\der\uptheta)^\top)_{i\ell} \notag \\
& =  \frac12\frac{d}{d\tau}\left(\sum_{i,j=1}^3 d_id_j^{-1}((\mathscr M_{a,\beta})^j_{i})^2\right)-  \frac12\sum_{ij}\frac d{d\tau}\left(d_id_j^{-1}\right)(\left(\mathscr M_{a,\beta}\right)^j_i)^2 \notag \\
& \ \ \ \  - \text{Tr}\left(Q\mathscr M_{a,\beta}Q^{-1}\left(\pa_\tau P P^\top \mathscr M_{a,\beta}^\top + \mathscr M_{a,\beta}^\top P\pa_\tau P^\top\right)\right)
\end{align}
where we recall the definitions of $\mathscr M_{a,\beta}$ and $\mathscr N_\nu$ given in~\eqref{E:MNDEF}. This proves~\eqref{E:KEYONE} and the proof of~\eqref{E:KEYTWO} is completely analogous.
\end{proof}

%%%%%%%%%%%%%%%%%%%%%%%%%%%%%%%%%%%%%%%%%%%%%

The next lemma reveals how vector fields $\pa_r$ and $\slashed\nabla_i,$ $i=1,2,3$, commute with the degenerate (spatial) differential operator coming from the pressure gradient term in the momentum equation.
This commutation lemma in particular provides a clue as to why the strength of the weights used in our norm scales proportionally to the number of spatial derivatives used. 

\begin{lemma}\label{Lem:comm} 
Let $q \in \mathbb R_+$  be given. Then for a given $2$-tensor ${\bf T}:\Omega\to\mathbb M^{3\times3}$ and any $i,j\in\{1,2,3\}$ the following 
commutation identities hold
\begin{align}
\pa_r\left[ \frac{1}{ w^q} ( w^{1+q}  {\bf T}^k_i),_k  \right] & =  \frac{1}{ w^{1+q}} ( w^{2+q} \pa_r {\bf T}^k_i),_k + \mathcal C_i^{q+1}[{\bf T}], \label{Commr}\\ 
\sn_j\left[ \frac{1}{ w^q} ( w^{1+q}  {\bf T}^k_i),_k  \right] &=  \frac{1}{ w^{q}} ( w^{1+q} \slashed\nabla_j{\bf T}^k_i),_k + \mathcal C_{ij}^{q}[{\bf T}], \label{Commphi}
\end{align}
where the commutators $ \mathcal C_i^{q+1}[{\bf T}]$ and $ \mathcal C_{ij}^{q}[{\bf T}]$ are given as follows:
\begin{align}
\mathcal C_i^{q+1}[{\bf T}] &= \left(\pa_r w-\frac{ w}{r}\right)\sn_k {\bf T}^k_i+(1+q) \pa_r w,_k {\bf T}^k_i \label{Commrc} \\
\mathcal C_{ij}^{q}[{\bf T}]&=  w \left(\frac{y_j\sn_k}{r^2} + \frac{\delta_{kj}r^2- {y}_k {y}_j}{r^3}\pa_r \right){\bf T}^k_i  + (1+q)\sn_j w,_k{\bf T}^k_i \ \ \text{for } j=1,2,3. \label{Commphic}
\end{align}
\end{lemma}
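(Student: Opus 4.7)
The plan is to treat both identities as purely algebraic computations, reducing everything to a commutator between a tangential differential operator ($\pa_r$ or $\sn_j$) and the Cartesian divergence $\pa_k$. First I expand the inner expression via Leibniz,
\[
\frac{1}{ w^q}( w^{1+q}{\bf T}^k_i),_k = (1+q)\, w,_k {\bf T}^k_i +  w\,{\bf T}^k,_{i,k},
\]
and perform the analogous expansion of the target right-hand sides $\tfrac{1}{ w^{1+q}}( w^{2+q}\pa_r{\bf T}^k_i),_k$ and $\tfrac{1}{ w^{q}}( w^{1+q}\sn_j{\bf T}^k_i),_k$. Differentiating the left side by $\pa_r$ (resp.\ $\sn_j$) and subtracting the ``formally matching'' pieces on the right leaves three residues: (a) a term in which the outer operator falls on $ w,_k$, which immediately produces the $(1+q)\pa_r  w,_k$ and $(1+q)\sn_j  w,_k$ pieces in \eqref{Commrc}--\eqref{Commphic}; (b) a term of the form $ w[\pa_r,\pa_k]{\bf T}^k_i$ or $ w[\sn_j,\pa_k]{\bf T}^k_i$; and (c) a ``crossed'' residue combining $\pa_r  w\cdot {\bf T}^k,_{i,k}$ with $ w,_k\pa_r{\bf T}^k_i$, or the corresponding $\sn_j$ version.

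For \eqref{Commr}, I compute $[\pa_r,\pa_k] = -\tfrac{1}{r}\pa_k + \tfrac{y_k}{r^2}\pa_r$ directly from $\pa_r = \tfrac{y_s}{r}\pa_s$. Contracted against ${\bf T}^k_i$ over $k$ and decomposed via $\pa_k = \sn_k + \tfrac{y_k}{r}\pa_r$, the $\tfrac{y_k}{r^2}\pa_r$ piece cancels and what remains is $-\tfrac{1}{r}\sn_k{\bf T}^k_i$, supplying the $-\tfrac{ w}{r}\sn_k{\bf T}^k_i$ contribution. Handling the crossed residue requires the radial identity $ w,_k = \tfrac{y_k}{r}\pa_r  w$, which holds because the background enthalpy $ w$ from \eqref{E:BARW} depends only on $|y|$; substituting this and again splitting $\pa_k{\bf T}^k_i$ into its radial and angular parts cancels the two radial contributions and leaves exactly $\pa_r  w\,\sn_k{\bf T}^k_i$, completing the coefficient $(\pa_r  w - \tfrac{ w}{r})$ in \eqref{Commrc}.

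For \eqref{Commphi}, the radial symmetry of $ w$ makes $\sn_j  w = 0$, so the crossed residue vanishes and only the commutator $ w[\sn_j,\pa_k]{\bf T}^k_i$ needs to be identified. Writing $\sn_j f = (\delta_{js} - \tfrac{y_j y_s}{r^2})\pa_s f$, a direct calculation gives
\[
\sn_j{\bf T}^k,_{i,k} - (\sn_j{\bf T}^k_i),_k = -\Big(\delta_{js} - \tfrac{y_j y_s}{r^2}\Big),_k \pa_s{\bf T}^k_i = \Big(\tfrac{\delta_{jk}y_s + y_j\delta_{sk}}{r^2} - \tfrac{2y_j y_s y_k}{r^4}\Big)\pa_s{\bf T}^k_i.
\]
Splitting $\pa_s = \sn_s + \tfrac{y_s}{r}\pa_r$ (so that $y_s\pa_s = r\pa_r$) and collecting, the $y_j y_k/r^3$ contributions combine into $\tfrac{\delta_{kj}r^2 - y_k y_j}{r^3}\pa_r{\bf T}^k_i$, while the remaining tangential piece becomes $\tfrac{y_j}{r^2}\sn_k{\bf T}^k_i$, reproducing \eqref{Commphic}. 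The only (purely bookkeeping) obstacle will be keeping this radial-versus-angular split tidy so that the various $y_j y_k/r^3$ terms combine with the correct signs; no analytic ingredient beyond the radial symmetry of $ w$ is required.
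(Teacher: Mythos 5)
Your proposal is correct and takes essentially the same route as the paper: expand via Leibniz, match the desired right-hand side by regrouping the $w,_k\pa_r{\bf T}^k_i$ coefficient, and reduce the residue to the commutators $[\pa_r,\pa_k]$ and $[\sn_j,\pa_k]$ together with the radial identity $w,_k=\tfrac{y_k}{r}\pa_r w$. The only cosmetic difference is that the paper cites its pre-proved commutator Lemma~\ref{L:COMMUTATORS} for $[\pa_r,\pa_k]=-\tfrac1r\sn_k$ and $[\pa_k,\sn_j]$, whereas you recompute these from the coordinate expressions on the spot.
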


\begin{proof} Note that 
\begin{align*}
&\pa_r\left[ \frac{1}{ w^q} ( w^{1+q}  {\bf T}^k_i),_k  \right] = \pa_r \left[  w {\bf T}^k_i,_k + (1+q)  w,_k {\bf T}^k_i\right] \\
&=  w \pa_r {\bf T}^k_i,_k + \pa_r  w {\bf T}^k_i,_k + (1+q)  w,_k \pa_r {\bf T}^k_i + (1+q) \pa_r w,_k {\bf T}^k_i \\
&=  w (\pa_r {\bf T}^k_i),_k + (2+q)   w,_k \pa_r {\bf T}^k_i  + w[\pa_r,\pa_k]{\bf T}^k_i  
+ \pa_r  w {\bf T}^k_i,_k -  w,_k \pa_r {\bf T}^k_i   + (1+q) \pa_r w,_k {\bf T}^k_i \\
&= \frac{1}{ w^{1+q}} ( w^{2+q} \pa_r {\bf T}^k_i),_k  - \frac{ w}{r} \sn_k {\bf T}^k_i 
+ \pa_r  w {\bf T}^k_i,_k -  w,_k \pa_r {\bf T}^k_i   + (1+q) \pa_r w,_k {\bf T}^k_i
\end{align*} 
Since $\pa_r = \frac{ y}{r}\cdot \nabla$ and $ w=\frac{\delta}{2(1+\alpha)}(1-| y|^2) = 
\frac{\delta}{2(1+\alpha)}(1-r^2) $, we deduce 
\begin{align*}
 \pa_r  w {\bf T}^k_i,_k -  w,_k \pa_r {\bf T}^k_i = \pa_r w\sn_k{\bf T}^k_i
\end{align*}
which finishes the proof of \eqref{Commr}. 
Using $\sn_j w=0$, we obtain
\begin{align*}
&\sn_j\left[ \frac{1}{ w^q} ( w^{1+q}  {\bf T}^k_i),_k  \right] = \sn_j\left[  w {\bf T}^k_i,_k + (1+q)  w,_k {\bf T}^k_i\right] \\
&=  w \sn_j {\bf T}^k_i,_k + (1+q)  w,_k \sn_j {\bf T}^k_i + (1+q) \sn_j  w,_k {\bf T}^k_i \\
&=  w (\sn_j {\bf T}^k_i),_k + w[\sn_j,\pa_k]{\bf T}^k_i +  (1+q)   w,_k \sn_j{\bf T}^k_i  
  + (1+q) \sn_j  w,_k {\bf T}^k_i\\
  &= \frac{1}{ w^q} ( w^{1+q} \sn_j {\bf T}^k_i),_k  + \mathcal C_{ij}^{q}[{\bf T}]
  \end{align*}
where we used the commutator formula~\eqref{E:PAISNJ} and the definition~\eqref{Commphic}. 
\end{proof}

\begin{remark}\label{R:Comm}
The commutators appearing in the previous lemma consist of lower order terms with respect to both the number of derivatives and the strength of weights in $ w$. 
Even though the term $\pa_r w\sn_k{\bf T}^k_i$ appearing in the expression $\mathcal C_i^{q+1}$~\eqref{Commrc} is potentially very dangerous as it looses a factor of $ w$, 
it is nevertheless of  lower order because it contains the term ${\bf T}^k_i$ with purely tangential derivatives. 
Additionally all the coefficients appearing in the commutators in Lemma~\ref{Lem:comm} are smooth 
 in the annulus $\frac14 \leq r \leq 1$ where the commutation formulas~\eqref{Commr}--\eqref{Commphi} will be actually applied. 
\end{remark}

\begin{proposition}\label{P:ENERGYESTIMATE1}
Let $(\uptheta,{\bf V} ):\Omega\to\mathbb R^3\times \mathbb R^3$ be a unique solution to~\eqref{E:THETAEQUATION}--\eqref{E:THETAINITIAL} defined on some time interval $[0,T]$
and satisfying the a priori assumptions~\eqref{E:assumption}--\eqref{E:APRIORI}. 
Then for any $\tau\in [0,T]$ the following energy inequality holds:
\begin{align}
& \mathcal E^N(\tau) +\int_0^\tau \mathcal D^N(\tau')\,d\tau' \lesssim  \norm(0)  +\vortnorm[\uptheta](\tau) + \int_0^\tau  (\norm(\tau'))^{\frac12} (\vortnorm[{\bf V}](\tau'))^{\frac12}\,d\tau'  \notag \\
& \ \ \ \  + \kappa\norm(\tau) + \int_0^\tau e^{-\mu_0\tau'} \norm(\tau') d\tau'\label{E:ENERGYMAIN0}
\end{align}
where $0<\kappa\ll 1$ is a small constant and the constant $\mu_0$ is defined in \eqref{E:MUZERODEF}.
\end{proposition}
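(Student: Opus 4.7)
The plan is to run a high-order weighted energy identity in the Jang--Masmoudi style~\cite{JaMa2015}, now carrying the extra $\tau$-dependent matrix $\Lambda$. I would treat the near-boundary portion with $\der$ and the cutoff $\psi$ in detail; the interior portion with $\pa^\nu$ and $1-\psi$ is analogous and easier, since $w$ stays bounded below on $\supp(1-\psi)$. First, apply $\der = \pa_r^a \sn^\beta$ to the $i$-th component of~\eqref{E:THETAEQUATION} and commute $\der$ through the degenerate elliptic operator $\tfrac1{w^\alpha}(w^{1+\alpha}\cdot)_{,k}$ via Lemma~\ref{Lem:comm}: each $\pa_r$ raises the outer weight by one power of $w$, $\sn$ preserves it, so that the principal pressure contribution becomes $\Lambda_{ij}\,w^{-a}(w^{1+\alpha+a}\der(\A^k_j\J^{-1/\alpha}-\delta^k_j))_{,k}$---exactly the weight $w^{1+\alpha+a}$ appearing in $\energy$---modulo a lower-order stack of commutators $\sum\mathcal C$. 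Next, pair the resulting equation against the multiplier $\Lambda^{-1}_{im}\der{\bf V}^m\psi$ and integrate over $\Omega$.

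Three coercive contributions should emerge. The $w^\alpha\mu^{3\gamma-3}\pa_\tau{\bf V}$ and damping $w^\alpha\mu^{3\gamma-3}\tfrac{\mu_\tau}\mu{\bf V}$ terms combine, after redistributing one $\pa_\tau$ onto $\mu^{3\gamma-3}$, into $\tfrac12\pa_\tau(\mu^{3\gamma-3}\langle\Lambda^{-1}\der{\bf V},\der{\bf V}\rangle w^\alpha\psi) + \tfrac{5-3\gamma}2\mu^{3\gamma-3}\tfrac{\mu_\tau}\mu\langle\Lambda^{-1}\der{\bf V},\der{\bf V}\rangle w^\alpha\psi$: precisely the kinetic piece of $\energy$ and the full dissipation $\dissipation$, sign-definite exactly when $\gamma\le\tfrac53$. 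The elastic $\delta w^\alpha\Lambda_{i\ell}\der\uptheta^\ell$ paired against $\Lambda^{-1}_{im}\der{\bf V}^m$ collapses via $\Lambda\Lambda^{-1}=I$ to $\tfrac12\pa_\tau(\delta w^\alpha|\der\uptheta|^2\psi)$, equivalent by Proposition~\ref{P:NORMENERGY} to the elastic part of $\energy$. For the pressure term I would integrate by parts: the $w^{1+\alpha+a}|_{\pa\Omega}=0$ weight kills the $\pa\Omega$ boundary, and $\psi$-derivatives are localized in the annulus $\{\tfrac14<r<\tfrac34\}$ where $w$ is bounded below. Expanding $\A^k_j\J^{-1/\alpha}-\delta^k_j$ around $\uptheta=0$ and isolating the principal quadratic piece produces the algebraic trace $\Lambda_{\ell j}(\nabla_\eta\der\uptheta)^i_j\Lambda^{-1}_{im}(\pa_\tau\nabla_\eta\der\uptheta)^m_\ell$ to which Lemma~\ref{L:KEYLEMMA} applies, yielding $\tfrac12\pa_\tau(\sum d_id_j^{-1}((\mathscr M_{a,\beta})^j_i)^2\J^{-1/\alpha}w^{1+\alpha+a}\psi)$ together with the $\tfrac1\alpha(\text{div}_\eta\der\uptheta)^2$ piece, modulo the error $\mathcal T_{a,\beta}$ and an antisymmetric $\text{Curl}_{\Lambda\A}$-residue.

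Assembling the pieces, I expect the identity $\tfrac{d}{d\tau}\energy + \dissipation \le R(\tau)$, where the remainder $R$ groups: (i) $\pa_\tau\Lambda$, $\pa_\tau\Lambda^{-1}$, $\Gamma^\ast$, $\mathcal T_{a,\beta}$, $\mathcal T_\nu$ and $\pa_\tau P$, $\pa_\tau Q$ contributions, each decaying at least like $e^{-\mu_0\tau}$ by Lemma~\ref{L:GAMMASTARASYMP} and integrating to $\int_0^\tau e^{-\mu_0\tau'}\norm(\tau')\,d\tau'$; (ii) the commutators $\mathcal C$ from Lemma~\ref{Lem:comm} and the quadratic-and-higher nonlinear remainders from expanding $\A\J^{-1/\alpha}-I$, all controlled via Moser products and the Hardy--Sobolev embeddings of Appendix~\ref{A:HARDY}, contributing $\kappa\norm(\tau)$ once $\varepsilon_0$ is small enough (here the weighting pattern of Remark~\ref{R:Comm} is essential -- the commutator weights are exactly those absorbed by $\energy$); (iii) the $\text{Curl}_{\Lambda\A}$-residues, absorbed directly into $\vortnorm[\uptheta](\tau)$, with any cross-term containing $\der{\bf V}$ handled by Cauchy--Schwarz into $\int_0^\tau(\norm(\tau'))^{1/2}(\vortnorm[{\bf V}](\tau'))^{1/2}\,d\tau'$; and (iv) initial data, bounded by $\norm(0)$ via Proposition~\ref{P:NORMENERGY}. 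Integrating in $\tau$ over $[0,\tau]$ delivers~\eqref{E:ENERGYMAIN0}.

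The main obstacle is the third step---the pressure term. Because $\Lambda(\tau)$ is genuinely non-scalar, the trace $\Lambda_{\ell j}(\nabla_\eta\der\uptheta)^i_j\Lambda^{-1}_{im}(\pa_\tau\nabla_\eta\der\uptheta)^m_\ell$ produced by IBP is not a priori a clean $\pa_\tau$-derivative of a positive-definite quadratic form; it is precisely Lemma~\ref{L:KEYLEMMA}, via the orthogonal diagonalization $\Lambda=P^\top QP$ and the conjugated variables $\mathscr M_{a,\beta}=P(\nabla_\eta\der\uptheta)P^\top$, which extracts $\sum d_id_j^{-1}((\mathscr M_{a,\beta})^j_i)^2$ as a coercive $\pa_\tau$, pushing the remainder into the harmless $\mathcal T_{a,\beta}$ thanks to the sharp ODE asymptotics of $\Lambda$, $P$, $Q$ recorded in Appendix~\ref{A:ASYMPTOTIC}. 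The antisymmetric residue of this manipulation is unavoidable and must be paid for by $\vortnorm$---which is the structural reason the vorticity estimates of Section~\ref{S:VORTICITY} had to be developed independently before the energy identity.
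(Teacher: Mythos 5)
Your proposal follows the paper's proof essentially step for step: multiplying the $\der$-commuted equation by $\psi\,w^{\alpha+a}\Lambda^{-1}_{im}\der{\bf V}^m$, recovering the kinetic and dissipative pieces from the $\mu$-weighted time derivatives, integrating the pressure term by parts, decomposing $\Lambda_{ij}\der(\A^k_j\J^{-1/\alpha}-\delta^k_j)$ into the $\text{Curl}_{\Lambda\A}$ part plus a $\Lambda M\Lambda^{-1}M^\top_\tau$-trace handled by Lemma~\ref{L:KEYLEMMA}, and paying for the curl residue via a $\tau$-integration-by-parts with $\vortnorm[\uptheta]$ and the Cauchy--Schwarz cross-term $\int\norm^{1/2}\vortnorm[{\bf V}]^{1/2}$. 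You correctly identify both the central algebraic obstruction (that $\Lambda$ is non-scalar, making the diagonalization $\Lambda=P^\top QP$ and the conjugated variables $\mathscr M_{a,\beta}$ indispensable) and the structural reason that the vorticity estimates must precede the energy estimate; the only omission relative to the paper is the separate zeroth-order step (where no cutoff is needed), which is a bookkeeping detail rather than a gap.
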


\begin{proof}
To simplify notations, we shall denote by $\mathcal R$ any lower order error term satisfying an estimate of the form
\be\label{E:ERRORR}
\Big|\int_0^\tau \int_{\Omega}\mathcal R \,dx\,d\tau'\Big|
\lesssim  \norm(0) + \kappa\norm(\tau) + \int_0^\tau e^{-\mu_0\tau'} \norm(\tau') d\tau'.
\ee
{\bf Step 1. Zeroth order estimate.} We start with $a+|\beta|=0$ and $|\nu|=0$. In this case, there is no need to obtain two separate estimates by using the cutoff function $\psi$.  
Multiply \eqref{E:THETAEQUATION} by $\Lambda_{im}^{-1}\pa_\tau \uptheta^m $ and integrate over $\Omega$ to obtain 
\begin{align*}
\int_\Omega  w^\alpha\left(\mu^{3\gamma-3}\pa_{\tau\tau}\uptheta_i+\mu^{3\gamma-4}\mu_\tau\pa_\tau\uptheta_i+2\mu^{3\gamma-3}\Gamma^*_{ij}\pa_\tau\uptheta_j  +  \delta \Lambda_{i\ell}\uptheta_\ell \right) \Lambda_{im}^{-1}\pa_\tau \uptheta^m    dy\\
+\int_\Omega  \left( w^{1+\alpha} \Lambda_{ij}\left(\A^k_j\J^{-\frac1\alpha}-\delta^k_j\right)\right),_k \Lambda_{im}^{-1}\pa_\tau \uptheta^m  dy = 0. 
\end{align*}
The first integral can be rewritten as 
\begin{align*}
\frac12\frac{d}{d\tau}\left(\mu^{3\gamma-3} \int_\Omega  w^\alpha  \langle \Lambda^{-1}\pa_\tau\uptheta, \pa_\tau\uptheta \rangle dy  +\delta \int_\Omega w^\alpha |\uptheta |^2 dy\right)+\frac{5-3\gamma}{2}\mu^{3\gamma-4}\mu_\tau \int_\Omega  w^\alpha \langle \Lambda^{-1}\pa_\tau\uptheta, \pa_\tau\uptheta \rangle dy\\
-\frac{\mu^{3\gamma-3}}{2}\int_\Omega  w^\alpha \langle \pa_\tau\Lambda^{-1}\pa_\tau\uptheta, \pa_\tau \uptheta \rangle dy + 2\mu^{3\gamma-3} \int_\Omega  w^\alpha \langle \Lambda^{-1}\pa_\tau\uptheta, \Gamma^\ast \pa_\tau \uptheta \rangle  dy 
\end{align*}
For the second integral, by divergence theorem (integration by parts) and by using the following identities
\[
\A^k_j\J^{-\frac1\alpha} - \delta^k_j = ( \A^k_j-\delta^k_j) \J^{-\frac1\alpha} + \delta^k_j (\J^{-\frac1\alpha} -1) 
\]
\[
\A^k_j-\delta^k_j = \A^k_l  \delta^l_j  -\A^k_l [D\eta]^l_j  = \A^k_l (\delta^l_j - [D  y]^l_j - [D\uptheta]^l_j) = -\A^k_l  [D\uptheta]^l_j 
\]
we obtain 
\[
\begin{split}
&- \int_\Omega  w^{1+\alpha} \Lambda_{ij}\left(\A^k_j\J^{-\frac1\alpha}-\delta^k_j\right) \Lambda_{im}^{-1}\pa_\tau \uptheta^m,_k   dy  \\
&= \int_\Omega  w^{1+\alpha} \J^{-\frac1\alpha} \Lambda_{ij} \A^k_l \uptheta^l,_j  \Lambda_{im}^{-1}\pa_\tau \uptheta^m,_k   dy - \int_\Omega  w^{1+\alpha} (\J^{-\frac1\alpha} -1) \pa_\tau \uptheta^k,_k  dy\\
&=: (i)+(ii)
\end{split}
\]
Now for $(i)$, by rewriting $\Lambda_{ij}=\Lambda_{ip}\delta^j_p = \Lambda_{ip} \A^j_l\eta^l,_p= 
 \Lambda_{ip}(\A^j_p +  \A^j_l\uptheta^l,_p)$, we have 
 \[
 \begin{split}
 (i)&= \int_\Omega  w^{1+\alpha} \J^{-\frac1\alpha}  \Lambda_{ip} \A^j_p  \uptheta^\ell,_j  \Lambda_{im}^{-1} \A^k_\ell \pa_\tau \uptheta^m,_k   dy+ \int_\Omega  w^{1+\alpha} \J^{-\frac1\alpha} \Lambda_{ip} \A^j_l\uptheta^l,_p \A^k_\ell \uptheta^\ell,_j  \Lambda_{im}^{-1}\pa_\tau \uptheta^m,_k   dy \\
 &=\int_\Omega  w^{1+\alpha}  \J^{-\frac1\alpha} \Lambda_{\ell j}[\nabla_\eta\uptheta]^i_j  \Lambda_{im}^{-1} [\nabla_\eta \pa_\tau \uptheta ]^m_\ell dy +\int_\Omega  w^{1+\alpha}  \J^{-\frac1\alpha} [\text{{Curl}}_{\Lambda\A} \uptheta]^\ell_i  \Lambda_{im}^{-1} [\nabla_\eta \pa_\tau \uptheta ]^m_\ell dy \\
 &\quad+ \int_\Omega  w^{1+\alpha} \J^{-\frac1\alpha}  \A^j_l\uptheta^l,_m \A^k_\ell \uptheta^\ell,_j \pa_\tau \uptheta^m,_k   dy\\
 &=: (i)_1+ (i)_2 +(i)_3. 
 \end{split}
 \]
By Lemma \ref{L:KEYLEMMA}, we rewrite the first term 
\[
\begin{split}
(i)_1 &= \frac{1}{2}\frac{d}{d\tau} \int_\Omega  w^{1+\alpha} \J^{-\frac1\alpha}\sum_{i,j=1}^3 d_i d_j^{-1}(({\mathscr M}_{0,0})^j_i)^2 dy  + \int_\Omega  w^{1+\alpha} \J^{-\frac1\alpha} \mathcal T_{0,0} dy \\
&\quad+ 
\frac{1}{2\alpha}\int_\Omega  w^{1+\alpha} \J^{-\frac1\alpha-1}\J_\tau \sum_{i,j=1}^3 d_i d_j^{-1}(({\mathscr M}_{0,0})^j_i)^2 dy
\end{split}
\]
The first term contributes to the energy.  The second term contains $\tau-$derivative of $ P$ or $d_i$ from $\mathcal T_{0,0}$ decaying exponentially fast and the third term contains $D\uptheta_\tau$ from $\J_\tau$, and therefore they are bounded by $e^{-\mu_0\tau} \mathcal S^N(\tau)$. Now the term $(i)_2$ contributes to the second right-hand side term in \eqref{E:ENERGYMAIN0}. The term $(i)_3$ is bounded by $e^{-\mu_0\tau} \mathcal S^N(\tau)$. 

For $(ii)$, we first note that 
 \[
\J = \det [D\eta]= \det [\text{{\bf Id}} + D\uptheta] = 1 + \text{Tr} [D\uptheta] + O(|D\uptheta|^2). 
\]
By writing $1- \J^{-\frac1\alpha}=\frac1\alpha \text{Tr}[D\uptheta] + R_J$ where $R_J=O(|D\uptheta|^2)$, we deduce that 
\[
|(ii)|\lesssim e^{-\mu_0\tau} \mathcal S^N(\tau)
\]
 To be consistent with the definition of our energy $\mathcal E^N$, we may recover the zeroth order divergence energy in $\mathcal E^N$ by adding the following identity
  \[
 \begin{split}
 \frac{1}{2}\frac{d}{d\tau} \frac1\alpha \int_\Omega  w^{1+\alpha} \J^{-\frac1\alpha} |\text{div}_\eta \uptheta |^2 dy =  \frac{1}{2\alpha} \int_\Omega  w^{1+\alpha} \left( \pa_\tau (\J^{-\frac1\alpha} )|\text{div}_\eta \uptheta |^2
 + \J^{-\frac1\alpha} \pa_\tau(|\text{div}_\eta \uptheta |^2) \right) dy\\
  \end{split}
 \]
 where the right-hand side is clearly bounded by $e^{-\mu_0\tau} \mathcal S^N(\tau)$ due to the presence of $\tau-$derivatives of $\uptheta$. 
This complete the zeroth order estimate. 

\

\noindent{\bf Step 2. High order estimates.} In order to obtain high order estimates, we will need to deal with degeneracy caused by vacuum and having correct weights and commuting with right vector fields near the boundary is important. This is why we invoke the cutoff function $\psi$ to localize the difficulty. We focus on getting the estimates of $\mathcal E^N$ involving $\psi$. Here we will crucially use Lemma \ref{Lem:comm}. 

Let $(a,\beta)$ where $a+|\beta|\geq 1$ be given. We first rewrite~\eqref{E:THETAEQUATION}  as 
\be
\mu^{3\gamma-4} \left(\mu\pa_{\tau\tau}\uptheta_i+\mu_\tau\pa_\tau\uptheta_i+2\mu\Gamma^*_{ij}\pa_\tau\uptheta_j\right) +\delta \Lambda_{i\ell}\uptheta_\ell + \frac{1}{ w^\alpha} \left( w^{1+\alpha} \Lambda_{ij}\left(\A^k_j\J^{-\frac1\alpha}-\delta^k_j\right)\right),_k = 0. \label{E:THETAEQUATION00}
\ee
We apply $\pa_r^a\slashed\nabla^\beta$  to~\eqref{E:THETAEQUATION00} by using \eqref{Commr} and \eqref{Commphi} and multiply the resulting equation by the cutoff function $\psi $:
\begin{align}\label{E:ENERGYDERIVATION1}
&\psi \mu^{3\gamma-4} \left(\mu\pa_{\tau\tau}\der\uptheta_i+\mu_\tau\pa_\tau\der\uptheta_i+2\mu \Gamma^*_{ij}\pa_\tau\der\uptheta_j \right)+\delta \psi  \Lambda_{i\ell}\der\uptheta_\ell  \notag \\
& \ \ \ \ +  \psi \frac{1}{ w^{\alpha +a}}\left( w^{1+\alpha+a} \Lambda_{ij} \der \left(\A^k_j\J^{-\frac1\alpha}-\delta^k_j\right)\right),_k = - \psi  \mathcal R_i^{a,\beta},
\end{align}
where the error terms $\mathcal R^{a,\beta}_i$ can be written inductively as follows. For any given index $\beta$ with $|\beta|=n$ we represent it as $\beta=b^n:=(b_n,...,b_1)$, $b_i\in \{1,2,3\}$ so that $b^n=(b_n, b^{n-1})$ and $b^1=(b_1)$. Then for $a=0$ we have the following inductive formula for $\mathcal R^{a,\beta}_i$
\[
\begin{split}
&\mathcal R^{0,b^1}_i = \mathcal C_{ib_1}^{\alpha}[\A \J^{-\frac1\alpha} - \text{{\bf Id}} ] \\
&\mathcal R^{0,b^n}_i = \mathcal C_{ib_n}^{\alpha}[ \slashed\nabla^{b^{n-1}} (\A \J^{-\frac1\alpha} - \text{{\bf Id}} )] +  \slashed\nabla_{b_n} \mathcal R^{0,b^{n-1}}_i \text{ for }n\geq 2
\end{split}
\]
where $\mathcal C_{i j}^{\alpha}[\cdot]$ is defined in \eqref{Commphic} and we recall that $\alpha=\frac1{\gamma-1}$. 
Now in general for $a\geq 1$, by setting  $\mathcal R^{0,0}=0$, we have 
\[
\begin{split}
\mathcal R^{a,\beta}_i = \mathcal C_{i}^{\alpha+a}[ \pa_r^{a-1}\slashed\nabla^\beta (\A \J^{-\frac1\alpha} - \text{{\bf Id}})] +  \pa_r \mathcal R^{a-1, \beta}_i 
\end{split}
\]
where $\mathcal C_i^{\alpha+1}[\cdot]$ is defined in \eqref{Commrc}. 
 Note that $\mathcal R^{a,\beta}_i$ is a lower order term in the sense that it satisfies the error estimate
\begin{align}
\|\mathcal R^{a,\beta}_i\|^2_{a+\alpha,\psi} \lesssim \norm \left((1+P(\norm)\right),
\end{align}
where $P$ is a polynomial of order at least 1. The proof follows from the Proposition~\ref{P:EMBEDDING}. See also Remark \ref{R:Comm}. 

We will now derive the energy estimates from \eqref{E:ENERGYDERIVATION1}. 
Multiplying~\eqref{E:ENERGYDERIVATION1} by $ w^{\alpha+a}\Lambda^{-1}_{im}\pa_\tau\der\uptheta^m$ and integrating over $\Omega$ we arrive at
\begin{align}
&\frac12\frac{d}{d\tau}\left(\mu^{3\gamma-3} \int_\Omega \psi \, w^{\alpha+a} \langle\Lambda^{-1}\der{\bf V},\,\der{\bf V}\rangle \,dy+\delta \int_\Omega\psi  w^{\alpha+a}\, |\der\uptheta |^2 
\,dy \right)  \notag \\
&+\frac{5-3\gamma}{2}\mu^{3\gamma-3} \frac{\mu_\tau}{\mu}\int_\Omega \psi  w^{\alpha+a}\,\langle \Lambda^{-1}\der{\bf V},\,\der{\bf V}\rangle dy \notag \\
& - \frac12\mu^{3\gamma-3}\int_\Omega
\psi  w^{\alpha+a}\, \left\langle \left[\pa_\tau\Lambda^{-1}-4 \Lambda^{-1}\Gamma^*\right]\der{\bf V},\der{\bf V}\right\rangle dy\notag\\
& + \int_\Omega  \psi \left( w^{1+\alpha+a} \Lambda_{ij} \der \left(\A^k_j\J^{-\frac1\alpha}-\delta^k_j\right)\right),_k \Lambda^{-1}_{im} \pa_\tau\der\uptheta^m \,dy  \notag \\
&=\int_\Omega \psi  w^{\alpha+a}\mathcal R^i_{a,\beta} \Lambda^{-1}_{im} \der\uptheta^m_\tau \,dy. \label{E:ENERGYFIRST}
\end{align}
Note that 
\[
\begin{split}
\pa_\tau \Lambda^{-1}-4 \Lambda^{-1}\Gamma^* = \pa_\tau(O^\top O) - 4 O^\top OO^{-1}O_\tau = - \pa_\tau\Lambda^{-1}  + 2(O^\top_\tau O - O^\top O_\tau),  \\
\end{split}
\]
where the orthogonal matrix $O$ is defined in~\eqref{E:DECOMPOSITION}.
Since $(O^\top_\tau O - O^\top O_\tau) ^\top = -(O^\top_\tau O - O^\top O_\tau) $, the third line of~\eqref{E:ENERGYFIRST} reduces to 
\[
 \frac12\mu^{3\gamma-3}\int_\Omega
\psi  w^{\alpha+a}\, \left\langle \pa_\tau\Lambda^{-1} \der{\bf V},\der{\bf V}\right\rangle dy. 
\]
Hence, all lines except the fourth one in the left-hand side of~\eqref{E:ENERGYFIRST} are written in desirable forms. The fourth line will give rise to the full gradient energy at the expense of
a top order term, which will contain the favorable $\text{Curl}_{\Lambda\A}$-component. To see this, we start with the following key formula~\cite{JaMa2015}:
\begin{align}
\pa(\A^k_j\J^{-\frac1\alpha}) & = - \J^{-\frac1\alpha}\A^k_\ell\A^s_j\pa\eta^\ell,_s -\tfrac1\alpha \J^{-\frac1\alpha}\A^k_j\A^s_\ell\pa\eta^\ell,_s 
\end{align}
where $\pa$ is either $\pa_i$, $i=1,2,3$, or $\pa = \pa_\tau$. This formula is still valid for $\pa_r$ or $\sn_i$ if we write the formula with more care: 
\[
X(\A^k_j\J^{-\frac1\alpha}) = - \J^{-\frac1\alpha}\A^k_\ell\A^s_jX(\eta^\ell,_s) -\tfrac1\alpha \J^{-\frac1\alpha}\A^k_j\A^s_\ell X(\eta^\ell,_s)  
\]
for $X\in \mathcal X$ where $\mathcal X=\{\pa_i, \sn_j,\pa_r\}_{i,j=1,2,3}$. We also note that 
\be\label{E:XETA}
X(\eta^\ell,_s)=X( y^\ell,_s + \uptheta^\ell,_s)=  X( \uptheta^\ell,_s) = (X \uptheta^\ell ),_s +
[X,\pa_s]\uptheta^\ell, \ \ X\in\mathcal X, 
\ee
where $[X,\pa_s]$ is given in Lemma \ref{L:COMMUTATORS}. 
Hence, for any multi-index $(a,\beta)$ so that $a+|\beta|> 0$, we have the identity
\begin{align}
&\Lambda_{ij}\der(\A^k_j\J^{-\frac1\alpha}-\delta^k_j)   \notag \\
& = - \Lambda_{ij}\left( \J^{-\frac1\alpha}\A^k_\ell\A^s_j\der\uptheta^\ell,_s +\tfrac1\alpha \J^{-\frac1\alpha}\A^k_j\A^s_\ell \der\uptheta^\ell,_s\right) + \mathcal C^{a,\beta,k}_i(\uptheta)  \notag \\
& =  - \J^{-\frac1\alpha}\A^k_\ell \left(\Lambda_{ij}\A^s_j\der\uptheta^\ell,_s - \Lambda_{\ell j}\A^s_j\der\uptheta^i,_s\right) - \J^{-\frac1\alpha}\A^k_\ell\Lambda_{\ell j}\A^s_j\der\uptheta^i,_s \notag \\
& \ \ \ \ -\tfrac1\alpha\J^{-\frac1\alpha}\Lambda_{ij}\A^k_j\A^s_\ell \der\uptheta^\ell,_s  + \mathcal C^{a,\beta,k}_i(\uptheta) \label{E:ENERGYCRUCIAL} \\
& = -  \J^{-\frac1\alpha}\A^k_\ell  [\text{Curl}_{\Lambda\A}\der\uptheta]^\ell_i -  \J^{-\frac1\alpha}\A^k_\ell \Lambda_{\ell j}
[\nabla_\eta \der\uptheta]^i_j\notag\\
& \ \ \ \ - \tfrac1\alpha\J^{-\frac1\alpha}\Lambda_{ij}\A^k_j \text{div}_\eta (\der\uptheta) + 
\mathcal C_i^{a,\beta,k}(\uptheta).\notag
\end{align}
All that matters in our determination of the lower order commutator  $\mathcal C^{a,\beta,k}_i$ is that for any choice of $i,k\in\{1,2,3\}$ its schematic form is given by
\begin{align}\label{E:ERRORDECOMPOSITION}
\mathcal C^{a,\beta,k}_i(\uptheta) = \sum_{g\in G_{a,\beta}} C_g(\pa\uptheta)\prod_{\{(a_i,\beta_i)\}\in G_{a,\beta}} \pa_r^{a_i}\sn^{\beta_i}\uptheta,
\end{align}
where the set $G_{a,\beta}$ denotes the set of all decompositions into finite sequences of pairs $\{(a_i,\beta_i)\}_{i=1,\dots,m}$ such that 
$\sum_{i=1}^m(a_i+|\beta_i|)<a+|\beta|$, $\beta_i\le\beta$ for all $i=1,\dots,m$, and $C_g(\cdot)$ are some smooth universal functions on the exterior of any ball around the origin $r=0$.
Such a decomposition is a simple consequence of the Leibniz rule and the commutator properties given in  Lemma~\ref{L:COMMUTATORS}.
Using Proposition~\ref{P:EMBEDDING} and the standard Moser inequalities we obtain the bound
\begin{align}\label{E:LOWERORDERTERM}
\|\mathcal C^{a,\beta,k}_i(\uptheta) \|_{\alpha+a,\psi}^2 + \|D\mathcal C^{a,\beta,k}_i(\uptheta)\|_{1+\alpha+a,\psi}^2
\lesssim \mathcal S^N(\tau).
\end{align}
For the sake of completeness we briefly explain the proof of the above estimate. Without loss of generality assume that $a_1+|\beta_1|=\max_{i=1,\dots.m}\{a_i+|\beta_i|\}$. Then
for any $j=2,\dots,m$, $a_j+|\beta_j|\le [\frac N2]$ and therefore by Proposition~\ref{P:EMBEDDING}, estimate~\eqref{embedding1}, we have the bound
\begin{align}
\prod_{j=2}^m \| w^{a_j}\pa_r^{a_j}\sn^{\beta_j}\uptheta\|_{L^\infty(\Omega)}\lesssim \mathcal S^N(\tau)^{\frac{m-1}{2}}.
\end{align}
Therefore, using the Cauchy-Schwarz inequality, the definition of $\mathcal S^N$, decomposition~\eqref{E:ERRORDECOMPOSITION} we conclude that 
\begin{align*}
\|\mathcal C^{a,\beta,k}_i(\uptheta) \|_{\alpha+a,\psi}^2 \lesssim \mathcal S^N(\tau)
\end{align*}
as claimed. A similar bound using~\eqref{embedding2} leads to the second inequality in~\eqref{E:LOWERORDERTERM}.  
We now plug~\eqref{E:ENERGYCRUCIAL} into the last term on the left-hand side of~\eqref{E:ENERGYFIRST} and after integration by parts we obtain 
\begin{align}
&\int_\Omega  \psi \left( w^{1+\alpha+a} \Lambda_{ij} \der \left(\A^k_j\J^{-\frac1\alpha}-\delta^k_j\right)\right),_k \Lambda^{-1}_{im} \pa_\tau\der\uptheta^m \,dy \notag \\
&= \int_\Omega\psi  w^{1+\alpha+a}   \J^{-\frac1\alpha} \left(\A^k_\ell  [\text{Curl}_{\Lambda\A}\der\uptheta]^\ell_i + \A^k_\ell \Lambda_{\ell j}[\nabla_\eta \der\uptheta]^i_j+ \tfrac1\alpha
\Lambda_{ij}\A^k_j \text{div}_\eta (\der\uptheta)\right) \notag \\
& \ \ \ \  \qquad \Lambda^{-1}_{im}\pa_\tau\left(\der\uptheta^m\right),_k\, dy \notag \\
 &\quad+\int_\Omega\psi,_k  w^{1+\alpha+a}   \J^{-\frac1\alpha} \left(\A^k_\ell  [\text{Curl}_{\Lambda\A}\der\uptheta]^\ell_i + \A^k_\ell \Lambda_{\ell j}[\nabla_\eta \der\uptheta]^i_j+ \tfrac1\alpha
\Lambda_{ij}\A^k_j \text{div}_\eta (\der\uptheta)\right) \notag \\
& \ \ \ \  \qquad \Lambda^{-1}_{im}\pa_\tau\der\uptheta^m\, dy \notag \\
&\quad +\int_\Omega \psi \left(  w^{1+\alpha+a} \mathcal C^{a,\beta,k}_i \right),_k \Lambda^{-1}_{im}\pa_\tau\der\uptheta^m \,dy \label{E:ENERGYFIRST1} \\
&=: I_1 + I_2 +I_3.\notag
\end{align}
The rest of the proof will be devoted to the estimation of $I_1$, $I_2$ and $I_3$. 

\

\noindent\underline{Estimation of $I_3$ in \eqref{E:ENERGYFIRST1}.}
By the Cauchy-Schwarz inequality and~\eqref{E:LAMBDABOUNDS}, the last integral $I_3$ above satisfies the bound
\begin{align}
& \Big|\int_0^\tau \int_\Omega \psi \left(  w^{1+\alpha+a} \mathcal C^{a,\beta,k}_i \right),_k \Lambda^{-1}_{im}\pa_\tau\der\uptheta^m \,dyd\tau'\Big| \notag \\
& \lesssim \int_0^\tau \|\der{\bf V}^m\|_{1+\alpha+a,\psi} \left(\|\mathcal C^{a,\beta,k}_i(\uptheta) \|_{\alpha+a,\psi}+\|D\mathcal C^{a,\beta,k}_i \|_{1+\alpha+a,\psi}\right)\,d\tau' \notag \\
& \lesssim \int_0^\tau e^{-\mu_0\tau'} \mathcal S^N(\tau')\,d\tau',
\end{align}
where we used the definition of $\mathcal S^N$ and~\eqref{E:LOWERORDERTERM}. 

\

\noindent\underline{Estimation of $I_2$ in \eqref{E:ENERGYFIRST1}.} We now explain the second-to-last integral $I_2$ on the right-hand side of~\eqref{E:ENERGYFIRST1}. Note that $\psi,_k $ is supported in $B_{\frac34}({\bf 0})\setminus B_{\frac14}({\bf 0})$ and it is not bounded by $\psi$, and therefore the integral cannot be controlled by using only $\psi$ norms. On $B_{\frac34}({\bf 0})\setminus B_{\frac14}({\bf 0})$, however, $\der\uptheta$ and $\der {\bf V}$ can be expressed in terms of cartesian derivatives $\pa^\nu$ of $\uptheta$ and $\bf{V}$ for $|\nu|\leq a+|\beta|$. Hence we use both norms involving $\psi$ and $1-\psi$ to control the integral:
\begin{align}
& \Big|\int_0^\tau \int_{B_{\frac34}({\bf 0})\setminus B_{\frac14}({\bf 0})}\psi,_k  w^{1+\alpha+a}   \J^{-\frac1\alpha} \left(\A^k_\ell  [\text{Curl}_{\Lambda\A}\der\uptheta]^\ell_i + \A^k_\ell \Lambda_{\ell j}[\nabla_\eta \der\uptheta]^i_j+ \tfrac1\alpha
\Lambda_{ij}\A^k_j \text{div}_\eta (\der\uptheta)\right) \notag \\
& \ \ \ \  \qquad \Lambda^{-1}_{im}\pa_\tau\der\uptheta^m\, dy \,d\tau' \Big| \notag \\
& \lesssim \int_0^\tau\|D\psi\|_{L^\infty(\Omega)} \Big(   \|\der{\bf V}^m\|_{\alpha+a,\psi} \|\nabla_\eta \der\uptheta\|_{1+\alpha+a,\psi} + \sum_{|\nu|\le a+|\beta|}\|\pa^\nu{\bf V}\|_{\alpha,1-\psi} \sum_{|\nu|\le a+|\beta|}\|\nabla_\eta\pa^\nu\uptheta\|_{1+\alpha,1-\psi}\Big)\,d\tau' \notag \\
& \lesssim \int_0^\tau e^{-\mu_0\tau'} \mathcal S^N(\tau')\,d\tau'. 
\end{align}
The last two integrals $I_2$ and $I_3$ on the right-hand side of~\eqref{E:ENERGYFIRST1} therefore contribute to the error term $\mathcal R$ satisfying the bound~\eqref{E:ERRORR}.

\

\noindent\underline{Estimation of $I_1$ in \eqref{E:ENERGYFIRST1}.} We evaluate now the first integral  on the right-hand side of~\eqref{E:ENERGYFIRST1}: 
\begin{align}
&\int_\Omega\psi  w^{1+\alpha+a}   \J^{-\frac1\alpha} \left(\A^k_\ell  [\text{Curl}_{\Lambda\A}\der\uptheta]^\ell_i + \A^k_\ell \Lambda_{\ell j}[\nabla_\eta \der\uptheta]^i_j+ \tfrac1\alpha
\Lambda_{ij}\A^k_j \text{div}_\eta (\der\uptheta)\right) \notag \\
& \ \ \ \  \qquad \Lambda^{-1}_{im}\pa_\tau\der\uptheta^m,_k\, dy \notag \\
&= \int_\Omega \psi  w^{1+\alpha+a}  \J^{-\frac1\alpha}[\text{Curl}_{\Lambda\A}\der\uptheta]^\ell_i \A^k_\ell\Lambda^{-1}_{im}\pa_\tau\left(\der\uptheta^m\right),_k\,dy\notag \\
& \ \ \ \  +\int_\Omega \psi  w^{1+\alpha+a}  \J^{-\frac1\alpha} \left(\A^k_\ell \Lambda_{\ell j}[\nabla_\eta \der\uptheta]^i_j+ \tfrac1\alpha
\Lambda_{ij}\A^k_j \text{div}_\eta (\der\uptheta)\right)\Lambda^{-1}_{im}\pa_\tau\left(\der\uptheta^m\right),_k\, dy \label{E:ENERGYTWO}
\end{align}
We first focus on the first integral on the right-hand side of~\eqref{E:ENERGYTWO}. We integrate-by-parts with respect to $\tau$ and obtain
\begin{align}
& \int_0^\tau \int_\Omega w^{1+\alpha+a}  \J^{\frac1\alpha} \A^k_\ell\Lambda^{-1}_{im}
\psi[\text{Curl}_{\Lambda\A}\der\uptheta]^\ell_i \pa_\tau\left(\der\uptheta^m\right),_k \,dx\,d\tau' \notag \\
& =  \int_\Omega w^{1+\alpha+a}  \J^{\frac1\alpha} \A^k_\ell\Lambda^{-1}_{im}
\psi[\text{Curl}_{\Lambda\A}\der\uptheta]^\ell_i \left(\der{\bf\uptheta}^m\right),_k \,dx\Big|^\tau_0 \notag \\
& \ \ \ \ -\int_0^\tau \int_\Omega w^{1+\alpha+a}  \J^{\frac1\alpha} \A^k_\ell\Lambda^{-1}_{im}
\psi[\text{Curl}_{\Lambda\A}\der{\bf V}]^\ell_i \left(\der{\bf\uptheta}^m\right),_k \,dx\,d\tau' \notag\\
& \ \ \ \ -\int_0^\tau \int_\Omega w^{1+\alpha+a}  \pa_\tau\left(\J^{\frac1\alpha} \A^k_\ell\Lambda^{-1}_{im}\right)
\psi[\text{Curl}_{\Lambda\A}\der{\bf \uptheta}]^\ell_i \left(\der{\bf\uptheta}^m\right),_k \,dx\,d\tau'  \notag \\
& \ \ \ \ -\int_0^\tau \int_\Omega w^{1+\alpha+a}  \J^{\frac1\alpha} \A^k_\ell\Lambda^{-1}_{im}
\psi[\text{Curl}_{\Lambda_\tau\A}\der{\bf \uptheta}]^\ell_i \left(\der{\bf\uptheta}^m\right),_k \,dx\,d\tau'.  \label{E:INTINTAU}
\end{align} 
The first term on the right-hand side of~\eqref{E:INTINTAU} is estimated by using the Young inequality and Proposition~\ref{P:VORTICITYBOUNDS}:
\begin{align}
&\int_\Omega w^{1+\alpha+a}  \J^{\frac1\alpha} \A^k_\ell\Lambda^{-1}_{im}
\psi\sum_{a+|\beta|\le N}[\text{Curl}_{\Lambda\A}\der\uptheta]^\ell_i \left(\der{\bf\uptheta}^m\right),_k \,dx\Big|^\tau_0 \notag \\
&\lesssim \norm(0) + \kappa\|\nabla_\eta\der\uptheta\|_{1+\alpha+a,\psi}^2 + \frac1\kappa \|\text{Curl}_{\Lambda\A}\der\uptheta\|_{1+\alpha+a,\psi}^2 \notag\\
&\lesssim \norm(0) +\kappa\norm(\tau) + \vortnorm[\uptheta](\tau) \label{E:CURLBOUNDFIRST}
\end{align}
where we used the Young inequality in the first estimate.  
To bound the second term on the right-hand side of~\eqref{E:INTINTAU}
we use the Cauchy-Schwarz inequality and the a priori bounds~\eqref{E:APRIORI},  
\begin{align}
&\Big|\int_0^\tau \int_\Omega w^{1+\alpha+a}  \J^{\frac1\alpha} \A^k_\ell\Lambda^{-1}_{im}
\psi[\text{Curl}_{\Lambda\A}\der{\bf V}]^\ell_i \left(\der{\bf\uptheta}^m\right),_k \,dx\,d\tau' \Big| \notag \\
&\lesssim \int_0^\tau\|\nabla_\eta\der\uptheta\|_{1+\alpha+a,\psi} \|\text{Curl}_{\Lambda\A}\der{\bf V}\|_{1+\alpha+a,\psi} \,d\tau' \notag \\
&\lesssim \int_0^\tau (\norm(\tau'))^{\frac12} (\vortnorm[{\bf V}](\tau'))^{\frac12}\,d\tau'.\label{E:CURLBOUNDSECOND} 
\end{align}   
The last two integrals on the right-hand side of~\eqref{E:INTINTAU} are easily estimated by using the Young inequality, definition of $\mathcal S^N$, and Lemma~\ref{L:GAMMASTARASYMP} 
and they  therefore contribute to the error term $\mathcal R$ satisfying the bound~\eqref{E:ERRORR}.

We next move onto the second integral on the right-hand side of~\eqref{E:ENERGYTWO}, which will give rise to our energy.  
Note that 
\begin{align}
&\int_\Omega \psi  w^{1+\alpha+a}  \J^{-\frac1\alpha} \left(\A^k_\ell \Lambda_{\ell j}[\nabla_\eta \der\uptheta]^i_j+ \tfrac1\alpha
\Lambda_{ij}\A^k_j \text{div}_\eta (\der\uptheta)\right)\Lambda^{-1}_{im}\pa_\tau\left(\der\uptheta^m\right),_k\, dy \notag \\
&=\int_\Omega \psi  w^{1+\alpha+a}  \J^{-\frac1\alpha} \left( \Lambda_{\ell j}[\nabla_\eta \der\uptheta]^i_j \Lambda^{-1}_{im}[\nabla_\eta\pa_\tau \der\uptheta ]_\ell^m+\tfrac1\alpha \text{div}_\eta (\der\uptheta) \text{div}_\eta (\pa_\tau \der\uptheta) \right)dy\notag 
\\
&=\int_\Omega \psi  w^{1+\alpha+a}  \J^{-\frac1\alpha} \left( \Lambda_{\ell j}[\nabla_\eta \der\uptheta]^i_j \Lambda^{-1}_{im}\pa_\tau [\nabla_\eta \der\uptheta ]_\ell^m+\tfrac1\alpha\text{div}_\eta (\der\uptheta) \pa_\tau \text{div}_\eta ( \der\uptheta) \right)dy\notag 
\\
&- \int_\Omega \psi  w^{1+\alpha+a}  \J^{-\frac1\alpha} \left( \Lambda_{\ell j}[\nabla_\eta \der\uptheta]^i_j \Lambda^{-1}_{im}\pa_\tau\A^k_\ell \left(\der\uptheta^m\right),_k+ \tfrac1\alpha\text{div}_\eta (\der\uptheta)  \pa_\tau \A^k_j \left(\der\uptheta^j\right),_k  \right) dy
\label{E:ALMOSTE}
\end{align}  
The last integral consists of lower order terms and they will contribute to the error term $\mathcal R$ satisfying~\eqref{E:ERRORR}. The source of the exponential-in-$\tau$ decay
are the the terms of the form $\pa_\tau\A$ which at the top order behave like $D{\bf V}$.   
By Lemma~\ref{L:KEYLEMMA}, the first integral in the right hand side of~\eqref{E:ALMOSTE} can be rewritten in the form
\begin{align}
&\int_\Omega \psi  w^{1+\alpha+a}  \J^{-\frac1\alpha} \left( \Lambda_{rj}[\nabla_\eta \der\uptheta]^i_j \Lambda^{-1}_{im}\pa_\tau [\nabla_\eta \der\uptheta ]_\ell^m\, + \tfrac1\alpha\text{div}_\eta (\der\uptheta) \pa_\tau \text{div}_\eta ( \der\uptheta) \right)dy \notag \\
& = \frac12\frac{d}{d\tau}\left\{\int_\Omega \psi  w^{1+\alpha+a}  \J^{-\frac1\alpha}\left(\sum_{i,j=1}^3 d_id_j^{-1}(\mathscr M_{a,\beta})^j_{i})^2 + \frac1\alpha\left(\text{div}_\eta\der\uptheta\right)^2\right)\,dy\right\} \notag\\
&\ \ \ \ + \int_\Omega \psi  w^{1+\alpha+a}  \J^{-\frac1\alpha} \mathcal T_{a,\beta} dy  \notag \\
& \ \ \ \ -  \frac12\int_\Omega \psi  w^{1+\alpha+a} \pa_\tau\left(\J^{-\frac1\alpha}\right)\left(\sum_{i,j=1}^3 d_id_j^{-1}(\mathscr M_{a,\beta})^j_{i})^2 + \frac1\alpha\left(\text{div}_\eta\der\uptheta\right)^2\right)\,dy, \label{E:ALMOSTE2}
\end{align}
where we recall that $\mathscr M_{a,\beta}$ is defined in~\eqref{E:MNDEF}.
Using the a priori bounds~\eqref{E:APRIORI}, definition~\eqref{E:TABETA}, and~\eqref{E:LAMBDABOUNDS2} it follows that 
\[
\Big| \int_\Omega \psi  w^{1+\alpha+a}  \J^{-\frac1\alpha} \mathcal T_{a,\beta} dy \Big| \lesssim e^{-\mu_0\tau}\norm(\tau).
\]
Similar argument applies to the last term on the right-hand side of~\eqref{E:ALMOSTE2}, where the source of an exponentially decaying term is $\pa_\tau\J$ and we rely on Lemma~\ref{L:GAMMASTARASYMP}.
Therefore, the last two terms on the right-hand side of~\eqref{E:ALMOSTE2} will contribute to error term $\mathcal R$ and are bounded by $e^{-\mu_0\tau} \mathcal S^N(\tau)$. 

From~\eqref{E:ENERGYFIRST},~\eqref{E:ENERGYFIRST1},~\eqref{E:ENERGYTWO},~\eqref{E:ALMOSTE},~\eqref{E:CURLBOUNDFIRST},~\eqref{E:CURLBOUNDSECOND}, 
and~\eqref{E:ALMOSTE2} we infer that 
\begin{align}
&\frac12\left(\mu^{3\gamma-3} \int_\Omega \psi \, w^{\alpha+a} \langle\Lambda^{-1}\der{\bf V},\,\der{\bf V}\rangle \,dy+\delta \int_\Omega\psi  w^{\alpha+a}\, |\der\uptheta |^2 
\,dy \right)  \notag \\
& + \frac12\left(\int_\Omega \psi  w^{1+\alpha+a}  \J^{-\frac1\alpha}\left(\sum_{i,j=1}^3 d_id_j^{-1}(\mathscr M_{a,\beta})^j_{i})^2 +\frac1\alpha \left(\text{div}_\eta\der\uptheta\right)^2\right)\,dy\right) \notag \\
&+\int_0^\tau \frac{5-3\gamma}{2}\mu^{3\gamma-3} \frac{\mu_\tau}{\mu}\left(\int_\Omega \psi  w^{\alpha+a}\,\langle \Lambda^{-1}\der{\bf V},\,\der{\bf V}\rangle dy\right)d\tau' \notag \\
&\lesssim \norm(0)  +\vortnorm[\uptheta](\tau) + \int_0^\tau \norm(\tau')^{\frac12} \vortnorm[{\bf V}](\tau')^{\frac12}\,d\tau'   + \kappa\norm(\tau) + \int_0^\tau e^{-\mu_0\tau'} \norm(\tau') d\tau'.
\label{E:ENERGY1}
\end{align}

For any index $\nu = (\nu_1,\nu_2,\nu_3)\in\mathbb Z_{\ge0}^3$ we now commute~\eqref{E:THETAEQUATION} with the operator $(1-\psi)\car$, where we recall that the cut-off
function $\psi$ is defined in~\eqref{E:PSIDEF}. Here the estimates will be obtained away from the boundary and therefore, the weight $ w$ has a positive lower bound in $B_{\frac34}({\bf 0})$. Hence the change of the weights is not critical to closing the estimates. 
A similar computation yields the following energy inequality:
\begin{align}
&\frac12\left(\mu^{3\gamma-3} \int_\Omega (1-\psi) \, w^{\alpha} \langle\Lambda^{-1}\car{\bf V},\,\car{\bf V}\rangle \,dy+\delta \int_\Omega(1-\psi ) w^{\alpha}\, |\car\uptheta |^2 
\,dy \right)  \notag \\
& + \frac12\left(\int_\Omega (1- \psi)  w^{1+\alpha}  \J^{-\frac1\alpha}\left(\sum_{i,j=1}^3 d_id_j^{-1}(\mathscr N_\nu)^j_{i})^2 +\frac1\alpha \left(\text{div}_\eta\car\uptheta\right)^2\right)\,dy\right) \notag \\
&+\int_0^\tau \frac{5-3\gamma}{2}\mu^{3\gamma-3} \frac{\mu_\tau}{\mu}\left(\int_\Omega (1-\psi)  w^{\alpha}\,\langle \Lambda^{-1}\car{\bf V},\,\car{\bf V}\rangle dy \right) d\tau'\notag \\
&\lesssim \norm(0)  +\vortnorm[\uptheta](\tau) + \int_0^\tau \norm(\tau')^{\frac12} \vortnorm[{\bf V}](\tau')^{\frac12}\,d\tau'   + \kappa\norm(\tau) + \int_0^\tau e^{-\mu_0\tau'} \norm(\tau') d\tau'.
\label{E:ENERGY2}
\end{align}
Summing~\eqref{E:ENERGY1} and~\eqref{E:ENERGY2} we arrive at~\eqref{E:ENERGYMAIN0}. 
\end{proof}

%%%%%%%%%%%%%%%%%%%%%%%%%%%%%%%
%%%%%%%%%%%%%%%%%%%%%%%%%%%%%%%

\section{Nonlinear energy inequality and proof of the main theorem}\label{S:MAINPROOF}

By the local-in-time well-posedness theory from~\cite{JaMa2015} there exists a unique solution to the initial value problem~\eqref{E:THETAEQUATION}--\eqref{E:THETAINITIAL} on some
time interval $[0,T], T>0.$ See Theorem \ref{T:LWP}. 
From Propositions~\ref{P:VORTICITYBOUNDS} and~\ref{P:ENERGYESTIMATE1} with $\kappa>0$ chosen small enough and by using the equivalence of the norm and energy in Proposition \ref{P:NORMENERGY}, we conclude that there exist universal constants $c_1,c_2,c_3\geq 1$ such that 
for any $0\le\sigma\le\tau\le T$
\begin{align}\label{E:ENERGYBOUND}
\sup_{\sigma\le \tau'\le \tau} \energy(\tau') 
 \le c_1\energy(\sigma) + c_2 \left(\norm(0)+\vortnorm[{\bf V}](0)\right) + c_3\int_\sigma^\tau e^{-\frac{\mu_0}{2}\tau'} \norm (\tau';\sigma) d\tau'.
\end{align}
Here $\norm(\tau;\sigma)$ denotes the sliced norm of $\norm$ from $\sigma$ to $\tau$ with $\sup_{0\le \tau' \le\tau}$ replaced by $\sup_{\sigma\le \tau' \le\tau}$  in the definition of $\norm$ \eqref{E:SNORM}. Note that $\norm(\tau;0)=\norm(\tau)$ and $\norm(\tau)=\max\{ \norm(\sigma),  \norm(\tau;\sigma)\}$ for any $\sigma\in[0,\tau]$. 
When $\sigma=0$, the estimate \eqref{E:ENERGYBOUND} is a direct consequence of Propositions~\ref{P:VORTICITYBOUNDS} and~\ref{P:ENERGYESTIMATE1}, while for general $\sigma>0$ the estimate follows by an identical argument, applying our analysis to the interval $[\sigma,\tau]$.

By a standard well-posedness estimate, we deduce that the time of existence $T$ is inversely proportional to the size of the initial data, i.e.: $T\sim (\norm(0)+\vortnorm[{\bf V}](0))$.
Choose $\varepsilon>0$ so small that the time of existence $T$ satisfies 
\be\label{E:LOCALTIME}
e^{-\mu_0T/4}\le\frac{\kappa \mu_0}{c_3}, \ \ \sup_{\tau\le T}\norm(\tau)\le C\left(\norm(0)+\vortnorm[{\bf V}](0)\right) 
\ee
where 
$C>0$ is a universal constant provided by the local-in-time well-posedness theory. Let 
\[
C_\ast =\frac{3(c_1C_2C+c_2)}{C_1}
\]
where $C_1, C_2$ ($C_1\leq C_2$) are the constants appearing in the equivalence of the energy and the norm given in Proposition \ref{P:NORMENERGY}. 
Define 
\[
\mathcal T : = \sup_{\tau\ge0}\{\text{ solution to~\eqref{E:THETAEQUATION}--\eqref{E:THETAINITIAL} exists on $[0,\tau)$ and} \ \norm(\tau)\le  C_\ast \left(\norm(0)+\vortnorm[{\bf V}](0)\right)\} . 
\]
Observe that $\mathcal T\geq T$. 
Letting $\sigma=\frac T2$ in~\eqref{E:ENERGYBOUND} for any $\tau\in[\frac T2,\mathcal T]$ we obtain 
\[
\sup_{\tau'\in[\frac T2,\tau]} \energy(\tau') 
\le c_1\energy(\frac T2)+c_2\left(\norm(0)+\vortnorm[{\bf V}](0)\right) 
+  c_3\int_{\frac T2}^\tau e^{-\frac{\mu_0}{2}\tau'}\norm(\tau';\frac{T}{2})\,d\tau'.
\]
Therefore, using~\eqref{E:LOCALTIME} we conclude that for any $\tau\in[\frac T2,\mathcal T]$
\begin{align}
\sup_{\tau'\in[\frac T2,\tau]} \energy(\tau') 
& \le  c_1\energy(\frac T2)+c_2\left(\norm(0)+\vortnorm[{\bf V}](0)\right)   + \frac{c_3}{\mu_0}e^{-\mu_0 T/4}\norm(\tau;\frac{T}{2}) \notag \\
& \le c_1C_2\norm(\frac T2)+c_2\left(\norm(0)+\vortnorm[{\bf V}](0)\right)  +\kappa\norm(\tau;\frac{T}{2}). \label{E:CONT1}
\end{align}
Since by~\eqref{E:LOCALTIME}, $\norm(\frac T2)\le C(\norm(0)+\vortnorm[{\bf V}](0))$ we conclude from~\eqref{E:CONT1} that
\begin{align*}
C_1\norm(\tau;\frac{T}{2}) \le (c_1C_2C+ c_2)\left(\norm(0)+\vortnorm[{\bf V}](0)\right) +\kappa\norm(\tau;\frac{T}{2}),
\end{align*}
which for sufficiently small $\kappa$ gives
\begin{align*}
\norm(\tau;\frac{T}{2}) & \le \frac{2(c_1C_2C+c_2)}{C_1}(\norm(0)+\vortnorm[{\bf V}](0))  
< C_\ast \left(\norm(0)+\vortnorm[{\bf V}](0)\right)
\end{align*}
and hence $\norm(\tau)< C_\ast \left(\norm(0)+\vortnorm[{\bf V}](0)\right)$. It is now easy to check the a priori bounds in \eqref{E:assumption} and \eqref{E:APRIORI} are in fact improved. 
For instance, by the fundamental theorem of calculus 
\[
\|D\uptheta\|_{W^{1,\infty}} = \|\int_0^\tau D{\bf V}\|_{W^{1,\infty}} \le \int_0^\tau e^{-\mu_0\tau'} \norm(\tau')\,d\tau' \lesssim \varepsilon <\frac16, \ \ \tau\in[0,\mathcal T) 
\]
for $\varepsilon>0$ small enough. Similar arguments apply to the remaining a priori assumptions.

From the continuity of the map $\tau\mapsto \norm(\tau')$ and the definition of $\mathcal T$ we conclude that 
$\mathcal T = \infty$ and the solution to~\eqref{E:THETAEQUATION}--\eqref{E:THETAINITIAL} exists globally-in-time. 
Together with Proposition~\ref{P:VORTICITYBOUNDS} the global bound~\eqref{E:GLOBALBOUND} follows.

Note that  by Lemma~\ref{L:GAMMASTARASYMP} $\mu(\tau)\sim_{\tau\to\infty} e^{\mu_1\tau}$ where
$\mu_1>0$ is defined in \eqref{E:MUZERODEF}, see also Lemma~\ref{L:GAMMASTARASYMP}. 
Statement~\eqref{E:EXPONENTIALDECAY} follows from the already established global bound $\norm(\tau)\le C\varepsilon$ and the presence of the exponentially growing term 
$\mu^{3\gamma-3}$ in the definition~\eqref{E:SNORM} of $\norm$. Estimate~\eqref{E:VORTICITYDECAY} is a simple consequence of~\eqref{E:GLOBALBOUND} and Proposition~\ref{P:VORTICITYBOUNDS}.

From the global-in-time boundedness of $\norm$, there exists a weak limit $\uptheta_\infty$ independent of $\tau$ such that 
$\norm(\uptheta_\infty,0) \lesssim \norm (\tau) \le C\varepsilon$. 
Observe that for any $0<\tau_2<\tau_1$, 
\begin{align*}
&\|  \der\uptheta (\tau_1) - \der\uptheta(\tau_2) \|^2_{a+\alpha,\psi}  = \int_\Omega\psi  w^{\alpha+a} \left| \int_{\tau_2}^{\tau_1} \der\uptheta_\tau d\tau \right|^2\,dy\\
&\le \left( \int_{\tau_2}^{\tau_1} \mu^{\frac{3-3\gamma}2} d\tau \right) \left( \int_{\tau_2}^{\tau_1}  \mu^{\frac{3\gamma-3}2}\int_\Omega\psi  w^{\alpha+a} \left| \der{\bf V}\right|^2 dy \,d\tau   \right)\\
&  \lesssim ( e^{-\frac{3\gamma-3}2\mu_0\tau_2} -e^{-\frac{3\gamma-3}2\mu_0\tau_1}  ) 
\int_{\tau_2}^{\tau_1}  \mu^{-\frac{3\gamma-3}{2}}\mu^{3\gamma-3}\int_\Omega\psi  w^{\alpha+a} \left| \der{\bf V}\right|^2 dy \,d\tau  \\
& \lesssim \varepsilon( e^{- \frac{3\gamma-3}2\mu_0\tau_2} -e^{- \frac{3\gamma-3}2\mu_0\tau_1}  ),
\end{align*}
where the last line follows from the integrability of $\int_{\tau_2}^{\tau_1}  \mu^{-\frac{3\gamma-3}{2}}\,d\tau$ and the fact that $\sup_{0\le \tau'\le\tau} \left(\mu^{3\gamma-3}\|\der{\bf V}(\tau')\|_{L^2_{\alpha+a,\psi}}^2\right) \lesssim\norm(\tau)\lesssim\varepsilon$ for any $\tau\ge0$.
Therefore, given a strictly increasing sequence $\tau_n \rightarrow \infty$, the sequence $\{\der\uptheta (\tau_n)\}_{n=1}^\infty$  is Cauchy in $L^2_{a+\alpha,\psi}$ for any $(a,\beta)$ satisfying $a+|\beta|\le N$.
By an analogous argument we can show that the sequence $\{\pa^\nu\uptheta (\tau_n)\}_{n=1}^\infty$  is Cauchy in $L^2_{\alpha,1-\psi}$. This completes the proof of~\eqref{E:ASYMP}.

\section*{Acknowledgements}

The authors express their gratitude to P. Rapha\"el for fruitful discussions and for pointing out connections to the treatment of self-similar singular behavior for nonlinear Schr\"odinger equations. They also thank C. Dafermos for his feedback and pointing out important references. 
JJ is supported in part by NSF grants DMS-1608492 and DMS-1608494 and a von Neumann fellowship of the Institute for Advanced Study through the NSF grant DMS-1128155. 
 MH acknowledges the support of the EPSRC Grant EP/N016777/1.

\appendix
\renewcommand{\theequation}{\Alph{section}.\arabic{equation}}
\setcounter{theorem}{0}\renewcommand{\theorem}{\Alph{section}.\??arabic{prop}}

\section{Asymptotic-in-$\tau$ behavior of affine solutions}\label{A:ASYMPTOTIC}

In this section we collect some of the basic properties of affine motions that are used at many places in our estimates. Their proofs are rather straightforward and follow directly
from the description of the asymptotic behavior of the solutions of~\eqref{E:SIDERISODE} from~\cite{Sideris}.

We remind the reader that for any $M\in\mathbb M^{3\times3}$ we denote by $\|M\|$ the Hilbert-Schmidt norm of the matrix $M$.

\begin{lemma}[Asymptotic behavior of $A$, $\Gamma^*= O^{-1} O_\tau$, and $\Lambda =  O^{-1} O^{-\top}$] \label{L:GAMMASTARASYMP}
For any $\gamma\in(1,\frac53]$ and any pair of initial conditions $(A(0),\dot A(0))\in  \text{{\em GL}}^+(3)\times \mathbb M^{3\times3}$ there exist matrices $A_0,A_1,M(t)$ such that 
the unique solution $A(t)$ to the Cauchy problem
\begin{align}
A_{tt} &= \delta \det{A}^{1-\gamma} A^{-\top} \\
A(0) & = A(0),  \ \ A_t(0) = \dot A(0)
\end{align}
can be written in the form
\begin{align}\label{E:AASYMP}
A(t) = A_0 + t A_1 + M(t), \ \ t\ge0,
\end{align}
where $A_0,A_1$ are both time-independent and $M(t)$ satisfies the bounds
\begin{align}\label{E:MASYMP}
\|M(t)\| = o_{t\to\infty}(1+t), \ \ \|\pa_t M(t)\| \lesssim (1+t)^{3-3\gamma}.
\end{align}
Moreover
\begin{align}
e^{\mu_1\tau} \lesssim \mu(\tau)  \lesssim  e^{\mu_1\tau}, \ \ \tau\ge0, \label{E:MUASYMP0}\\
\lim_{\tau\to\infty} \|\Gamma^* - e^{-\mu_1\tau} \mu_1 A_0 A_1^{-1} \| = 0,\label{E:GAMMASTARASYMPTOTICS}
\end{align}
where 
\[
\Gamma^* =  O^{-1} O_\tau, \ \ \ \mu_1 = (\det A_1)^{\frac13}>0.
\]
Furthermore there exists a constant $C>0$ such that 
\begin{align}\label{E:LAMBDABOUNDS}
\|\Lambda_\tau\|\le C e^{-\mu_1\tau}, \ \  \|\Lambda_{\tau\tau}\| \le C e^{-2\mu_0\tau}
, \ \ \|\Lambda\| + \|\Lambda^{-1}\| \le C,  \\
\sum_{i=1}^3\left(d_i+\frac 1{d_i}\right) \le C \label{E:EIGENVALUESBOUND}\\
\sum_{i=1}^3|\pa_\tau d_i| + \|\pa_\tau P\| \le C e^{-\mu_1\tau} \label{E:LAMBDABOUNDS2} \\
\frac1 C|{\bf w}|^2 \le \langle \Lambda^{-1}{\bf w}, {\bf w}\rangle  \le C |{\bf w}|^2 , \ {\bf w}\in \mathbb R^3. \label{E:LAMBDAPOSDEF}
\end{align}
where  
$d_i$, $i=1,2,3$, are the eigenvalues of the matrix $\Lambda$ and $P\in \text{SO}(3)$ satisfies
$\Lambda = P^\top Q P,$ $Q = \text{{\em diag}}(d_i)$.
\end{lemma}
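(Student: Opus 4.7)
The plan is to derive all the stated asymptotics from the detailed ODE analysis of~\eqref{E:SIDERISODE} given in~\cite{Sideris}, combined with a careful chain-rule translation between the physical time $t$ and the logarithmic time $\tau$ defined by $dt/d\tau=\mu$.

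First, for~\eqref{E:AASYMP}--\eqref{E:MASYMP}, I would invoke the main asymptotic result in~\cite{Sideris}: for $\gamma\in(1,\tfrac{5}{3}]$ the derivative $A_t(t)$ converges to some $A_1\in\mathrm{GL}^+(3)$ as $t\to\infty$. Setting $A_0:=A(0)$ and $M(t):=A(t)-A_0-tA_1$, the estimates on $M$ come from the ODE itself: plugging $A\sim tA_1$ into $A_{tt}=\delta(\det A)^{1-\gamma}A^{-\top}$ yields $\|A_{tt}\|\lesssim t^{3(1-\gamma)}\cdot t^{-1}=t^{2-3\gamma}$, which is integrable on $[T,\infty)$ for $\gamma>1$. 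Integrating from $t$ to $\infty$ gives $\|M_t(t)\|=\|A_t-A_1\|\lesssim t^{3-3\gamma}$, and integrating once more in $t$ gives $\|M(t)\|=o(1+t)$ ($\sim t^{4-3\gamma}$, $\log t$, or $O(1)$ according to $\gamma\lessgtr\tfrac{4}{3}$), proving~\eqref{E:MASYMP}.

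Second, for the exponential behavior~\eqref{E:MUASYMP0}: since $\det A(t)=t^3\det A_1+O(t^2)$, one has $\mu(t)=\mu_1 t+O(1)$ where $\mu_1:=(\det A_1)^{1/3}>0$. The relation $dt/d\tau=\mu\sim\mu_1 t$ then gives $(\log t)_\tau\to\mu_1$, so $t(\tau)\sim c_0 e^{\mu_1\tau}$ for some $c_0>0$, and the matching two-sided bound $\mu(\tau)\asymp e^{\mu_1\tau}$ follows, along with $\lim_{\tau\to\infty}\mu_\tau/\mu=\mu_1$, which simultaneously validates the definition of $\mu_1$ used in Theorem~\ref{T:MAINLAGR}.

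Third, for $\Gamma^*$ and $\Lambda$: from $A=\mu O$ and the chain rule $O_\tau=\mu O_t$ one finds the key algebraic identities
\[
\Gamma^*=O^{-1}O_\tau=\mu A^{-1}A_t-\mu_t\,\mathrm{Id},\qquad \Lambda=O^{-1}O^{-\top}.
\]
The convergence $A/\mu\to A_1/\mu_1$ yields $O(\tau)\to A_1/\mu_1$ and hence $\Lambda(\tau)\to\Lambda_\infty:=\mu_1^2 A_1^{-1}A_1^{-\top}$, a positive-definite matrix. This gives the uniform upper and lower bounds on $\Lambda$ and $\Lambda^{-1}$, hence~\eqref{E:LAMBDAPOSDEF}, the last bound in~\eqref{E:LAMBDABOUNDS}, and the eigenvalue bounds~\eqref{E:EIGENVALUESBOUND}. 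Plugging the asymptotic expansions of $A$, $A_t$, $\mu$, $\mu_t$ into the expression for $\Gamma^*$, the leading $\mu_1\,\mathrm{Id}$ contributions cancel, and one reads off $\|\Gamma^*\|=O(t^{-1})=O(e^{-\mu_1\tau})$ with an explicit leading coefficient matching~\eqref{E:GAMMASTARASYMPTOTICS}. The bound on $\Lambda_\tau$ then follows immediately from
\[
\Lambda_\tau=-\Gamma^*\Lambda-\Lambda(\Gamma^*)^\top,
\]
and the bound on $\Lambda_{\tau\tau}$ from a second differentiation combined with an analogous asymptotic expansion of $\Gamma^*_\tau$. Finally, for the smoothness estimates~\eqref{E:LAMBDABOUNDS2} on the eigenvalues $d_i(\tau)$ and the diagonalizer $P(\tau)$, one applies standard perturbation theory for symmetric matrices (formulated via the holomorphic functional calculus on spectral projections when $\Lambda_\infty$ has repeated eigenvalues) to the smooth path $\Lambda(\tau)$ whose derivatives already decay exponentially.

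The only delicate point is obtaining the sharpness of the bound $\|\Lambda_{\tau\tau}\|\lesssim e^{-2\mu_0\tau}$ in the regime $\gamma>\tfrac43$, where $2\mu_0>\mu_1$: a naive application of Leibniz to $\Lambda_\tau=-\Gamma^*\Lambda-\Lambda(\Gamma^*)^\top$ produces only the weaker $e^{-\mu_1\tau}$ rate, so one must track second-order cancellations in the asymptotic expansion of $\Gamma^*$ using the next-order terms from the ODE~\eqref{E:SIDERISODE}; this is the main technical obstacle.
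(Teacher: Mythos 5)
Your proposal follows essentially the same route as the paper's proof: cite Sideris' ODE asymptotics for the expansion of $A(t)$ and hence the two-sided exponential bound on $\mu(\tau)$; compute $\Gamma^*$ explicitly from the chain rule (your formula $\Gamma^*=\mu A^{-1}A_t-\mu_t\,\mathrm{Id}$ is equivalent to the paper's computation of $O_t$ and the identity $\mu_t/\mu=\tfrac13\mathrm{Tr}(A^{-1}A_t)$); use $\Lambda_\tau=-\Gamma^*\Lambda-\Lambda(\Gamma^*)^\top$ together with the boundedness of $\Lambda$ for the decay of $\Lambda_\tau$; and invoke spectral perturbation theory for~\eqref{E:EIGENVALUESBOUND}--\eqref{E:LAMBDAPOSDEF}. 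Those pieces are fine and match the paper.

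The one genuine gap — which you correctly flag yourself — is the second bound in~\eqref{E:LAMBDABOUNDS}. The paper closes it not by a direct second-order expansion of $\Gamma^*$, but by the reduction
\[
\Gamma^*_\tau=-(\Gamma^*)^2+O^{-1}O_{\tau\tau},
\]
which isolates all the difficulty in $O_{\tau\tau}$, and then by the chain-rule formula
\[
O_{\tau\tau}=\mu\Bigl(A_{tt}-\tfrac13\bigl(A_t\,\mathrm{Tr}(A^{-1}A_t)+A\,\partial_t\mathrm{Tr}(A^{-1}A_t)\bigr)\Bigr).
\]
Here the ODE $A_{tt}=\delta(\det A)^{1-\gamma}A^{-\top}$ is substituted directly to give $\|A_{tt}\|\lesssim t^{2-3\gamma}$, and a refinement of the asymptotics of $\mathrm{Tr}(A^{-1}A_t)$ (this is the cancellation you anticipated) shows the remaining bracket is also $O(t^{2-3\gamma})$; multiplying by $\mu\sim t$ yields $\|O_{\tau\tau}\|\lesssim t^{3-3\gamma}\sim e^{-2\mu_0\tau}$ and hence the claimed $\Lambda_{\tau\tau}$ rate (while $(\Gamma^*)^2\sim e^{-2\mu_1\tau}$ is harmless since $\mu_0\le\mu_1$). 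So your intuition — ``use the next-order terms from the ODE'' — points in the right direction, but the concrete missing step is this reduction to $O_{\tau\tau}$ and the verification that the two correction terms above cancel to the same order as $A_{tt}$; as written your proposal stops short of that computation.
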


\begin{proof}
Asymptotic behavior~\eqref{E:AASYMP}--\eqref{E:MASYMP} and bound~\eqref{E:MUASYMP0} are a  consequence of Theorem 3 and Lemma 6 from~\cite{Sideris}.

\

\noindent
{\em Proof of~\eqref{E:GAMMASTARASYMPTOTICS}.}
Since $ O = \frac{A}{\mu}$ we have
\begin{align}
 O_t  = \frac{A_t}{\mu} - \frac{A\mu_t}{\mu^2} 
 = \frac{A_t}{\mu} - \frac{A\text{Tr}(A^{-1}A_t)}{3\mu},  \label{E:DTBARO}
\end{align}
where we used the formula
\[
\mu_t =\pa_t\left((\det A)^{\frac13}\right)  = \frac13 \mu^{-2} \pa_t\det A =  \frac13 \mu^{-2} \mu^3 \text{Tr}(A^{-1}A_t) =  \frac13 \mu \text{Tr}(A^{-1}A_t).
\]
Therefore, from~\eqref{E:AASYMP} and~\eqref{E:MASYMP} 
it is easy to obtain the following asymptotics:
\begin{align}
\text{Tr}(A^{-1}A_t)&\sim \text{Tr}((A_0+tA_1 + M(t))^{-1}(A_1+\pa_t M(t))) = t^{-1}\text{Tr}((\frac{A_0}{t}+A_1)^{-1}(A_1+O(t^{3-3\gamma})) \notag \\
&\sim t^{-1}\text{Tr}(A_1^{-1}(A_1+O(t^{3-3\gamma})) = \frac{3}{t} + O(t^{2-3\gamma}).  \label{E:TA}
\end{align}
This implies that 
\[
 O_t\sim \frac{A_1}{\mu} - \frac{(A_0+tA_1)\frac3t}{3\mu} = \frac{A_0}{t\mu},
\]
where we made use of~\eqref{E:MASYMP} again.
Recalling that 
$
\frac{d\tau}{dt} = \frac1{\mu}
$
we obtain the following asymptotic behavior 
$
 O_\tau\sim_{\tau\to\infty} \frac{A_0}{t(\tau)}.
$
Using~\eqref{E:AASYMP} again it follows that 
\be\label{E:MUASYMP}
\det A\sim \det (A_0+t A_1) = t^3 \det (\frac{A_0}t + A_1)\sim t^3\det A_1, \ \ \mu \sim t (\det A_1)^{\frac13}.
\ee
Since 
\[
t = t(\tau) \sim_{\tau\to\infty} e^{(\det A_1)^{\frac13}\tau}.
\]
and $ O \sim \frac{A_0+tA_1}{\mu}$ we conclude that 
\[
 O^{-1}\sim \frac{\mu}{t} (\frac{A_0}{t}+A_1)^{-1}\sim \frac{\mu}{t} A_1^{-1} \sim  (\det A_1)^{\frac13}A_1^{-1},
\]
where we used~\eqref{E:MUASYMP}.
Therefore
\[
\Gamma^* =  O^{-1} O_\tau \sim (\det A_1)^{\frac13}A_1^{-1}\frac{A_0}{t(\tau)} = e^{-\mu_1\tau} \mu_1 A_0 A_1^{-1},
\]
where 
$
\mu_1 = (\det A_1)^{\frac13},
$
and this completes the proof of~\eqref{E:GAMMASTARASYMPTOTICS}. Proof of~\eqref{E:LAMBDABOUNDS} is similar.

\

\noindent
{\em Proof of~\eqref{E:LAMBDABOUNDS}--\eqref{E:LAMBDABOUNDS2}.}
From the definition of $\Lambda$ we have
\begin{align}
\Lambda_\tau = -  O^{-1} O_\tau  O^{-1}  O^{-\top} -  O^{-1} O^{-\top}  O^\top_\tau  O^{-\top} 
= -\Gamma^*\Lambda - \Lambda(\Gamma^*)^\top = -2 \Lambda(\Gamma^*)^\top, \label{E:DTAUBARLAMBDA}
\end{align}
where we used the symmetry of $\Lambda$ in the last equality.
Since $\|\Lambda\|\lesssim 1$ it follows by part (i) that $\|\Lambda_\tau\|\lesssim e^{-\mu_1\tau}$.
To bound $\Lambda_{\tau\tau}$ we note that by~\eqref{E:DTAUBARLAMBDA}
$
\Lambda_{\tau\tau} = - 2 \Lambda_\tau (\Gamma^*)^\top - 2\Lambda (\Gamma_\tau^*)^\top.
$
Since both $\|\Lambda_\tau\|$ and $\|\Gamma^*\|$ decay exponentially, it remains to prove the decay of $\|\Gamma_\tau^*\|$.
From $\Gamma^* = O^{-1}O_\tau$ it follows that $\Gamma_\tau^*=-(\Gamma^*)^2 + O^{-1}O_{\tau\tau}$, and therefore it remains to prove the decay of $\|O_{\tau\tau}\|$.
A simple calculation shows that
\[
O_{\tau\tau} = \mu\left( A_{tt} - \frac{A_t \text{Tr}(A^{-1}A_t)+A \pa_t\text{Tr}(A^{-1}A_t)}{3} \right).
\]
Using the asymptotic behavior~\eqref{E:TA},~\eqref{E:SIDERISODE}, we can refine the asymptotics~\eqref{E:TA} to show that 
$A_t \text{Tr}(A^{-1}A_t)+A \pa_t\text{Tr}(A^{-1}A_t) = O(t^{2-3\gamma})$ and therefore from the above equation it follows that
\[
\|O_{\tau\tau}\|\lesssim (1+t)^{3-3\gamma} \lesssim e^{-2\mu_0\tau}.
\]
This yields the second bound in~\eqref{E:LAMBDABOUNDS}.  
Bounds~\eqref{E:EIGENVALUESBOUND}~\eqref{E:LAMBDABOUNDS2} follow by similar arguments using~\eqref{E:AASYMP}, while~\eqref{E:LAMBDAPOSDEF}
is a direct consequence of~\eqref{E:EIGENVALUESBOUND}.
\end{proof}

\section{Commutators}\label{A:COMM}

In order to evaluate various commutator terms that arise from commuting differential operators $\der$ with the usual Cartesian derivatives or apply the Leibniz rule we shall 
rely on the fact that the high-order Sobolev norms expressed in polar coordinates  are equivalent to the usual high-order Sobolev norms on the support of function $\psi$.

\begin{lemma}\label{L:COMMUTATORS}
Let $\mathcal X: =\{\pa_i, \sn_j,\pa_r\}_{i,j=1,2,3}$ be a collection of the standard Cartesian, normal, and tangential vector-fields.
For any two vector fields $X_k,X_\ell\in\mathcal X$, $k,\ell = 1,\dots,7$ there commutator satisfies the following relationship
\[
[X_k,X_\ell] = \sum_{m=1}^3c^m_{k\ell}\sn_m + c_{k\ell}\pa_r, \ \ k,\ell =1,\dots 7,
\]
where the functions $c^m_{k\ell}$, $c_{k\ell}$, are $C^\infty$ on the exterior of  any ball around the origin $r=0$.
\end{lemma}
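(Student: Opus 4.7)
The plan is to exploit the algebraic identity
$$\partial_i = \slashed\nabla_i + \frac{y_i}{r}\partial_r,\qquad i=1,2,3,$$
obtained by inverting the definition \eqref{E:TANGENTIAL}. This identity shows that any vector field $V = \sum_i v^i \partial_i$ whose Cartesian coefficients $v^i$ are smooth on $\mathbb{R}^3 \setminus \{0\}$ admits the representation
$$V = \sum_{i=1}^3 v^i \slashed\nabla_i + \Big(\sum_{i=1}^3 \frac{v^i y_i}{r}\Big)\partial_r,$$
with coefficients that remain smooth on $\mathbb{R}^3 \setminus \{0\}$. Since every element of $\mathcal{X}$ is a vector field whose Cartesian coefficients are smooth away from the origin, the standard formula $[\sum a^i\partial_i,\sum b^j\partial_j] = \sum_j (a^i b^j,_i - b^i a^j,_i)\partial_j$ guarantees that $[X_k,X_\ell]$ is again such a vector field. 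Applying the decomposition above to $[X_k,X_\ell]$ then produces the claimed representation, with the coefficients $c^m_{k\ell}$ and $c_{k\ell}$ both smooth off $r=0$.

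To make the construction concrete, I would verify the nontrivial cases by direct calculation. Using $\partial_r = \frac{y_k}{r}\partial_k$ and the product rule,
$$[\partial_r,\partial_i] = -\Big(\partial_i \frac{y_k}{r}\Big)\partial_k = -\frac{\delta_{ik}r^2 - y_iy_k}{r^3}\partial_k = -\frac{1}{r}\slashed\nabla_i,$$
which is already in the stated form. The commutators $[\partial_r,\slashed\nabla_j]$ and $[\partial_i,\slashed\nabla_j]$ reduce to this computation together with the definition $\slashed\nabla_j = \partial_j - \frac{y_j}{r}\partial_r$ and Leibniz-type manipulations; the remaining case $[\slashed\nabla_i,\slashed\nabla_j]$ follows either by direct (if slightly tedious) calculation or, more efficiently, by invoking the abstract argument of the first paragraph. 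The Cartesian pairs satisfy $[\partial_i,\partial_j]=0$ identically.

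The step I expect to require the most bookkeeping is $[\slashed\nabla_i,\slashed\nabla_j]$, due to the rational combinations $y_a y_b/r^2$ appearing in the coefficients of $\slashed\nabla_i$. However, the general observation of the first paragraph circumvents any explicit identification of the resulting coefficients, so the only real ``obstacle'' is confirming that every coefficient produced is manifestly smooth on any annulus bounded away from $r=0$ (in particular on the annulus $\tfrac{1}{4}\le r\le 1$ where Lemma~\ref{Lem:comm} and the energy estimates are actually applied). This is immediate since each such coefficient is built from polynomials in $y$ together with negative powers of $r$, both of which are $C^\infty$ on $\mathbb{R}^3\setminus\{0\}$.
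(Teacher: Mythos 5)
Your proof is correct and takes a genuinely different route from the paper's. The paper proves the lemma by simply listing and verifying the explicit commutator identities for every pair drawn from $\mathcal X$: $[\pa_r,\pa_i]=[\pa_r,\sn_i]=-\tfrac1r\sn_i$, $[\pa_i,\pa_j]=0$, $[\sn_i,\sn_j]=\tfrac{y_i\sn_j-y_j\sn_i}{r^2}$, and $[\pa_i,\sn_j]=-\tfrac{y_j\sn_i}{r^2}+\triangle_{ij}\pa_r$ with $\triangle_{ij}=\tfrac{y_iy_j-\delta_{ij}r^2}{r^3}$. Your approach instead argues abstractly: each element of $\mathcal X$ has Cartesian coefficients that are $C^\infty$ away from $r=0$; the Lie bracket of two such vector fields again has $C^\infty$ Cartesian coefficients away from the origin; and the inversion $\pa_i=\sn_i+\tfrac{y_i}{r}\pa_r$ then recasts any such vector field in the $\{\sn_m,\pa_r\}$ ``basis'' with coefficients of the required smoothness. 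This cleanly yields the lemma without case-by-case computation and is arguably more conceptual. One thing the paper's route buys that yours does not is the explicit formula~\eqref{E:PAISNJ}, which is actually invoked later (in the proof of Lemma~\ref{Lem:comm}); since the lemma as stated only asserts existence and smoothness of the coefficients, your proof suffices for it, but you would still need the explicit $[\pa_m,\sn_j]$ identity separately for that later application. A small cosmetic caveat worth noting: the $\sn_m$ are not linearly independent (indeed $y_m\sn_m\equiv 0$), so the representation you produce is not unique, but the lemma only asks for some such representation, which your decomposition provides.
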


\begin{proof}
The proof is a simple consequence of the following direct calculations. For any $i,j\in\{1,2,3\}$ we have
\begin{align}
[\pa_r,\pa_i] &= [\pa_r,\sn_i] = - \frac1r\sn_i, \ \ [\pa_i,\pa_j]  = 0, \notag \\
[\sn_i,\sn_j] & = \frac{y_i\sn_j-y_j\sn_i}{r^2}, \notag \\
[\pa_i,\sn_j] & = -\frac{y_j\sn_i}{r^2} + \triangle_{ij}\pa_r, \ \ \triangle_{ij}:= \frac{ y_i y_j-\delta_{ij}r^2 }{r^3}. \label{E:PAISNJ} 
\end{align}
\end{proof}

Lemma~\ref{L:COMMUTATORS} is a technical tool allowing us to bound the lower order commutators in our energy estimates.

Using the product rule and the relationship $\A=[D\eta]^{-1}$ it is easy to see that the following formulas hold
\begin{align}
\pa_r\A^k_i  & = -\A^k_s\pa_r\pa_m\eta^s\A^m_i =  -\A^k_s\pa_r\left(\pa_m\uptheta^s+\delta^s_m\right)\A^m_i \notag \\
& = -\A^k_s(\pa_r\uptheta^s),_m\A^m_i  + \A^k_s [\pa_m,\pa_r]\uptheta^s\A^m_i. \label{E:COMMFORMULA1}
\end{align} 
Similarly, for any $j=1,2,3$ we have
\begin{align}\label{E:COMMFORMULA2}
\sn_j\A^k_i & =  -\A^k_s(\sn_j\uptheta^s),_m\A^m_i  +\A^k_s[\pa_m,\sn_j]\uptheta^s\A^m_i. 
\end{align}
Therefore, if $\beta=(0,0,0)$ we have the formula
\begin{align}
\der \A^k_i  = \pa_r^a\A^k_i &= - \A^k_s(\pa_r^a \uptheta^s),_m\A^m_i - \A^k_r[\pa_r^{a-1},\pa_m]\pa_r\uptheta^s\A^m_i \notag \\
&-\sum_{1\le a'\le a } c_{a'}\pa_r^{a'}(\A^k_s\A^m_i)\pa_r^{a-a'}\left(\pa_r\uptheta^s\right),_m 
 +\pa_r^{a-1}\left(\A^k_s[\pa_m,\pa_r]\uptheta_{,j}^s\A^m_i \right). \label{E:TOPORDER1}
\end{align}
If $|\beta| > 0$, then with $e_1=(1,0,0)$, $e_2 = (0,1,0)$, and $e_3=(0,0,1)$ we obtain
\begin{align}
\der\A^k_i & =- \A^k_s(\der \uptheta^s),_m\A^m_i - \A^k_r[\pa_r^a\sn^{\beta-e_j},\pa_m]\sn_j\uptheta^s\A^m_i \notag \\
&  \ \ \ \ -\sum_{0<a'+|\beta'|\le a+|\beta| \atop \beta'\le\beta-e_j }c_{a'\beta'}\pa_r^{a'}\slashed\nabla^{\beta'}(\A^k_s\A^m_i)\pa_r^{a-a'}\slashed\nabla^{\beta-e_j-\beta'}\left(\sn_j\uptheta^s\right),_m \notag \\
&  \ \ \ \ -\pa_r^a\slashed\nabla^{\beta-e_j}\left( \A^k_s[\pa_m,\sn_j]\uptheta^s\A^m_i \right), \label{E:TOPORDER2}
\end{align}
where $c_{a'\beta'},c_{\beta'}$ are positive universal constants and the Einstein summation convention does not apply to the index $j$. 
Finally, the high-order commutators appearing on the right-hand side in the identities~\eqref{E:TOPORDER1}--\eqref{E:TOPORDER2} can be expressed as a linear combination of the elements of $\mathcal X$ with smooth coefficients away from zero. In other words, for any $j=1,2,3$ the following commutator identity holds:
\begin{align}\label{E:DIFFCOMMUTATOR}
[\der,\pa_j] = \sum_{\ell=0}^{a+|\beta|}\sum_{a'+|\beta'|=\ell  \atop a'\leq a+1} C^j_{a',\beta',\ell}\pa_r^{a'}\sn^{\beta'} 
\end{align}
for some universal coefficients $C^j_{a',\beta',\ell}$ which are smooth on the exterior of any ball around the origin $r=0$. Formula~\eqref{E:DIFFCOMMUTATOR} is a direct 
consequence of Lemma~\ref{L:COMMUTATORS}.

\section{Weighted spaces, Hardy inequalities, and Sobolev embeddings}\label{A:HARDY}

In this section, we recall Hardy inequalities and embedding results of weighted function spaces.  
First of all, we state the Hardy inequality near $r=1$. 

\begin{lemma} \label{hardy} (Hardy inequality \cite{KMP}) 
Let  $k $ be a real number and $g$ a function satisfying $\int_0^1  (1-r)^{k +2}(g^2 + g'^2) dr < \infty $.

If $k > -1$, then we have $ \int_0^1  (1-r)^{k}  g^2 dr  \leq C \int_0^1  (1-r)^{k+2} (g^2 + |g'|^2) dr   $. 

If $k < -1$, then $g$ has a trace at $r=1$  and 
$ \int_0^1  (1-r)^{k}  (g - g(1))^2 dr  \leq C \int_0^1  (1-r)^{k+2}   |g'|^2  dr$. 
 \end{lemma}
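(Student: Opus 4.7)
The plan is to prove both parts via integration by parts of $\int_0^1(1-r)^k g^2\,dr$ against the antiderivative $-(1-r)^{k+1}/(k+1)$ of $(1-r)^k$, followed by Cauchy--Schwarz with an absorption step. The sign of $k+1$ decides which endpoint boundary term survives, and this is precisely what splits the argument into the two cases.

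For $k>-1$, a single integration by parts gives
\begin{equation*}
\int_0^1(1-r)^k g^2\,dr=\frac{g(0)^2}{k+1}+\frac{2}{k+1}\int_0^1(1-r)^{k+1} gg'\,dr,
\end{equation*}
since $(1-r)^{k+1}\to 0$ at $r=1$. Cauchy--Schwarz with a small parameter bounds the interior term by $\tfrac12\int(1-r)^k g^2\,dr+C\int(1-r)^{k+2}g'^2\,dr$, which absorbs the first summand into the left side. The point value $g(0)^2$ is then controlled by the standard one--dimensional trace bound $g(0)^2\le C\int_0^{1/2}(g^2+g'^2)\,dr$ combined with the elementary observation that $(1-r)^{k+2}$ is comparable to $1$ on $[0,\tfrac12]$; this yields the first inequality. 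The analysis in this case is essentially textbook.

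For $k<-1$ the principal task is to make sense of $g(1)$ and to show that the boundary contribution at $r=1$ still vanishes after integrating by parts, since the weight $(1-r)^{k+1}$ now blows up there. The key observation is that $\int_0^1(1-s)^{-(k+2)}\,ds<\infty$ precisely when $k<-1$, so for $0\le r_1<r_2<1$ Cauchy--Schwarz applied to $g(r_2)-g(r_1)=\int_{r_1}^{r_2}g'(s)\,ds$ yields
\begin{equation*}
|g(r_2)-g(r_1)|^2\le\frac{(1-r_1)^{-(k+1)}-(1-r_2)^{-(k+1)}}{-(k+1)}\int_{r_1}^{r_2}(1-s)^{k+2}g'^2\,ds.
\end{equation*}
This shows that $g(r)$ is Cauchy as $r\to1$ and admits a trace $g(1)$; letting $r_2\to 1$ with $h:=g-g(1)$ further yields the pointwise decay $(1-r)^{k+1}h(r)^2\le\frac{1}{-(k+1)}\int_r^1(1-s)^{k+2}g'^2\,ds\to0$, which kills the boundary term at $r=1$ upon integration by parts.

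With these preliminaries secured, integrating $\int_0^1(1-r)^k h^2\,dr$ by parts gives
\begin{equation*}
\int_0^1(1-r)^k h^2\,dr+\frac{h(0)^2}{|k+1|}=-\frac{2}{|k+1|}\int_0^1(1-r)^{k+1}hh'\,dr,
\end{equation*}
where the boundary contribution at $r=0$ reappears with a favourable sign because $k+1<0$. Discarding $h(0)^2/|k+1|$ and applying Cauchy--Schwarz to the right side gives a quadratic inequality in $\bigl(\int(1-r)^k h^2\bigr)^{1/2}$, which upon division yields the sharp constant $C=4/(k+1)^2$ and the second inequality with $h'=g'$. The main technical obstacle is the trace/boundary--vanishing step for $k<-1$; once that is in place, both cases reduce to the same one--line weighted integration--by--parts argument with absorption.
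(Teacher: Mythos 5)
The paper does not prove this lemma; it simply cites \cite{KMP}, so there is no in-paper proof to compare against. Your argument is the standard one-dimensional weighted integration-by-parts proof, and it is correct in substance for both cases: for $k>-1$, integrate $(1-r)^k g^2$ by parts, absorb via Cauchy--Schwarz, and control $g(0)^2$ by the trace embedding $H^1(0,\tfrac12)\hookrightarrow L^\infty(0,\tfrac12)$ together with the fact that $(1-r)^{k+2}\asymp 1$ on $[0,\tfrac12]$; for $k<-1$, your trace argument via $\int_{r}^{1}(1-s)^{-(k+2)}\,ds<\infty$ and the resulting pointwise decay $(1-r)^{k+1}h(r)^2\to0$ is exactly the point that has to be checked, and you check it correctly, after which the integration-by-parts plus Cauchy--Schwarz and division yields the stated constant $4/(k+1)^2$.

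Two small remarks on rigor. In the $k>-1$ case you assert the $r=1$ boundary term vanishes because ``$(1-r)^{k+1}\to 0$ at $r=1$''; what is actually needed is that $(1-r)^{k+1}g(r)^2$ has a vanishing limit (or liminf), which does not follow from the hypothesis alone without an argument. The cleanest fix is to integrate by parts on $[0,1-\varepsilon]$, observe that the boundary contribution $-\tfrac{\varepsilon^{k+1}}{k+1}g(1-\varepsilon)^2$ has a favourable sign and can simply be dropped, perform the absorption there (where all quantities are manifestly finite), and then let $\varepsilon\to0$ by monotone convergence. The same truncation device also supplies the qualitative finiteness of $\int_0^1(1-r)^k h^2\,dr$ that is implicitly used when you divide by its square root in the $k<-1$ case. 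Neither point affects the validity of the approach; they are the routine technicalities that a complete write-up would include.
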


Since $ w$ depends only on $r$ and $ w$ behaves like a distance function $1-r$, using Lemma \ref{hardy}, we in particular get 
  \begin{equation}  \label{hard1}
 \int_{B_1({\bf 0})\setminus B_{\frac14}({\bf 0})} \psi  w^{k} | u  |^2 dy \,  
 \lesssim  \int_{B_1({\bf 0})\setminus B_{\frac14}({\bf 0})} \psi  w^{k+2} \left(  |\partial_r u  |^2       +    |u|^2 \right)    dy 
\end{equation} 
for any nonnegative real number $k> -1$ and for any $u\in C^\infty(B_1({\bf 0})\setminus B_{\frac14}({\bf 0}))$. We can apply \eqref{hard1} to $\der u$ and thereafter apply \eqref{hard1} repeatedly  
to the right-hand side. As a consequence  for any $-1< k < \alpha+a$,  
\begin{equation}\label{hard2}
\begin{split}
 \int_{B_1({\bf 0})\setminus B_{\frac14}({\bf 0})}  \psi  w^{k} | \der u  |^2 dy \,  
&  \lesssim  \int_{B_1({\bf 0})\setminus B_{\frac14}({\bf 0})} \psi  w^{k+2} \left(  |\partial_r\der u  |^2       +    |\der u|^2 \right)    dy  \\
&  \lesssim  \sum_{j=0}^m  \int_{B_1({\bf 0})\setminus B_{\frac14}({\bf 0})} \psi  w^{k+2j} |\partial_r^j\der u  |^2   dy. 
  \end{split}
\end{equation}
Upon choosing $m = \lceil a+\alpha-k \rceil$ where $\lceil \ \rceil$ is the ceiling function, we obtain 
\begin{equation}\label{hard3}
\|  \der u  \|^2_{k,\psi}  \lesssim  \sum_{j=0}^{\lceil a+ \alpha-k\rceil} \|  \partial_r^j\der u \|^2_{\alpha+a+j,\psi}
\end{equation}
for any $u\in C^\infty(B_1({\bf 0})\setminus B_{\frac14}({\bf 0}))$.

As a consequence of \eqref{hard3}, we obtain the weighted Sobolev-Hardy inequality: 

\begin{proposition} \label{P:EMBEDDING}
For any $u\in C^\infty(B_1({\bf 0}))$, we have 
\begin{align}
&\sup_{B_1({\bf 0})\setminus B_{\frac14}({\bf 0})}\left| w^{\frac{a_1}{2}} \pa_r^{a_1}\slashed\nabla^{\beta_1} u \right| \notag\\
& \lesssim \sum_{a+|\beta|\leq a_1+|\beta_1|+\lceil \alpha\rceil+6} \| \der u \|_{a+\alpha,\psi} 
+ \sum_{|\nu|\leq a_1+|\beta_1| +2 } \| \pa^\nu u\|_{\alpha,1-\psi} ,\label{embedding1} \\
&\sup_{B_1({\bf 0})\setminus B_{\frac14}({\bf 0})}\left| w^{\frac{a_1}{2}} D \pa_r^{a_1}\slashed\nabla^{\beta_1} u \right|\notag \\
& \lesssim \sum_{a+|\beta|\leq a_1+|\beta_1|+\lceil \alpha\rceil+6} \| \nabla_\eta \der u \|_{a+\alpha+1,\psi} 
+ \sum_{|\nu|\leq a_1+|\beta_1|+2} \|\nabla_\eta\pa^\nu u\|_{\alpha+1,1-\psi}. \label{embedding2}
\end{align}
 \end{proposition}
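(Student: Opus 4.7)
The plan is to split the annulus $B_1({\bf 0})\setminus B_{1/4}({\bf 0})$ into the interior shell $B_{3/4}({\bf 0})\setminus B_{1/4}({\bf 0})$, where $w$ is bounded below by a positive constant, and the near-boundary annulus $B_1({\bf 0})\setminus B_{3/4}({\bf 0})$, where $\psi\equiv 1$ and $w$ vanishes linearly at the boundary. Away from the origin the vector fields $\pa_r$ and $\sn_j$ are linear combinations of the Cartesian derivatives $\pa_i$ (and vice versa) with smooth, bounded coefficients, so the two contributions combine to control $w^{a_1/2}\pa_r^{a_1}\sn^{\beta_1}u$ pointwise on the full annulus.

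On the interior shell both $w$ and $1-\psi$ are bounded below on a sub-shell that carries the supremum, and $\pa_r^{a_1}\sn^{\beta_1}u$ is a linear combination of Cartesian derivatives $\pa^\nu u$ of order $|\nu|\le a_1+|\beta_1|$. The standard 3D Sobolev embedding $H^2\hookrightarrow L^\infty$ applied to $\pa^\nu u$ on this compact sub-shell then produces
\[
\sup_{B_{3/4}\setminus B_{1/4}} |w^{a_1/2}\pa_r^{a_1}\sn^{\beta_1}u| \lesssim \sum_{|\nu|\le a_1+|\beta_1|+2} \|\pa^\nu u\|_{\alpha,1-\psi},
\]
which is the Cartesian contribution in \eqref{embedding1}. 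For the near-boundary annulus, parametrize by $(r,\theta)\in[3/4,1)\times S^2$ and first apply the 2D Sobolev embedding $H^2(S^2)\hookrightarrow L^\infty(S^2)$ fibrewise:
\[
\|\pa_r^{a_1}\sn^{\beta_1}u(r,\cdot)\|_{L^\infty(S^2)}^2 \lesssim \sum_{|\beta'|\le 2}\|\pa_r^{a_1}\sn^{\beta_1+\beta'}u(r,\cdot)\|_{L^2(S^2)}^2,
\]
at the cost of $2$ angular derivatives. For each $\beta'$ then apply a weighted 1D Sobolev bound in $r$ to $G_{\beta'}(r):=\|\pa_r^{a_1}\sn^{\beta_1+\beta'}u(r,\cdot)\|_{L^2(S^2)}$: differentiating $w^{a_1/2}(r)G_{\beta'}(r)$, invoking the fundamental theorem of calculus anchored at $r_0\in[1/2,3/4]$ (where $w\gtrsim 1$), averaging over $r_0$, and Cauchy-Schwarz yields
\[
\sup_{r\in[3/4,1)} w^{a_1}(r)|G_{\beta'}(r)|^2 \lesssim \int_{1/2}^1 w^{a_1}|G_{\beta'}'|^2\,dr + \int_{1/2}^1 w^{a_1-2}|G_{\beta'}|^2\,dr,
\]
the latter weight arising from differentiating $w^{a_1/2}$. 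Reintegrating in $\theta$ restores the $L^2$ structure in the angular variable and produces the weighted $L^2$ bounds $\|\pa_r^{a_1+1}\sn^{\beta_1+\beta'}u\|_{a_1,\psi}^2 + \|\pa_r^{a_1}\sn^{\beta_1+\beta'}u\|_{a_1-2,\psi}^2$.

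The final ingredient is to match these weighted $L^2$ norms to the canonical weight $\alpha+a$ appearing on the right of \eqref{embedding1}. This is precisely what the Hardy-iteration bound \eqref{hard3} accomplishes: each iteration trades one extra radial derivative for a weight shift of $+2$, so $\lceil\alpha+2\rceil$ iterations lift $w^{a_1-2}$ to $w^{\alpha+a_1}$. Adding the derivative counts from the angular Sobolev ($+2$), the radial Sobolev ($+1$), and the Hardy shift ($+\lceil\alpha\rceil+2$), the total order is bounded by $a_1+|\beta_1|+\lceil\alpha\rceil+5$, safely within the $+6$ slack of \eqref{embedding1}. The gradient bound \eqref{embedding2} follows by running the same argument with $Du$ in place of $u$; the weight shift $+1$ in $\|\nabla_\eta\der u\|_{a+\alpha+1,\psi}$ absorbs the extra Cartesian derivative, and $\nabla_\eta$ is comparable to $D$ in $W^{1,\infty}$ under the a-priori assumption \eqref{E:APRIORI}.

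The main technical obstacle will be ensuring that every Hardy application is legitimate, i.e., that the running weight exponent stays strictly greater than $-1$; this forces the iteration of \eqref{hard3} rather than a single Hardy step and pins the $\alpha$-dependent count. The threshold cases $a_1=0$ (where the singular weight $w^{a_1-2}$ never appears since $w^{a_1/2}\equiv 1$) and $a_1=1$ (borderline $k=-1$, handled by a small weight perturbation) need minor separate treatment but do not alter the final count; the constant $+6$ is chosen for convenience rather than sharpness.
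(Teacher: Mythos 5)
Your plan — splitting the annulus into an interior shell where $w$ is bounded below (handled by $H^2\hookrightarrow L^\infty$) and a near-boundary collar where one uses a fibrewise $H^2(S^2)\hookrightarrow L^\infty(S^2)$ embedding, a one-dimensional weighted radial estimate, and then iterated Hardy via \eqref{hard3} — is exactly the skeleton the paper has in mind when it says the proposition ``follows from standard estimates relying on the $H^2(\Omega)\hookrightarrow L^\infty(\Omega)$ continuous embedding and the Hardy inequality~\eqref{hard3}.'' Your derivative bookkeeping ($+2$ angular, $+1$ radial, $+\lceil\alpha\rceil+\mathrm{O}(1)$ from Hardy) lands comfortably within the $+6$ slack, so the quantitative side is fine.

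Two points deserve a cleaner treatment. First, the borderline cases $a_1\le 1$ are an artifact of squaring $\partial_r(w^{a_1/2}G)$ before integrating: if you instead write $w^{a_1}|G|^2(r) = w^{a_1}|G|^2(r_0) + \int_{r_0}^r\partial_s(w^{a_1}|G|^2)\,ds$ and estimate the integrand as $a_1 w^{a_1-1}|w'|\,|G|^2 + 2w^{a_1}|G||G'|$, the lower-order weight is $w^{a_1-1}$ rather than $w^{a_1-2}$, so Hardy's hypothesis $k>-1$ holds for all $a_1\ge 1$ and the $a_1=0$ case is trivially weight-free; no ``small weight perturbation'' is needed. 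Second, and more substantively, the derivation of \eqref{embedding2} cannot literally be ``the same argument with $Du$ in place of $u$'': running \eqref{embedding1} on $\partial_i u$ produces norms $\|\der\partial_i u\|_{a+\alpha,\psi}$, whose weight $w^{a+\alpha}$ is one power \emph{smaller} than the target weight $w^{a+\alpha+1}$ in $\|\nabla_\eta\der u\|_{a+\alpha+1,\psi}$, so the inequality you obtain is strictly stronger on the right-hand side and does not imply \eqref{embedding2}. The correct statement is that the whole Sobolev/Hardy scheme should be rerun with the pattern $\alpha\mapsto\alpha+1$ (i.e.\ Hardy is iterated until the running weight reaches $\alpha+a+1+j$ rather than $\alpha+a+j$), which costs exactly one extra Hardy step and hence one extra derivative; combined with the fact that $\partial_r\der u$ and $\sn_j\der u$ are components of $\nabla_\eta\der u$ modulo $\A$ and lower-order commutators, this yields \eqref{embedding2} with total count still within $+6$. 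The phrase ``the weight shift $+1$ absorbs the extra Cartesian derivative'' gestures at this but is not itself an argument; as written it is a gap, albeit one that is routine to fill with the machinery you already laid out.
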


We omit the technical details of the proof, as it follows from standard estimates relying on the $H^2(\Omega)\hookrightarrow L^\infty(\Omega)$ continuous embedding and the Hardy inequality~\eqref{hard3}.

\section{Starting from Lagrangian formulation}\label{A:LAG}

Consider the Lagrangian formulation of the E$_\gamma$-system~\cite{CoSh2012, JaMa2015}: 
\be\label{euler_lag}
\zeta_{tt} + \tfrac{\gamma}{\gamma-1}\A_\zeta^T \nabla (w \J_\zeta^{1-\gamma})=0
\ee
where $\A_\zeta$ and $\J_\zeta$ are induced by the flow map $\pa_t\zeta(t,y)={\bf{u}}(t,\zeta(t,y))$ (${\bf u}$ is the original fluid velocity appearing in \eqref{E:EULER}) and $w=(\rho_0 \J_\zeta(0))^{\gamma-1}\geq 0$ is an enthalpy function that depends only on the initial data.   
To find affine motions one makes the ansatz $ \zeta (t,y)= A(t) y$~\cite{Sideris}. With $\A_{\zeta}^\top=[D\zeta]^{-\top}= A(t)^{-\top}$ and $\J_{\zeta}=\det A$, by plugging this ansatz into \eqref{euler_lag}, we obtain 
\be\label{eq}
A_{tt} y + \tfrac{\gamma}{\gamma-1}  (\det A)^{1-\gamma} A^{-\top} \nabla w=0
\ee
Since $w$ is independent of $t$, \eqref{eq} will be satisfied if we demand 
\be\label{eqA}
A_{tt}= \delta  (\det A)^{1-\gamma} A^{-\top} , \quad \delta y = -  \tfrac{\gamma}{\gamma-1}  \nabla w, \quad \delta >0,  
\ee
which precisely yields the affine motions~\eqref{E:RHOAUA} discovered in~\cite{Sideris}, which form a set $\mathscr S$. 

In order to study the stability of elements of $\mathscr S$ we want to realize them as time-independent background solutions. 
Given such an affine motion $ A$ we modify the flow map $\zeta$ and define $\eta:=  A^{-1} \zeta$. Since $\A_\zeta^\top =  A^{-\top} \A_\eta^\top$ 
and $\J_\zeta= (\det A) \J_\eta$, we obtain 
\[
 \eta_{tt} + 2 A^{-1}  A_t \eta_t + A^{-1}  A_{tt}\eta + \tfrac{\gamma}{\gamma-1}  (\det A)^{1-\gamma} A^{-1} A^{-\top} \A_\eta^\top
\nabla (w \J_\eta^{1-\gamma}) =0
\]
By using \eqref{eqA} we can rewrite the previous equation in the form
\be\label{zetat}
 (\det A)^{\gamma-\frac13} \eta_{tt} + 2 (\det A)^{\gamma-\frac13}  A^{-1} A_t \eta_t +\delta \Lambda \eta + \tfrac{\gamma}{\gamma-1} \Lambda  \A_\eta^\top
\nabla (w \J_\eta^{1-\gamma}) =0
\ee
where $\Lambda =\det A^{\frac23}  A^{-1}A^{-T}$. 

We rescale the time variable $t$ so that $1+t\sim e^{\mu_1\tau}$ by setting $\frac{d\tau}{d t} = (\det A)^{-\frac13}$. Then \eqref{zetat} can be written as 
\be\label{zetatau}
 (\det A)^{\gamma-1} \eta_{\tau\tau}  -\tfrac13 (\det  A)^{\gamma-2} (\det A)_\tau \eta_\tau + 2 (\det A)^{\gamma-1} A^{-1}  A_\tau \eta_\tau   +\delta \Lambda \eta + \tfrac{\gamma}{\gamma-1} \Lambda  \A_\eta^\top
\nabla (w \J_\eta^{1-\gamma}) =0
\ee
We now recall $A^{-1}  A_\tau = \mu^{-1} \mu_\tau I + O^{-1}O_\tau$ where $ A= \mu O$ and $\mu= (\det  A)^{\frac 13}$ (see Section~\ref{S:SS}). The equation for $\eta$ reads
\be\label{zetatau2}
 \mu^{3\gamma-3}\eta_{\tau\tau} + \mu^{3\gamma -4}\mu_\tau \eta_\tau + 2 \mu^{3\gamma-3}\Gamma^\ast \eta_\tau  +\delta \Lambda \eta + \tfrac{\gamma}{\gamma-1} \Lambda  \A_\eta^\top
\nabla (w \J_\eta^{1-\gamma}) =0
\ee
where $\Gamma^\ast=O^{-1}O_\tau$.  It is clear that $\eta(y)\equiv y$ corresponds to Sideris' affine motions, and  equation \eqref{zetatau2} is nothing but~\eqref{E:VELOCITYLAGR2}. By considering $\uptheta=\eta -y$, we obtain the $\uptheta$-equation~\eqref{E:THETAEQUATION}.

\end{document}